\theoremstyle{plain}
\title{Towers and elementary embeddings in toral relatively hyperbolic groups}
\author{Christopher Perez}
\begin{document}
\maketitle

\begin{abstract}
In a remarkable series of papers, Zlil Sela classified the first-order theories of free groups and torsion-free hyperbolic groups using geometric structures he called towers. It was later proved by Chlo\'e Perin that if $H$ is an elementarily embedded subgroup (or elementary submodel) of a torsion-free hyperbolic group $G$, then $G$ is a tower over $H$. We prove a generalization of Perin's result to toral relatively hyperbolic groups using JSJ and shortening techniques.
\end{abstract}


\section{Introduction}

Tarski's problem concerns the \emph{elementary theory} $\Th(G)$ of a group $G$, the set of all first-order sentences in the language of groups which are valid over $G$. The problem asks if all finitely generated, non-abelian free groups have the same elementary theory. This question was answered in the affirmative in 2006 by Zlil Sela \cite{Sela06b}, and independently by Olga Kharlampovich and Alexei Myasnikov \cite{KM06}. Sela went on to generalize the techniques he used to solve Tarski's problem and prove the following:
\begin{thm}[{\cite[Theorem 7.10]{Sela09}}]
Let $\Gamma$ be a torsion-free hyperbolic group, and let $G$ be a finitely generated group. If $\Th(G)=\Th(\Gamma)$, then $G$ is a hyperbolic group.
\end{thm}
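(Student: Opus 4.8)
The plan is to deduce hyperbolicity of $G$ not directly, but from a structural consequence of $\Th(G)=\Th(\Gamma)$---that $G$ is a hyperbolic tower over a torsion-free hyperbolic base---and then to invoke combination theorems. So the first step I would take is to fix the target structure: recall (a theorem of Sela, later streamlined by Perin and Sklinos) that $\Gamma$ is itself a hyperbolic tower over its \emph{elementary core} $EC(\Gamma)$, a torsion-free hyperbolic group obtained by iteratively retracting $\Gamma$ onto the complement of the foldable surface-type vertex groups of its cyclic JSJ decomposition; the goal is then to show that $G$ is likewise a hyperbolic tower over $EC(\Gamma)$.

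Next I would extract the limit-group structure of $G$. Since $G$ is finitely generated and every sentence true in $\Gamma$ holds in $G$, in particular the residual universal sentences of $\Gamma$ hold in $G$, so $G$ embeds in an ultrapower of $\Gamma$; concretely there are homomorphisms $\varphi_n\colon G\to\Gamma$ with $\bigcap_n\ker\varphi_n=1$. Hence $G$ is a $\Gamma$-limit group: it is torsion-free, the freely indecomposable factors of its Grushko decomposition carry canonical cyclic (and abelian) JSJ decompositions, $G$ occurs in the Makanin--Razborov diagram of $\Gamma$, and---the real point---the shortening argument is available for the stable $\mathbb{R}$-tree actions of $G$ arising from the $\varphi_n$.

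The heart of the matter is to upgrade \emph{$\Gamma$-limit group} to \emph{hyperbolic tower over $EC(\Gamma)$}: the limit-group structure alone does not suffice, since there are finitely generated $\Gamma$-limit groups containing $\mathbb{Z}^2$ and hence non-hyperbolic, and this is exactly where the full first-order theory---not just the universal theory---must enter. Here I would run Sela's formal-solutions machinery: choose a strict resolution of $G$ inside the $\Gamma$-Makanin--Razborov diagram, pass to its completion $T$ (a tower), and use the $\forall\exists$ sentences of $\Th(\Gamma)=\Th(G)$ that validate the existence of formal solutions to build a retraction $r\colon T\to G$ respecting the tower structure. Then, pushing \emph{test sequences} through $T$, I would argue that any abelian vertex group of $T$ of higher rank than is forced by $\Gamma$, or any surface vertex group of $T$ that fails to degenerate under the test sequence, yields a first-order sentence true in exactly one of $G$ and $\Gamma$---a contradiction. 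Thus $G$ is itself a hyperbolic tower, and comparing the bases of the two towers (both $\Gamma$ and $G$ are towers over a common piece, and the elementary core is rigid) identifies that base with $EC(\Gamma)$. Hyperbolicity is then formal: each elementary step of a hyperbolic tower---free product with a finitely generated free group, gluing a compact surface with boundary along maximal cyclic peripheral subgroups, or adding a handle---preserves torsion-free hyperbolicity by the Bestvina--Feighn combination theorem, so since $EC(\Gamma)$ is torsion-free hyperbolic, so is $G$.

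I expect the main obstacle to be precisely that middle step: showing the first-order theory is sensitive enough to see the internal geometry of the completion $T$, i.e.\ that a superfluous abelian flat, or a surface that fails to close up, genuinely registers in some sentence. This is the substance of Sela's Diophantine-geometry analysis---quantifier elimination pushed down to the level of the Makanin--Razborov and formal-solution diagrams---together with the delicate bookkeeping needed to make the retraction $r$ compatible with the tower structure, so that $G$ is honestly a hyperbolic tower over $EC(\Gamma)$ and not merely an abstract retract of one.
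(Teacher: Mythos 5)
The paper does not prove this theorem; it is stated in the introduction purely as a citation to Sela's \emph{Diophantine geometry over groups VII} ({\cite[Theorem 7.10]{Sela09}}), motivating the line of work the present paper belongs to. So there is no in-paper argument against which your proposal can be checked.

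Taken on its own merits as a reconstruction of Sela's proof, your outline is architecturally sound and hits the right landmarks: use $\Th(G)=\Th(\Gamma)$ to see $G$ as a finitely generated subgroup of an ultrapower of $\Gamma$ and hence a $\Gamma$-limit group with a discriminating sequence $\varphi_n\colon G\to\Gamma$; run the Makanin--Razborov / formal-solution machinery and test sequences to promote the limit-group resolution to a genuine hyperbolic tower structure; match the base of that tower with the elementary core $EC(\Gamma)$; and finish by observing that each floor of a hyperbolic tower (free product with $\mathbb Z$, or gluing a surface with boundary along maximal cyclic peripheral subgroups) preserves torsion-free hyperbolicity via Bestvina--Feighn combination, so a hyperbolic tower over a hyperbolic group is hyperbolic. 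Two small cautions. First, the phrase \emph{residual universal sentences} is nonstandard; what you want is that $\Th_\forall(\Gamma)\subseteq\Th(G)$, equivalently $\Th_\exists(G)\subseteq\Th_\exists(\Gamma)$, which (together with finite generation of $G$ and equational Noetherianity of $\Gamma$) gives the embedding into an ultrapower and the discriminating sequence. Second, the middle step you flag as the heart of the matter genuinely is: it is not just a matter of detecting a superfluous $\mathbb Z^2$ or a non-degenerating surface by a single sentence, but of controlling the entire completion of a well-chosen resolution via $\forall\exists$-sentences and quantifier elimination, and this is where the bulk of Sela's technical apparatus lives. As a sketch, though, what you wrote is a faithful rendering of the shape of that argument, and it is correct that the present paper simply imports the conclusion rather than reproducing the proof.
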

This result was later generalized by Simon Andr\'e to apply to hyperbolic groups with torsion, as well, making hyperbolicity a first-order invariant among finitely generated groups \cite{Andre18}.

A concept closely related to elementary equivalence is that of elementary embeddings: Let $H$ be a subgroup of a group $G$. The inclusion of $H$ into $G$ is an \emph{elementary embedding}, denoted $H\elemb G$, if for any first-order formula $\ph(x_1,\ldots,x_n)$ and any $(h_1,\ldots,h_n)\in H^n$ with $n\geq 0$,
\[H\models\ph(h_1,\ldots,h_n)\iff G\models\ph(h_1,\ldots,h_n),\]
where $\ph$ is a first-order sentence when $n=0$ and $G\models\ph(g_1,\ldots,g_n)$ denotes that $G$ \emph{models} $\ph(x_1,\ldots,x_n)$ with the assignment $x_i=g_i$, i.e., $\ph(g_1,\ldots,g_n)$ is true over $G$. In particular, $\Th(H)=\Th(G)$ if $H$ is elementarily embedded.

\emph{Towers} first appeared in Sela's work, and roughly speaking, towers are built from \emph{floors} in which a group retracts onto its \emph{base} in a nice way, and one can find these retractions if there exists a \emph{preretraction}. In this paper we using a version of towers and floors due to Vincent Guirardel, Gilbert Levitt, and Rizos Sklinos which depend \emph{centered} and \emph{retractable splittings} \cite{GLS18}. Towers, splittings, and preretractions are discussed in more detail in \S\ref{tap}.

\begin{figure}[t]\label{tower}
\begin{center}
\begin{subfigure}[t]{0.46\textwidth}\begin{center}
\includegraphics[width=\textwidth]{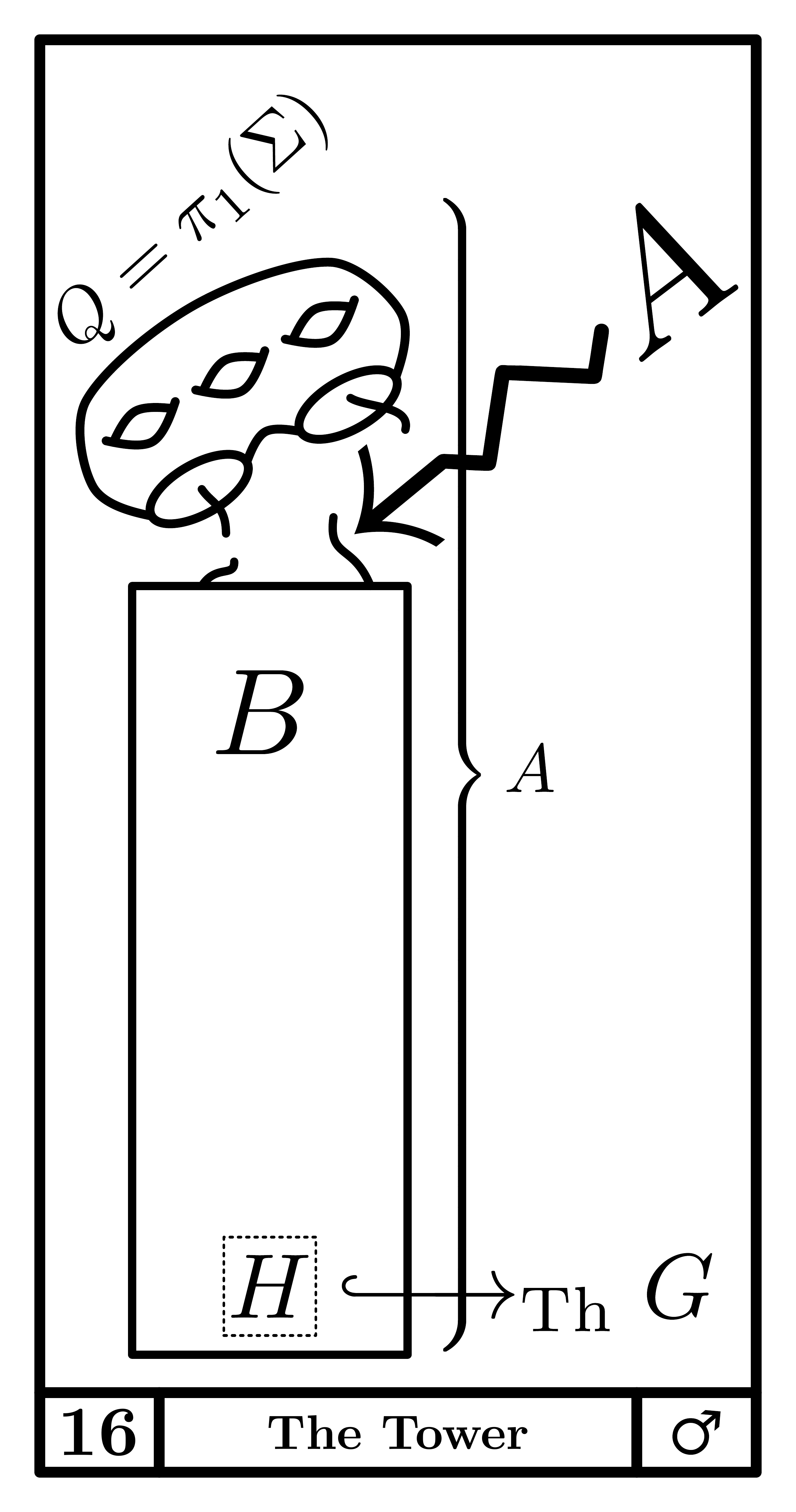}
\caption{A preretraction $A\sqto A$ ``breaks'' the ``top floor'' $Q$ off of $A$, leaving the base $B$ onto which $A$ retracts.}
\end{center}\end{subfigure}
\hfill
\begin{subfigure}[t]{0.46\textwidth}\begin{center}
\includegraphics[width=\textwidth]{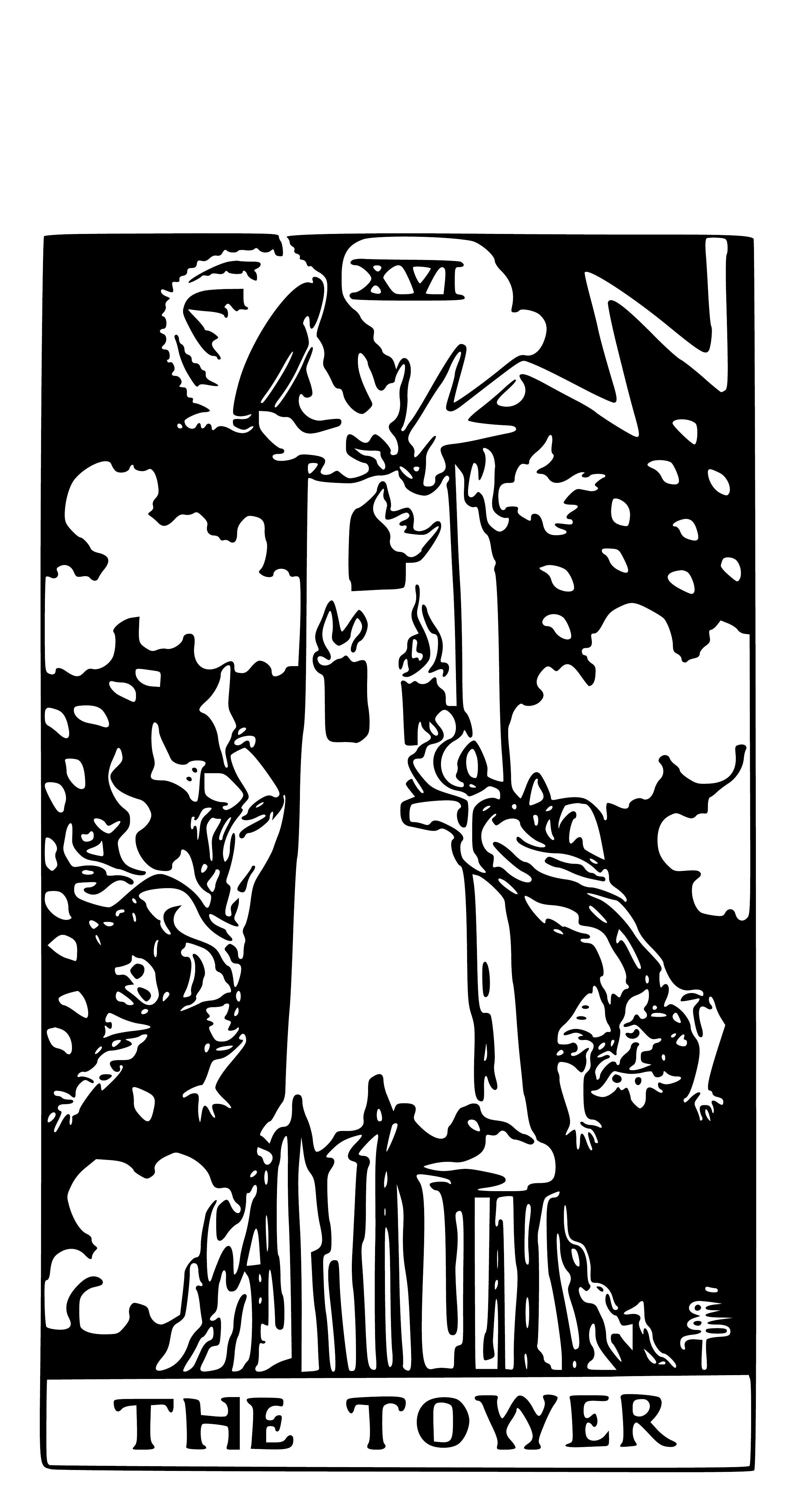}
\caption{\emph{The Tower} by Pamela Colman Smith, from the \emph{Rider-Waite} tarot deck (public domain).}
\end{center}\end{subfigure}

\caption{Towers, floors, and preretractions.}
\end{center}
\end{figure}

Building upon the work of Sela and others, Chlo\'e Perin proved the following:

\begin{thm}[{\cite[Theorem 1.2]{Perin11}}]
If $G$ is a torsion-free hyperbolic group and $H\elemb G$ is an elementary embedding, then $G$ is a tower over $H$.
\end{thm}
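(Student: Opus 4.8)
The plan is to argue by induction on a complexity measure of the pair $(G,H)$, in the style of Sela and Perin --- built from the Grushko decomposition of $G$ relative to $H$ and from the cyclic JSJ decomposition of $G$ relative to $H$ --- peeling off one floor of the tower at each step; the base case $G=H$ is trivially a tower over itself. Before the induction proper I would make two reductions. First, since $G$ is finitely generated hyperbolic and $H\elemb G$, the subgroup $H$ is finitely generated, in fact quasiconvex, by the work of Perin and Sklinos on elementary subgroups of hyperbolic groups; so I may fix a finite tuple $\bar h$ generating $H$, a finite tuple $\bar t$ with $\langle\bar h,\bar t\rangle=G$, and a finite presentation $G=\langle\bar h,\bar t\mid R(\bar h,\bar t)\rangle$. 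Second, if $G$ is freely decomposable relative to $H$, write $G=G_1\ast\cdots\ast G_k\ast F$ with $H\le G_1$, each $G_i$ freely indecomposable relative to $H$, and $F$ free; using known results on elementary embeddings and free products, one shows the factors $G_2,\ldots,G_k$ are themselves free, so that $G$ is a free-product floor over $G_1$, where $H\elemb G_1$ and the complexity is strictly smaller. Thus it suffices to treat the case where $G$ is freely indecomposable relative to $H$.

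So assume $G$ is freely indecomposable relative to $H$ and let $\Lambda$ be its cyclic JSJ decomposition relative to $H$, in which $H$ is elliptic. The heart of the argument --- and the step I expect to be the main obstacle --- is to manufacture, from the hypothesis $H\elemb G$, a \emph{preretraction} $G\to H$ with respect to $\Lambda$: a homomorphism restricting to the identity on $H$, injective on the rigid vertex groups, and non-degenerate on the surface-type and abelian-type vertex groups. One half is cheap: the identity of $G$ witnesses the existential sentence $\exists\bar t\,\big(R(\bar h,\bar t)=1\big)$, so by elementarity $H$ satisfies it as well, and any witnessing tuple $\bar t'\in H$ defines a retraction $\rho\colon G\to H$ by $\bar h\mapsto\bar h$ and $\bar t\mapsto\bar t'$. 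Upgrading $\rho$ to a preretraction is the hard part: one encodes Sela's shortening argument as a $\forall\exists$ sentence --- roughly, ``every solution over $\bar h$ of the equations $R=1$ is carried to a short solution by an element of the modular group of $\Lambda$'' --- which holds in $H$, hence in $G$, and then runs the Bestvina--Paulin construction together with the Rips machine on the resulting sequence of homomorphisms $G\to H$, verifying that its limiting action on an $\mathbb R$-tree cannot degenerate (it cannot kill a surface-type vertex group or collapse a rigid vertex group) without contradicting the minimality of the JSJ $\Lambda$. In the toral relatively hyperbolic generalization this is where the real extra work lies: the peripheral abelian subgroups may have rank greater than one, so one must use the abelian JSJ in place of the cyclic one, keep tight control of the arc stabilizers of the limit tree, and accommodate a larger modular group.

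Granting the preretraction, the remainder is structural. If $\Lambda$ is trivial, so that $G$ is rigid relative to $H$, one instead argues directly that the retraction $\rho\colon G\to H$ must be injective --- rigidity of $G$ relative to $H$ rules out a nontrivial kernel --- so that $\rho$ is an isomorphism fixing $H$ pointwise and $G=H$, terminating the induction. Otherwise, a non-degenerate preretraction with respect to $\Lambda$ exhibits $G$ as a floor over a proper subgroup $G'$ containing $H$, by Sela's preretraction-to-floor theorem in the Guirardel--Levitt--Sklinos formulation via centered and retractable splittings \cite{GLS18}. Since $G$ is a floor over $G'$, its base is elementarily embedded: $G'\elemb G$ \cite{Sela09}; combined with $H\elemb G$ and $H\le G'$, this gives $H\elemb G'$. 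Because passing to the base of a floor strictly lowers the complexity measure, the induction hypothesis applies to $(G',H)$, so $G'$ is a tower over $H$; hence $G$, being a floor over $G'$, is a tower over $H$ as well.
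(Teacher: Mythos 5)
Your inductive skeleton --- peel off floors via preretractions encoded as $\forall\exists$ sentences pulled back through the elementary embedding --- is the right strategy and is essentially what both Perin and this paper do. However, there is a genuine gap in your reduction to the freely indecomposable case. You claim that in the Grushko decomposition $G = G_1 * \cdots * G_k * F$ relative to $H$ (with $H \leq G_1$), ``one shows the factors $G_2,\ldots,G_k$ are themselves free,'' so that $G$ is a free-product floor over $G_1$. This is false. Take $G = F_2 * \pi_1(\Sigma)$ where $\Sigma$ is a closed orientable surface of genus at least $2$ and $H = F_2$; then $G$ is a tower over $H$ (hence $H \elemb G$), yet the factor $\pi_1(\Sigma)$ is not free. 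In general the factors not containing $H$ are only guaranteed to be freely indecomposable torsion-free hyperbolic groups which turn out to be towers over the trivial group, and establishing that they are requires its own preretraction argument. That is exactly the content of Perin's Proposition 5.14, or Proposition \ref{global1} in this paper: for a factor $C$ that is neither cyclic nor a non-exceptional surface group, one produces a non-injective preretraction $C \sqto G$ with respect to the JSJ splitting of $C$, making $C$ a floor over a proper retract, and iterates until only cyclic and surface pieces remain. Your proposal silently assumes this work away.

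A secondary, non-fatal point: you invoke Sela's theorem that the base of a hyperbolic floor is elementarily embedded in order to pass from $H \elemb G$ to $H \elemb G'$ and then descend the induction to the pair $(G', H)$. Perin and this paper instead keep the ambient group $G$ fixed throughout: the preretractions constructed in Propositions \ref{global} and \ref{global1} are maps $A^m \sqto G$ valued in $G$, not in the current floor, and they use only $H \elemb G$ together with the fact that each stage is a retract of $G$ containing $H$. This avoids citing Sela's substantial result that floor bases are elementary, and it makes termination precise via the descending chain condition for $G$-limit groups (rather than an unspecified ``complexity measure''). Your route is viable if you accept Sela's theorem as input, but you should make that dependence explicit and replace the vague complexity with something well founded, as the shortening-quotient machinery provides.
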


In this paper we prove a generalization of this result to \emph{toral relatively hyperbolic groups}, which are torsion-free groups hyperbolic relative to maximal abelian subgroups. For more on toral relatively hyperbolic groups, see \cite{Groves05,Groves09}.

\begin{dfn}
Let $G=\lang S\rang$ be a finitely generated group (with $S$ finite), let $\cp=\{P_1,\ldots,P_n\}$ be a collection of finitely generated subgroups of $G$, and let $X$ be the Cayley graph of $G$ with respect to $S$. We construct the \emph{coned-off Cayley graph} $\tilde{X}$ by joining a unique cone point for each distinct left coset of an element of $\cp$ to each vertex of that coset in $X$, i.e.,
\[V(\tilde{X})=G\cup\{c_{gP}:gP\text{ is a coset with }g\in G,P\in\cp\},\]
\[E(\tilde{X})=E(X)\cup\{(h,c_{gP}):h\in gP\text{ with }g\in G,P\in\cp\}.\]
\end{dfn}

\begin{dfn}
A finitely generated group $G$ is \emph{hyperbolic} if its Cayley graph is $\delta$-hyperbolic.

A finitely generated group $G$ is \emph{hyperbolic relative to} $\cp=\{P_1,\ldots,P_n\}$, a collection of finitely generated subgroups, if the coned-off Cayley graph $\tilde{X}$ is $\delta$-hyperbolic and, for each $e\in E(\tilde{X})$ and $n\in\bbn$, there are only finitely many embedded loops of length $n$ containing $e$.

A \emph{toral relatively hyperbolic group} is a torsion-free group which is hyperbolic relative to the conjugacy representatives of its maximal non-cyclic abelian subgroups. In particular, it follows that all elements of the set $\cp$ are non-cyclic.
\end{dfn}

\begin{restatable*}[]{mthm}{mainthms}
If $G$ is a toral relatively hyperbolic group and $H\elemb G$ is an elementary embedding, then $G$ is a tower over $H$.
\end{restatable*}

\subsection*{Acknowledgements}
I would like to thank my adviser, Daniel Groves, as well as Zlil Sela, Chlo\'e Perin, Vincent Guirardel, and Gilbert Levitt for all of their assistance in writing this paper and helping me to understand many of the concepts and techniques used here. 

I would like to especially thank Vincent and Gilbert for providing a draft of their forthcoming paper with Rizos Sklinos, \cite{GLS18}, which streamlines and clarifies the process of constructing floors and towers from preretractions. In \S 2--3, 4.2, 5.3 they only require their groups to be finitely generated, torsion-free, and possibly CSA, so their results can be cited directly and applied to toral relatively hyperbolic groups. As their paper is currently in preparation we have provided statements of relevant results for completeness, but much of what is cited from their draft is based on techniques used in \cite{Perin11,GLS17}.

I would also like to acknowledge the support of the Mellon-Mays Foundation, which funded the travel that enabled me to meet with Zlil, Chlo\'e, Gilbert, and Vincent.

Finally, I would like to thank my partner Stephanie Reyes for her constant support and encouragement. Without her suggestion that I just start typing up my ideas, this work would still be a mess of disorganized notes strewn across several notebooks and stacks of loose papers.

\subsection{Towers, splittings, and preretractions}\label{tap}

For more information on JSJ trees and splittings, see \cite{GL17}, and for more information on towers, preretractions, and their associated splittings, see \cite{GLS18}.

\begin{dfn}
A group $G$ is \emph{freely indecomposable relative to a subgroup $H$} if it does not admit a non-trivial free product decomposition $G=A*B$ with $H\leq A$.
\end{dfn}

\begin{dfn}
Let $G$ be a torsion-free group acting on a simplicial tree $T$ without edge inversion. This action is \emph{$k$-acylindrical} if the pointwise stabilizer of each arc of length $\geq k+1$ is trivial. Given a vertex $v\in V(T)$, this action is \emph{1-acylindrical at $v$} if, for any pair of distinct edges meeting at $v$, all conjugates of the corresponding edge stabilizers intersect trivially.
\end{dfn}

\begin{dfn}
Let $\Lambda$ be a splitting of a group $G$. A vertex $v\in V(\Lambda)$ is a \emph{surface-type vertex} (and similarly the vertex group $G_v$ is a \emph{surface-type vertex group}) if there exists a compact surface $\Sigma$ such that $G_v\cong\pi_1(\Sigma)$ and there is a bijective correspondence between the boundary components $\der\Sigma=C_1\amalg\cdots\amalg C_n$ of $\Sigma$ and the edges $e_1,\ldots,e_n$ incident to $v$ so that $G_{e_i}=\pi_1(C_i)\cong\bbz$ for all $i$.
\end{dfn}

\begin{dfn}
There are four classes of surfaces $\Sigma$ with $\chi(\Sigma)=-1$ which do not carry pseudo-Anosov diffeomorphisms:
\begin{itemize}
\item
pairs of pants;
\item
once-punctured Klein bottles;
\item
twice-punctured projective planes; and
\item
non-orientable closed surfaces of genus 3.
\end{itemize}
We refer to such surfaces and surface-type vertices in splittings which carry the fundamental groups of these surfaces as \emph{exceptional}. All other hyperbolic surfaces (and the associated surface-type vertices) are \emph{non-exceptional}. Because all of the surfaces we will consider have negative Euler characteristic, non-exceptional surfaces will admit pseudo-Anosov diffeomorphisms. Furthermore, non-exceptional surfaces all have $\chi(\Sigma)\leq -2$.
\end{dfn}

\begin{dfn}
A splitting $\Lambda$ of a finitely generated, torsion-free group $G$ is \emph{JSJ-like} if
\begin{itemize}
	\item
		edge groups are abelian;
	\item
		at most one vertex adjacent a given edge is a surface-type vertex, and at most one is an abelian vertex;
	\item
		the action of $G$ on the corresponding Bass-Serre tree $T$ is 2-acylindrical; and
	\item
		the surfaces of $\Lambda$ are punctured tori or are non-exceptional.
	\end{itemize}
Vertex groups which are neither abelian nor surface-type are called \emph{rigid}.

Note that this definition differs from those used in \cite{GLS18,Perin11} by generalizing to abelian splittings rather than just cyclic splittings.
\end{dfn}

\begin{dfn}\label{jsj}
Let $G$ be a non-abelian toral relatively hyperbolic group and let $H\leq G$ a subgroup. Let $\cp$ be a complete set of conjugacy representatives for the maximal non-cyclic abelian subgroups of $G$, let $\cb$ be the collection of subgroups of $G$ which are conjugate to either a subgroup in $\cp$ or a virtually cyclic subgroup, and let $\ch=\{H\}\cup\cp$. If $G$ is freely indecomposable relative to $H$, then by \cite[Corollary 9.19]{GL17} there exists the JSJ tree $T$ over $\cb$ relative to $\ch$ which is equal to its collapsed tree of cylinders, is invariant under automorphisms of $G$ which fix $\ch$, and is compatible with every $(\cb,\ch)$-tree. For more on the construction of the JSJ tree and the collapsed tree of cylinders, see \cite{GL17}.

From the construction of $T$ as a collapsed tree of cylinders we also have that $\Lambda:=T/G$ is bipartite with every edge carrying one abelian vertex and one non-abelian vertex, and that the action on $T$ is 1-acylindrical near vertices with non-abelian stabilizer. In particular the JSJ splitting $\Lambda$ is JSJ-like. We will refer to this splitting as the \emph{JSJ splitting of $G$ relative to $H$} (or simply the \emph{JSJ splitting of $G$} if $H=\{1\}$.
\end{dfn}

\begin{lem}[[{Compare \cite[Lemma 2.4]{GLS18}}]\label{lem24}
Let $A$ and $G$ be finitely generated torsion-free groups acting on trees $T_A$ and $T_G$, respectively, with abelian edge stabilizers. Assume that both trees are bipartite with verties of types 0 and 1, and that the actions are 1-acylindrical near vertices of type 1.

Let $f:A\to G$ be a homomorphism such that each type 0 vertex stabilizer subgroup of $A$ maps injectively into a type 0 vertex stabilizer of $G$, and that each type 1 vertex stabilizer of $A$ maps bijectively to a type 1 vertex stabilizer of $G$. If $f$ is not injective, then there exist two non-conjugate type 1 vertex stabilizer subgroups of $A$ with the same image under $f$.
\end{lem}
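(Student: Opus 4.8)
The plan is to assume $f$ is not injective and derive a contradiction by studying the action on $T_A$ of a carefully chosen element of $\ker f$. First I would observe that no nontrivial element of $\ker f$ can fix a vertex of $T_A$: if $a$ fixes a vertex $v$, then $a$ lies in the vertex stabilizer $A_v$, and $f$ is injective on $A_v$ whether $v$ has type $0$ (by the injectivity hypothesis) or type $1$ (by the bijectivity hypothesis), contradicting $a\in\ker f\setminus\{1\}$. Since $A$ is torsion-free and the bipartite structure rules out inversions, every element is elliptic or hyperbolic, so every nontrivial element of $\ker f$ is hyperbolic in $T_A$. I would then pick $1\neq a\in\ker f$ of minimal translation length $\ell:=\ell_{T_A}(a)$ and let $L$ be its axis. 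Because $T_A$ is bipartite, $L$ alternates between vertices of types $0$ and $1$ and $\ell=2m$ is even with $m\geq 1$; write a $\langle a\rangle$-fundamental segment of $L$ as $y_0,z_0,y_1,z_1,\dots,y_{m-1},z_{m-1},y_m$, with each $y_i$ of type $1$, each $z_i$ of type $0$, and $y_m=a\,y_0$.

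Next I would push $L$ forward into $T_G$. For each type $1$ vertex $y$ of $T_A$, $f$ carries $A_y$ bijectively onto $G_w$ for some type $1$ vertex $w$ of $T_G$; I would fix such a $w=:\phi(y)$, chosen $A$-equivariantly, so that $\phi(b\,y)=f(b)\,\phi(y)$ (this is consistent since $b^{-1}b'\in A_y$ forces $f(b^{-1}b')\in f(A_y)=G_{\phi(y)}$). Every vertex stabilizer of $T_A$ acts elliptically on $T_G$ via $f$ — a type $1$ stabilizer fixes its vertex $\phi(y)$, and a type $0$ stabilizer $A_u$ fixes the type $0$ vertex whose stabilizer contains $f(A_u)$ — so there is an $A$-equivariant map $\psi\colon T_A\to T_G$ sending vertices to vertices and edges to edge-paths, agreeing with $\phi$ on type $1$ vertices. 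Since $f(a)=1$, we get $\psi(a\,x)=f(a)\,\psi(x)=\psi(x)$, so $\psi$ is constant on $\langle a\rangle$-orbits; in particular $\psi(y_0)=\psi(y_m)$, so the edge-path $\psi$ traces along $[y_0,y_m]$ is a closed path in the \emph{tree} $T_G$ and must either be constant or backtrack. Reading off where the cancellation occurs — and, if need be, adjusting $\psi$ near the type $1$ vertices of $L$ to move it onto a type $0$ vertex — I would extract an index $i$ with $\psi(y_i)=\psi(y_{i+1})$, i.e.\ $f(A_{y_i})=f(A_{y_{i+1}})$, while $y_i\neq y_{i+1}$.

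Finally I would need to guarantee that $y_i$ and $y_{i+1}$ lie in distinct $A$-orbits, so that $A_{y_i}$ and $A_{y_{i+1}}$ are genuinely non-conjugate with equal image. This is where minimality of $\ell$ should enter: if instead $y_{i+1}=b\,y_i$ with $b\in A\setminus A_{y_i}$, then $\psi(y_{i+1})=\psi(y_i)$ forces $f(b)\in\mathrm{Stab}_{T_G}(\psi(y_i))=f(A_{y_i})$, so choosing $b'\in A_{y_i}$ with $f(b')=f(b)$ produces a nontrivial element $bb'^{-1}\in\ker f$ that moves $y_i$ a distance $2$ along $L$, hence of translation length at most $2$; this contradicts the minimality of $\ell$ as soon as $\ell>2$, reducing everything to the case $\ell=2$. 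That residual case must be handled directly, using that $f$ is injective on the single type $0$ stabilizer $A_{z_0}$, that $a$ conjugates $A_{z_0}$ onto $A_{z_1}$, and $1$-acylindricity near type $1$ vertices of $T_G$, to exhibit two type $1$ vertices in distinct $A$-orbits with equal image. I expect this short-translation-length analysis — together with the related issue of controlling the \emph{type} of vertex at which the cancellation in the previous step is forced — to be the main obstacle; the rest is the standard dictionary between group actions on trees and splittings, plus the elementary observation that $\ker f$ acts freely on $T_A$.
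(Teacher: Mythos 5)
Your overall strategy --- construct an $A$-equivariant map from $T_A$ to $T_G$ using the fact that $f$ carries type $1$ vertex stabilizers bijectively onto type $1$ vertex stabilizers, observe that $\ker f$ must act freely and hence nontrivially on $T_A$, and then locate a backtracking point in the image of a segment of $T_A$ --- is the same as the paper's. However, the two places you flag as remaining obstacles are genuine gaps, and the minimal-translation-length reduction does not close them.

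First, the residual case $\ell = 2$ is not a corner case: it is exactly the situation where your two type $1$ vertices $y_0$ and $y_1 = a\cdot y_0$ with equal image lie in the \emph{same} $A$-orbit, which is precisely what the lemma needs to rule out. Your minimality argument produces, from a same-orbit pair, an element of $\ker f$ of translation length $\leq 2$; this only yields a contradiction if $\ell > 2$. So you have transformed the hard case into itself rather than resolved it. Second, you have not justified that the backtracking occurs at a type $0$ vertex of $T_A$ rather than a type $1$ vertex (your remark about ``adjusting $\psi$'' does not work, since the vertex-stabilizer map $\psi$ necessarily preserves types), and without this the backtracking could a priori give two type $0$ vertices with equal image, which is not what the lemma asks for.

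The paper disposes of both issues at once with a single argument, and in doing so avoids the translation-length machinery entirely. Given distinct edges $e = (u,v)$ and $e' = (u',v)$ of $T_A$ with $\psi(e) = \psi(e')$, if $u$ and $u'$ are in the same orbit, one finds (as you do) $c \in \ker f$ with $c\cdot u = u'$, and then notes that $c\cdot v \neq v$ because $f$ is injective on vertex stabilizers. Taking nontrivial $d \in A_e$ and $d' \in A_{e'}$, the element $cdc^{-1}$ lies in $A_{c\cdot e} \leq A_{u'}$, and since $c\cdot e$ and $e'$ are distinct edges at the type $1$ vertex $u'$, $1$-acylindricity forces $\langle cdc^{-1}, d'\rangle$ to be non-abelian (edge stabilizers at a type $1$ vertex are malnormal). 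But $f(\langle cdc^{-1}, d'\rangle) = \langle f(d), f(d')\rangle$ lies in the abelian edge stabilizer $G_{\psi(e)}$, contradicting injectivity of $f$ on $A_{u'}$. This simultaneously rules out the same-orbit case (your $\ell = 2$ obstacle) and, by an analogous argument, shows the backtracking vertex cannot be of type $1$ (your second obstacle). You already have all the ingredients you invoke in your sketch of the $\ell = 2$ case; the point is that the $1$-acylindricity/abelian-edge-group contradiction applies uniformly, which renders the minimal-translation-length setup and the resulting case split unnecessary.
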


\begin{proof}
The 1-acylindricity condition ensures that edges which meet at a type 1 vertex are malnormal, so because edge stabilizers are abelian we have that type 1 vertex stabilizers are non-abelian and fix a unique type 1 vertex. Type 0 vertex stabilizers similarly fix unique type 0 vertices. Given a vertex $v\in V(T_A)$, let $\ph(v)\in V(T_G)$ be the unique vertex which is of the same type as $v$ and is fixed by $f(A_v)\leq G_{\ph(v)}$. By 1-acylindricity $\ph$ preserves adjacency, so we can extend to a map $\ph:T_A\to T_G$ which maps edges to edges. Because $f$ is non-injective but is injective on vertex stabilizers, there exist distinct edges $e=(u,v),e'=(u',v)\in E(T_A)$ such that $\ph(u)=\ph(u')$ and $\ph(e)=\ph(e')$. In particular, we must have that $v$ is type 0 and both $u$ and $u'$ are type 1 by 1-acylindricity.

If $u$ and $u'$ lie in different orbits then the result follows immediately, so suppose that there exists $a\in A$ such that $u'=a\cdot u$. Then $f(a)\cdot\ph(u)=\ph(a\cdot u)=\ph(u')=\ph(u)$, so $f(a)\in G_{\ph(u)}=f(A_u)$ and hence there exists $b\in A_u$ such that $f(a)=f(b)$. Let $c=ab^{-1}$. Then $c\cdot u=a\cdot(b^{-1}\cdot u)=a\cdot u=u'$ and $c\in\ker(f)$. In particular $c\cdot v\neq v$ because $f$ is injective on vertex stabilizers. Let $d\in A_e$ and let $d'\in A_{e'}\leq A_{u'}$ be nontrivial. Then $cdc^{-1}\in A_{c\cdot e}\leq A_{u'}$, and by 1-acylindricity near $u'$ we have that $D:=\lang cdc^{-1},d'\rang$ is non-abelian. But then we must have that $f(D)=\lang f(cdc^{-1}),f(d')\rang=\lang f(d),f(d')\rang$ is abelian because $f(D)$ is a subgroup of the abelian group $G_{\ph(e)}$, which contradicts the injectivity of $f$ on $A_{u'}$.
\end{proof}

\begin{dfn}
A cyclic splitting $\Lambda$ of a finitely generated torsion-free group $A$ is \emph{centered} if it has vertices $v,v_1,\ldots,v_n$ with $n\geq 1$ such that $v$ is surface-type and every edge joins $v$ to some $v_i$. Let $Q=\pi_1(\Sigma)=A_v$, let $B_i=A_{v_i}$, and let $T$ be the Bass-Serre tree with respect to this splitting. We further require that the splitting $\Lambda$ is \emph{minimal} in the sense that, if $v_i$ is incident to only one edge, then that edge group must be a proper subgroup of $B_i$.

We will refer to $v$ as the \emph{central vertex} and each $v_i$ as \emph{exterior vertices}. The \emph{base} of $\Lambda$ is the (abstract) free product $B_\Lambda:=B_1*\cdots *B_n$. We say that a centered splitting is \emph{simple} if $n=1$, and that it is \emph{non-exceptional} if $\Sigma$ is non-exceptional.
\end{dfn}

\begin{figure}[t]\label{centered}
\begin{center}
\includegraphics[width=.5\textwidth]{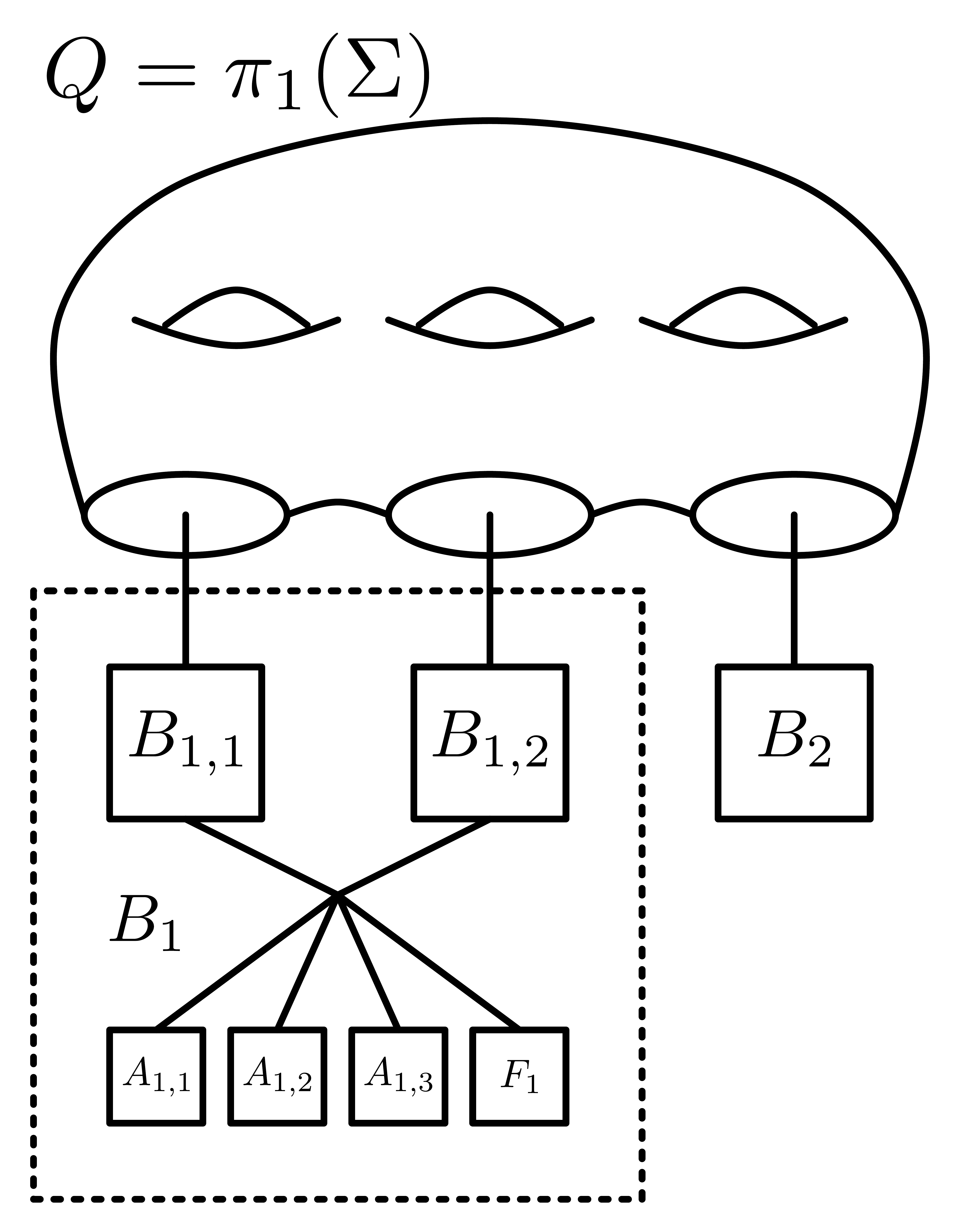}
\caption[A centered splitting.]{A centered splitting with the exterior vertex $B_1$ ``blown up'' to show its Grushko decomposition relative to the boundary subgroups.}
\end{center}
\end{figure}

\begin{dfn}
Let $Q=\pi_1(\Sigma)$ be the fundamental group of a compact surface and suppose that $Q$ is a subgroup of $G$. A \emph{boundary-preserving map (with values in $G$)} is a morphism $p:Q\to G$ (denoted $p:Q\sqto G$) which restricts to conjugation by an element of $G$ on each boundary subgroup of $\pi_1(\Sigma)$. Such a map is \emph{non-degenerate} if, additionally, $p(Q)$ is non-abelian, $p$ is not an isomorphism onto a conjugate of $Q$, and $\Sigma$ is non-exceptional. If $Q=A_v$ is a surface-type vertex in some splitting $\Lambda$ of $A\leq G$, we say that $p$ is \emph{with respect to $\Lambda$}.
\end{dfn}

\begin{dfn}
Let $Q=\pi_1(\Sigma)$ be the fundamental group of a compact surface and let $p:Q\to G$ be a group homomorphism. A 2-sided simple closed curve $\gamma$ is a \emph{pinched curve} if it is not nullhomotopic and if $\pi_1(\gamma)\leq\ker(p)$. A \emph{family of pinched curves} is a family of disjoint, pairwise non-parallel pinched curves. The map $p$ is a \emph{pinching map} if there is a pinched curve.
\end{dfn}

\begin{lem}[{\cite[Lemma 3.9]{GLS18}}]\label{lem39}
Let $\Lambda$ be a centered splitting of $G$ with central vertex group $Q=\pi_1(\Sigma)$. Let $S$ be another compact surface and let $p:\pi_1(S)\to G$ such that the image of each boundary subgroup of $\pi_1(S)$ is contained in a conjugate of an exterior vertex group of $\Lambda$.

Let $\cc$ be a maximal family of pinched curves on $S$ and let $S'$ be a component of $S$ obtained by cutting $S$ along $\cc$. Then either
\begin{enumerate}[(i)]
\item
$p(\pi_1(S'))$ is contained in a conjugate of an exterior vertex group of $\Lambda$; or
\item
there is an incompressible subsurface $Z\subseteq S'$ such that $p(\pi_1(Z))$ is finite index in $Q$ and $p$ maps boundary subgroups of $\pi_1(Z)$ into boundary subgroups of $Q$.
\end{enumerate}
\end{lem}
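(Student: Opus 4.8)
The plan is to study the action of $\pi_1(S')$ on the Bass--Serre tree $T$ of the centered splitting $\Lambda$ obtained by precomposing the $G$--action with $p$. First I would observe that, after cutting $S$ along $\cc$, the subsurface $S'$ includes $\pi_1$--injectively (up to conjugacy) into $\pi_1(S)$, so $p$ restricts to a homomorphism $\pi_1(S')\to G$ that is well defined up to conjugation in $G$, which is harmless for the statement. Every boundary subgroup of $\pi_1(S')$ is then elliptic in $T$: a boundary curve coming from $\partial S$ has image in a conjugate of an exterior vertex group by hypothesis, and a boundary curve coming from a pinched curve of $\cc$ has trivial image, hence fixes $T$ pointwise.

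The argument then splits according to whether $\pi_1(S')$ has a global fixed point in $T$. If it does, $p(\pi_1(S'))$ lies in a vertex stabilizer, i.e.\ in a conjugate of an exterior vertex group --- giving conclusion (i) --- or in a conjugate $gQg^{-1}$ of the central vertex group. If $\pi_1(S')$ has no global fixed point, then since $\pi_1(S')$ is a surface group with all peripheral subgroups elliptic, the classical description of surface-group actions on trees (Bass--Serre theory together with the fact that splittings of surface groups relative to their peripheral structure are carried by essential simple closed curves and arcs, or Skora duality) shows the action is dual to a nonempty family $\mathcal D$ of disjoint, essential, non--boundary-parallel simple closed curves (and possibly arcs) on $S'$: cutting $S'$ along $\mathcal D$ yields incompressible pieces $Y_1,\dots,Y_m$ with each $\pi_1(Y_j)$ elliptic, and $p$ carries each $\pi_1(Y_j)$ into a vertex stabilizer and each $\pi_1(\gamma)$, $\gamma\in\mathcal D$, into an edge stabilizer of $T$, compatibly with incidence. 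Edge stabilizers are cyclic (boundary subgroups of $\Sigma$); if some $\gamma\in\mathcal D$ had trivial image it would be a pinched curve of $S$ disjoint from, and not parallel to, any curve of $\cc$, contradicting maximality of $\cc$, so every $\gamma\in\mathcal D$ has nontrivial cyclic image inside a boundary subgroup of a conjugate of $\Sigma$. Because $T$ is bipartite with central and exterior vertices and no two central (resp.\ exterior) vertices adjacent, either no $Y_j$ maps to a central vertex --- forcing the image subtree of $\pi_1(S')$ to be a single exterior vertex, hence conclusion (i) --- or some piece $Z:=Y_j$ satisfies $p(\pi_1(Z))\leq gQg^{-1}$.

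In the two remaining cases we have an incompressible subsurface $Z\subseteq S'$ (either $S'$ itself or a piece $Y_j$) with $p(\pi_1(Z))\leq gQg^{-1}$, and I would next verify the two assertions of (ii). That $p$ sends each boundary subgroup of $\pi_1(Z)$ into a boundary subgroup of $g\Sigma$ follows from the incidence bookkeeping above: a boundary curve of $Z$ coming from $\mathcal D$ lies in an edge stabilizer adjacent to the fixed central vertex, which is a boundary subgroup of $g\Sigma$; a boundary curve coming from $\partial S$ has image in both a conjugate of an exterior vertex group and in $gQg^{-1}$, whose intersection is either trivial or again such an edge stabilizer; and a boundary curve coming from $\cc$ has trivial image. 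Moreover $p|_{\pi_1(Z)}$ is non-pinching, since an essential non--boundary-parallel simple closed curve of $Z$ with trivial image would, by the same subsurface considerations, be a pinched curve of $S$ disjoint from and non-parallel to $\cc$, once more contradicting maximality.

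The main obstacle is the last step: upgrading ``non-pinching, with boundary mapping into boundary'' to ``$p(\pi_1(Z))$ has finite index in $Q$.'' Here I would pass to the covering space $\widehat{\Sigma}\to\Sigma$ corresponding to $p(\pi_1(Z))$ and argue that $p|_{\pi_1(Z)}$ is realized by a $\pi_1$--isomorphism $Z\to\widehat{\Sigma}$: non-pinching excludes a nontrivial kernel, because a non-injective homomorphism from a surface-with-boundary group, after tightening, exhibits an essential simple closed curve in its kernel; and the boundary-into-boundary condition forces every end of $\widehat{\Sigma}$ to be a cusp lying over $\partial\Sigma$, so the compact core of $\widehat{\Sigma}$ is all of $\widehat{\Sigma}$, i.e.\ $\widehat{\Sigma}$ is compact and the index is finite. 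The technical friction lies in the exceptional surfaces and in the arcs that may occur in $\mathcal D$, both of which complicate the description of the boundary subgroups of the pieces; one handles the low-complexity surfaces by hand (for them the only nontrivial small splittings have cyclic vertex groups, so no piece maps onto a conjugate of $Q$ and conclusion (i) applies) or invokes the corresponding statements of \cite{Perin11, GLS17} directly, as is done in \cite{GLS18}.
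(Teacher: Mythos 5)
The paper itself offers no proof of this lemma --- it is quoted verbatim from \cite{GLS18} --- so your attempt can only be measured against the argument it would have to replace. Your first half (boundary subgroups of $\pi_1(S')$ are elliptic in the Bass--Serre tree, decompose $S'$ along a dual family of essential curves, use maximality of $\mathcal{C}$ to see that no essential non-boundary-parallel curve of a piece is pinched, then do the bipartite bookkeeping) is the standard skeleton, although the bookkeeping is stated loosely: a piece fixing a central vertex whose image lies in an incident edge group is also elliptic at an exterior vertex and has to be re-absorbed before the dichotomy can be read off, and your branch ``no $Y_j$ maps to a central vertex'' cannot actually occur once the dual family is nonempty, since the two sides of a dual curve map into the stars of the two endpoints of an edge, one of which is central. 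These are repairable.

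The genuine gap is the final step, which is where the content of the lemma lives. Your route is: non-pinching $\Rightarrow$ $p|_{\pi_1(Z)}$ injective, then ``boundary into boundary'' $\Rightarrow$ the cover $\widehat{\Sigma}$ of $\Sigma$ corresponding to the image is compact, hence finite index. Both implications fail as stated. The claim that a non-injective homomorphism from a surface-with-boundary group must kill an essential simple closed curve is a simple-loop-conjecture-type assertion and is false in general: the surjection from the once-punctured torus group $F(a,b)$ onto the torsion-free Klein bottle group $\langle a,b\mid a^2b^2\rangle$ kills no essential simple closed curve (primitives survive for homological reasons, and $[a,b]$ survives because the image is non-abelian), so ``tightening'' does not produce a pinched curve for free; in your restricted setting (target a conjugate of $Q$, boundary into boundary) such a statement would itself need proof. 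More importantly, injectivity is beside the point: conclusion (ii) is about the index of the image, and compactness of $\widehat{\Sigma}$ is a property of the image subgroup alone, not controlled by where $\partial Z$ is sent --- for instance, if $p(\pi_1(Z))$ lies in a boundary subgroup of $Q$, your boundary condition holds while $\widehat{\Sigma}$ is non-compact, so ``every end is a cusp over $\partial\Sigma$'' simply does not follow. The mechanism that actually yields finite index (in \cite{GLS18}, following \cite{Perin11} and Sela) is a degree argument: realize $p|_{\pi_1(Z)}$ by a map of pairs $(Z,\partial Z)\to(\Sigma,\partial\Sigma)$ and look at the image of the relative fundamental class in $H_2(\Sigma,\partial\Sigma)$; nonzero degree forces the image to have finite index, and the degree-zero case must then be excluded or routed back to alternative (i) using non-pinching together with the tautness/minimality of the decomposition. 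That dichotomy is the heart of the lemma, and your sketch does not supply it.
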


\begin{prop}[{\cite[Proposition 3.17]{GLS18}}]\label{extendedprop}
Let $\Lambda$ be a non-exceptional centered splitting of a finitely generated torsion-free group $A$ with central vertex subgroup $Q$ and exterior vertex subgroups $B_1,\ldots,B_n$.
\begin{enumerate}[(1)]
\item
If $n>1$ or if $n=1$ and $B_1\not\cong\bbz$, then there exists a non-degenerate boundary-preserving map $p:Q\sqto A$ if and only if there exist conjugates $\tilde{B_i}$ of each $B_i$ such that
\[B:=\lang\tilde{B_1},\ldots,\tilde{B_n}\rang\cong\tilde{B_1}*\cdots*\tilde{B_1}\cong B_\Lambda\]
and there is exists a retraction $r:A\surj B$ with $r(Q)$ non-abelian.

\item
If $n=1$ and $B:=B_1\cong\bbz$, then there exists a non-degenerate boundary preserving map $p:Q\sqto A$ if and only if there exists a retraction $r:A*\bbz\surj B*\bbz$ with $r(Q)$ non-abelian.
\end{enumerate}
\end{prop}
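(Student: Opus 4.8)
This is an equivalence, and the substantive direction is ``$\Rightarrow$'': producing the free product $B$ and the retraction $r$ from a non-degenerate boundary-preserving map $p\colon Q\sqto A$. The plan is to feed $p$ into Lemma~\ref{lem39} with $S=\Sigma$ the central surface of $\Lambda$. Its hypothesis holds, since boundary-preservation says $p$ sends each boundary subgroup of $Q$ to a conjugate of itself, and each boundary subgroup of $Q$ is an edge group of $\Lambda$, hence conjugate into an exterior vertex group $B_i$. Fix a maximal family $\cc$ of pinched curves on $\Sigma$ and let $\Sigma_1,\dots,\Sigma_k$ be the components obtained by cutting $\Sigma$ along $\cc$. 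For each $\Sigma_j$, Lemma~\ref{lem39} gives either (i) $p(\pi_1(\Sigma_j))$ lies in a conjugate of an exterior vertex group of $\Lambda$, or (ii) there is an incompressible $Z\subseteq\Sigma_j$ with $p(\pi_1(Z))$ of finite index in $Q$ and $p$ carrying boundary subgroups of $\pi_1(Z)$ into boundary subgroups of $Q$.

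The first step is to rule out alternative~(ii). Since $\Sigma$ is non-exceptional, $\chi(\Sigma)\le-2$ and $Q=\pi_1(\Sigma)$ is free and non-cyclic, so an incompressible $Z\subseteq\Sigma_j$ is neither a disc nor an annulus and $\chi(Z)\ge\chi(\Sigma)$. On the other hand $p(\pi_1(Z))$, being finite index in the surface group $Q$, is the fundamental group of a finite cover of $\Sigma$, so $\chi(p(\pi_1(Z)))=[Q:p(\pi_1(Z))]\cdot\chi(\Sigma)\le\chi(\Sigma)$. Now $\pi_1(Z)\twoheadrightarrow p(\pi_1(Z))$ is a surjection of finitely generated free groups, and such a surjection strictly lowers rank unless it is injective (free groups are Hopfian); so a non-injective $p|_{\pi_1(Z)}$ would give $\chi(Z)<\chi(p(\pi_1(Z)))\le\chi(\Sigma)$, contradicting $\chi(Z)\ge\chi(\Sigma)$, and therefore $p|_{\pi_1(Z)}$ is injective. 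Then $\chi(Z)=[Q:p(\pi_1(Z))]\cdot\chi(\Sigma)$, which together with $\chi(Z)\ge\chi(\Sigma)$ and $\chi(\Sigma)<0$ forces the index to be $1$ and $\chi(Z)=\chi(\Sigma)$; hence $Z\simeq\Sigma$, $\cc=\emptyset$, and $p\colon Q\to Q$ is an isomorphism onto $Q$, contradicting non-degeneracy. So every component $\Sigma_j$ is of type~(i).

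It follows that $p$ factors through the quotient of $Q$ by the pinched curves, among whose free factors are the $\pi_1(\Sigma_j)$, and that $p$ sends each $\pi_1(\Sigma_j)$ into a conjugate $a_jB_{l(j)}a_j^{-1}$ of an exterior vertex group. No boundary curve of $\Sigma$ is pinched (it generates an edge group of $\Lambda$, whose $p$-image is a conjugate of itself, so is non-trivial), so each boundary curve survives in some $\Sigma_j$; tracking these curves through the bipartite structure of $\Lambda$ one matches the components to exterior vertices compatibly with $\Lambda$, verifies that every $B_i$ is used (each exterior vertex carries an edge), and selects a single conjugator $a_i$ for each $i$. Setting $\tilde B_i:=a_iB_ia_i^{-1}$, one defines $r\colon A\to\langle\tilde B_1,\dots,\tilde B_n\rangle$ to be the identity on each $\tilde B_i$ and a coordinated conjugate of $p$ on $Q$; compatibility across the edge groups of $\Lambda$, which is exactly what the pinched-curve decomposition supplies, makes $r$ a well-defined homomorphism, and $r(Q)$ is a conjugate of $p(Q)$, hence non-abelian. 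The remaining point is that $\langle\tilde B_1,\dots,\tilde B_n\rangle=\tilde B_1*\cdots*\tilde B_n\cong B_\Lambda$; this I would obtain by a ping-pong argument on the Bass--Serre tree of $\Lambda$, choosing the $a_i$ so that the $\tilde B_i$ are elliptic at pairwise-distant vertices and using $1$-acylindricity near the exterior vertices in the manner of Lemma~\ref{lem24}. I expect this assembly to be the crux: converting the piecewise data ``each $\Sigma_j$ maps into some conjugate of some exterior vertex group'' into a single honest retraction requires choosing the conjugators coherently so that the $\tilde B_i$ land in free-product position (where acylindricity enters) and are fixed by $r$, while $r$ still restricts to a conjugate of $p$ on $Q$ compatibly across all the edge groups at once, and it is the topology of the maximal pinched-curve family that makes these constraints consistent.

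For the converse, given a retraction $r\colon A\surj B=\tilde B_1*\cdots*\tilde B_n$ (resp.\ $r\colon A*\bbz\surj B*\bbz$ in case~(2)) with $r(Q)$ non-abelian, one takes $p=r|_Q$: a short computation shows $r$ restricts to conjugation on each edge group of $\Lambda$, so $p$ is boundary-preserving, and $p(Q)=r(Q)$ is non-abelian; moreover $p$ is not an isomorphism onto a conjugate of $Q$, for such an isomorphism would --- through the action on the Bass--Serre tree of the free product $B$, whose edge stabilizers are trivial --- equip $Q$ with a free splitting in which every peripheral subgroup is elliptic, and since a surface group is freely indecomposable relative to its peripheral structure this forces $Q$ to be conjugate into a single $\tilde B_i$, which is excluded using that $r$ fixes $\tilde B_i$ pointwise together with the hypotheses on the centered splitting. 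Finally the degenerate case $n=1$, $B_1\cong\bbz$ reduces to~(1): adjoining a free $\bbz$ to $A$ replaces the single exterior vertex group $B_1$ by $B_1*\bbz\not\cong\bbz$ and gives a non-exceptional centered splitting of $A*\bbz$ with the same central surface, under which a non-degenerate boundary-preserving map $Q\sqto A$ corresponds to one $Q\sqto A*\bbz$, so part~(1) applies and produces a retraction onto a conjugate of $B_1*\bbz=B*\bbz$.
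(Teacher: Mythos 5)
The paper does not actually prove this statement—it is quoted from \cite[Proposition 3.17]{GLS18} and used as a black box—so there is no internal argument to compare yours against; judged on its own, your outline follows the expected Sela--Perin--GLS shape (pinch a maximal family, apply Lemma \ref{lem39}, assemble a retraction), but the two load-bearing steps are incomplete. The clearest gap is your dismissal of alternative (ii) of Lemma \ref{lem39}. Your Euler characteristic computation does show that $p|_{\pi_1(Z)}$ is injective and that the index is $1$, i.e.\ $p(\pi_1(Z))=Q$ and $\chi(Z)=\chi(\Sigma)$; but from this you cannot jump to ``$Z\simeq\Sigma$, $\cc=\emptyset$, and $p$ is an isomorphism onto $Q$.'' All you know is that some (possibly proper) subgroup $\pi_1(Z)\le Q$ is carried isomorphically onto $Q$, while $p(Q)$ may properly contain $p(\pi_1(Z))$ and $p$ may still be pinching, so non-degeneracy (which only forbids $p$ being an isomorphism of $Q$ onto a conjugate of $Q$) is not yet contradicted. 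Closing this needs more topology: $\chi(Z)=\chi(\Sigma)$ forces the complement of $Z$ in $\Sigma$ to consist of annuli; one must then rule out a pinched curve parallel to a boundary circle of $Z$ (it would kill a boundary subgroup of $\pi_1(Z)$, contradicting injectivity), and separately handle complementary annuli glued to $Z$ along both of their ends, where $\pi_1(Z)\lneq Q$ and no contradiction is immediate—this is exactly where maximality of $\cc$ and the clause that $p$ sends $\partial Z$ into boundary subgroups of $Q$ must be exploited.

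The second problem is the construction of $r$, which you yourself identify as the crux but leave as a plan, and the one concrete tool you name is not available: a centered splitting carries no acylindricity hypothesis, so ``ping-pong using $1$-acylindricity near the exterior vertices in the manner of Lemma \ref{lem24}'' has nothing to run on. The free-product position of the $\tilde B_i$ and the coherent choice of conjugators have to be extracted from the structure of the pinched map and normal forms in the fundamental group of the graph of groups, not from an assumed acylindricity; moreover, when $\Lambda$ has several edges between $v$ and some $v_i$, the group $A$ is not generated by $Q$ and the $B_i$ alone, and your prescription ``identity on each $\tilde B_i$, a coordinated conjugate of $p$ on $Q$'' says nothing about the Bass--Serre stable letters, whose images must be chosen so that the defining relations are respected and the map is still a retraction. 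Finally, your reduction of part (2) to part (1) only treats one direction: a retraction $A*\bbz\surj B*\bbz$ restricts on $Q$ to a boundary-preserving map with values in $A*\bbz$, not in $A$, so the ``if'' direction of (2) (and the non-abelianness of the image after the conjugation you use to replace a retraction onto a conjugate of $B*\bbz$ by one onto $B*\bbz$) still needs an argument.
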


\begin{dfn}\label{retdef}
Let $A$ be a finitely generated torsion-free subgroup of $G$. A splitting $\Lambda$ of $A$ is \emph{retractable in $G$} if there exists a non-exceptional surface-type vertex $Q=\pi_1(\Sigma)=A_v$ and a non-degenerate boundary-preserving map $p:Q\sqto G$. If $A=G$, we simply say that $\Lambda$ is retractable.

In particular, a centered splitting $\Lambda$ of $A$ is retractable if and only if it satisfies the equivalent conditions of Proposition \ref{extendedprop}, and if $\Lambda$ is a retractable splitting of $A$, then so is the centered splitting obtained from $\Lambda$ by collapsing edges not carrying $v$.
\end{dfn}

\begin{dfn}
A group $G$ is a \emph{floor} over a subgroup $H$ if either $G=H*\bbz$ or if $G$ has a retractable centered splitting with base isomorphic to $H$.

A group $G$ is a \emph{tower} over a subgroup $H$ if there exists a chain of subgroups $G=G^0>G^1>\cdots>G^m=H$ with each $G^i$ a floor over $G^{i+1}$. This tower is \emph{trivial} if $m=0$. 
\end{dfn}

\begin{exa}[{\cite[Example 3.26]{GLS18}}]\label{towerover1}
All non-exceptional surface groups are towers over $\{1\}$.
\end{exa}

\begin{lem}[{\cite[Remarks 3.21--22]{GLS18}}]\label{gtower}
\begin{enumerate}[(1)]
\item
If $G$ is a floor over $H$, then $G*G'$ is a floor over $H*G'$.
\item
If $G$ is a tower over $H$, then $G*G'$ is a tower over $H*G'$.
\item
If $G>H>K$ are floors in a tower such that $G=H*\bbz$ and $H$ has a retractable centered splitting with base isomorphic to $K$, then $G$ has a retractable centered splitting with base isomorphic to $H'=K*\bbz$, making $G>H'>K$ a tower, i.e., we may assume that floors at the tops of towers are surface-type.
\end{enumerate}
\end{lem}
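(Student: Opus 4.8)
The strategy mirrors Perin's argument for hyperbolic groups, adapted to the toral relatively hyperbolic setting using the relative JSJ theory of \cite{GL17} and the centered/retractable machinery of \cite{GLS18} set up above. We argue by induction on the complexity of $G$ relative to $H$. First I would handle the case where $G$ is freely decomposable relative to $H$: writing the Grushko decomposition of $G$ relative to $H$ as $G = H_1 * \cdots * H_k * F$ with $H \le H_1$ (after possibly permuting, and $F$ free), one uses elementarity to show that in fact $G = H * F$ with $F$ free, so $G$ is obtained from $H$ by finitely many $*\bbz$ floors; the point is that an elementary embedding is in particular an embedding into a group with the same $\forall\exists$-theory, and the freely-split complement must be free because $H$ is its own "envelope." (This uses that $H$ is itself toral relatively hyperbolic, being elementarily embedded, and standard facts about free decompositions relative to a subgroup.) So from now on assume $G$ is freely indecomposable relative to $H$, and we may take $H$ to be non-abelian and $G$ non-abelian (the abelian cases being immediate).

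\textbf{Constructing the centered splitting.} Take the JSJ splitting $\Lambda$ of $G$ relative to $H$ from Definition~\ref{jsj}; it is JSJ-like, bipartite with abelian and non-abelian vertices, and $1$-acylindrical near non-abelian vertices. The heart of the proof is to produce a non-exceptional surface-type vertex $v$ in $\Lambda$ (or in a modification of it) together with a non-degenerate boundary-preserving map $p : Q = G_v \sqto G$ whose image avoids a conjugate of $H$, so that collapsing the edges not carrying $v$ yields a retractable centered splitting of $G$ with base containing $H$. To get such a $p$, I would feed a suitable first-order formula encoding "$H$ is contained in a proper retract of $G$" up through the elementary embedding: since $H \elemb G$, any system of equations and inequations with parameters in $H$ that is solvable in $G$ is solvable in $H$. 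Concretely, one sets up the formula asserting the existence of a homomorphism $G \to G$ fixing $H$ pointwise, behaving like conjugation on the peripheral/boundary subgroups, and being non-trivial on some surface vertex — this is where Proposition~\ref{extendedprop} is the right target, since it converts the existence of a non-degenerate boundary-preserving map into the existence of exactly such a retraction onto the base. Finitely generated presentation of $G$ (toral relatively hyperbolic groups are finitely presented, indeed the vertex groups of $\Lambda$ are) lets one write this as a genuine first-order sentence with parameters in $H$.

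\textbf{The shortening / modular automorphism argument.} The real obstacle — and the technical core — is showing that the formula one needs is actually \emph{true in $G$}, i.e., that the "obvious" identity-like candidate is not the only solution: one needs a genuinely non-injective-on-some-surface modular map. This is exactly where the shortening argument enters. Following Perin, one considers the tautological embedding $H \hookrightarrow G$, composes with the modular automorphisms $\mathrm{Mod}(\Lambda)$ (Dehn twists along edge groups, surface mapping-class automorphisms, and abelian-vertex automorphisms — all available since $\Lambda$ is the JSJ) to form a sequence of embeddings, and runs the Bestvina–Paulin / Rips-machine analysis in the relatively hyperbolic setting (here one cites the toral relatively hyperbolic version of acylindrical accessibility and the shortening argument, e.g. along the lines of Groves \cite{Groves05,Groves09}). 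The output is that some modular automorphism strictly shortens — unless $G$ already has the structure of a floor over $H$, which happens precisely when there is a non-exceptional surface vertex admitting the non-degenerate boundary-preserving map. Lemma~\ref{lem24} and Lemma~\ref{lem39} are the tools that control degeneration: Lemma~\ref{lem24} guarantees that a non-injective map between the relevant bipartite trees forces two non-conjugate non-abelian vertex stabilizers to collapse together (which one rules out, or exploits), and Lemma~\ref{lem39} controls how pinched curves on a limiting surface must map into exterior vertex groups, ensuring the limiting surface genuinely sits over the base.

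\textbf{Closing the induction.} Once the retractable centered splitting of $G$ with base $B_\Lambda$ is obtained, set $G^1 := B_\Lambda$; by Proposition~\ref{extendedprop} and Definition~\ref{retdef}, $G$ is a floor over $G^1$. One checks $H \le G^1$ (the boundary-preserving map fixed $H$, and the retraction $r: G \surj B$ of Proposition~\ref{extendedprop} restricts to the identity on $H$, so $H \le B_\Lambda$ up to conjugacy; absorb the conjugation into the choice of $\tilde B_i$). Moreover $H \elemb G^1$: the retraction $r$ composed with the inclusion $G^1 \hookrightarrow G$ gives a retraction of $G$ onto $G^1$ fixing $H$, and a retract of a group containing an elementarily embedded subgroup, with the retraction restricting to the identity on that subgroup, is again an elementary extension of it (this is a standard "retraction preserves elementarity downward when $H$ is fixed" lemma — strictly, one shows $H \elemb G^1$ directly from $H \elemb G$ and the existence of the retraction $G \to G^1$). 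Since $G^1$ has strictly smaller complexity than $G$ relative to $H$ (fewer surface vertices / smaller Euler characteristic, or a strictly smaller Grushko complexity), by induction $G^1$ is a tower over $H$, and stacking the floor $G > G^1$ on top gives that $G$ is a tower over $H$. Finally, Lemma~\ref{gtower}(3) lets us normalize so that the topmost floor is surface-type, matching the stated form of a tower. The main difficulty throughout is ensuring the shortening argument goes through with the relative (rather than absolute) hyperbolicity and with abelian — not merely cyclic — edge and vertex groups; this is handled by working with the JSJ over $\cb$ relative to $\ch$ as in Definition~\ref{jsj} and invoking the relatively hyperbolic Rips theory.
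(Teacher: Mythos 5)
You have not proved the statement at hand. The statement is Lemma \ref{gtower}, a short combinatorial lemma about compatibility of floors and towers with free products: (1) a floor $G$ over $H$ gives a floor $G*G'$ over $H*G'$, (2) likewise for towers, and (3) a free-factor floor sitting above a centered-splitting floor can be reordered so the surface-type floor is on top. What you wrote instead is a proof plan for the main theorem of the paper (that an elementary embedding $H\elemb G$ makes $G$ a tower over $H$), in which Lemma \ref{gtower}(3) is merely \emph{invoked} at the end. None of the three assertions of the lemma is argued anywhere in your text, so as a proof of this statement it is entirely missing; moreover the lemma is purely about the definitions of floor and tower and requires no elementarity, no JSJ theory, and no shortening argument.

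The intended argument is elementary. For (1), split into the two cases of the definition of a floor: if $G=H*\bbz$, then $G*G'=(H*\bbz)*G'\cong(H*G')*\bbz$, so $G*G'$ is a free-factor floor over $H*G'$; if instead $G$ has a retractable centered splitting with base isomorphic to $H$, then the same centered splitting, with the base enlarged from $H$ to $H*G'$ (extend the retraction of Proposition \ref{extendedprop} by the identity on $G'$), exhibits $G*G'$ as a floor over $H*G'$, and retractability is preserved since the boundary-preserving map is unchanged. Then (2) follows from (1) by applying it to each floor of the chain $G=G^0>\cdots>G^m=H$. For (3), apply (1) with $G'=\bbz$ to the floor $H>K$: since $H$ has a retractable centered splitting with base isomorphic to $K$, the group $G=H*\bbz$ has a retractable centered splitting with base isomorphic to $H'=K*\bbz$, and $H'=K*\bbz$ is a free-factor floor over $K$, giving the reordered tower $G>H'>K$ with the surface-type floor on top.
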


\begin{proof}
It is clear that (2) follows from (1), so assume $G$ is a floor over $H$. If $G=H*\bbz$, then $G*G'=(H*\bbz)*G'=(H*G')*\bbz$ is a floor over $H*G'$. If $G$ has a retractable centered splitting with base isomorphic to $H$, then we can make $G*G'$ a floor over $H*G'$ by replacing the base of this splitting with $H*G'$. Then (3) follows similarly.
\end{proof}

\begin{lem}[{\cite[Proposition 3.31]{GLS18}}]\label{prop331}
If $A$ is a free factor of $G$ and a splitting of $A$ is retractable in $G$, then it is retractable in $A$.
\end{lem}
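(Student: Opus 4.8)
The plan is to reduce to a centered splitting and then push the witnessing boundary‑preserving map down along the retraction $G\to A$ afforded by the free‑product decomposition, checking that non‑degeneracy survives. By the last part of Definition~\ref{retdef} (collapsing the edges not meeting the relevant non‑exceptional surface‑type vertex changes neither its vertex group $Q$ nor its boundary subgroups), it suffices to treat the centered case, so I would assume $\Lambda$ is a non‑exceptional centered splitting of $A$ with central vertex group $Q=\pi_1(\Sigma)$, boundary subgroups $\lang c_1\rang,\dots,\lang c_n\rang$ (each $c_i\in A\setminus\{1\}$), and exterior vertex groups $B_1,\dots,B_n$. Write $G=A*C$; if $C=\{1\}$ there is nothing to prove, so take $C\neq\{1\}$ and let $\rho\colon G\to A$ be the retraction with $\rho|_A=\mathrm{id}$ and $\rho(C)=\{1\}$. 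Given a non‑degenerate boundary‑preserving $p\colon Q\sqto G$, the natural candidate is $p':=\rho\circ p\colon Q\to A$. Since $p$ restricts to conjugation by some $g_i\in G$ on $\lang c_i\rang$ and $c_i\in A$, the map $p'$ restricts to conjugation by $\rho(g_i)\in A$ there, so $p'$ is boundary‑preserving with values in $A$, and $\Sigma$ is still non‑exceptional. Everything then reduces to showing that $p'$ is non‑degenerate.

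To analyze $p'$ I would look at the action of $p(Q)$ on the Bass--Serre tree $T$ of $G=A*C$, which has trivial edge stabilizers. As $\Sigma$ has a boundary component, $p(c_1)=g_1c_1g_1^{-1}$ is a nontrivial element conjugate into $A\setminus\{1\}$, hence elliptic in $T$ and not conjugate into $C$; so $p(Q)$ is not conjugate into $C$. If $p(Q)$ is elliptic in $T$, then $p(Q)\le gAg^{-1}$ for some $g\in G$, and $\rho$ restricts on $gAg^{-1}$ to the isomorphism $gag^{-1}\mapsto\rho(g)\,a\,\rho(g)^{-1}$ onto $A$. Hence $\ker p'=\ker p$ and $p'(Q)\cong p(Q)$ is non‑abelian; and if $p'$ were an isomorphism onto a conjugate $aQa^{-1}$ of $Q$ in $A$, then pulling back through that isomorphism would make $p$ an isomorphism of $Q$ onto the conjugate $(ka)Q(ka)^{-1}$ with $k=g\rho(g)^{-1}$, contradicting non‑degeneracy of $p$. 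So $p'$ is non‑degenerate in this case.

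The hard part will be the case where $p(Q)$ is not elliptic in $T$: then $\rho$ genuinely collapses part of $p(Q)$ and one must rule out that $p'(Q)$ becomes abelian. Here I would use Lemma~\ref{lem39} through the auxiliary centered splitting $\Lambda^G$ of $G$ obtained from $\Lambda$ by replacing $B_n$ with $B_n*C$: it has central vertex group $Q$ with the same boundary subgroups, $\pi_1(\Lambda^G)=G$, is non‑exceptional with $B_n*C\not\cong\bbz$ (as $C\neq\{1\}$), and $p$ is still a non‑degenerate boundary‑preserving map with respect to it, so $\Lambda^G$ is retractable. Applying Lemma~\ref{lem39} with $S=\Sigma$ and the map $p$, after cutting $\Sigma$ along a maximal family $\cc$ of pinched curves for $p$, the non‑degeneracy of $p$ (via the mechanism underlying Proposition~\ref{extendedprop}: not every piece can map into a conjugate of an exterior vertex group of $\Lambda^G$) produces a component $S'$ with an incompressible subsurface $Z\subseteq S'$ such that $p(\pi_1(Z))$ has finite index in $Q$ and $p$ sends boundary subgroups of $\pi_1(Z)$ into boundary subgroups of $Q$. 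Since $\pi_1(Z)\le\pi_1(\Sigma)=Q\le A$ and $p(\pi_1(Z))\le Q\le A$, we get $p'(\pi_1(Z))=p(\pi_1(Z))$, a finite‑index — hence non‑abelian — subgroup of $Q$, so $p'(Q)$ is non‑abelian. Finally, if $p'$ were an isomorphism onto a conjugate of $Q$ in $A$ then $\ker p'=\{1\}$, so $p$ is injective, so $\cc=\emptyset$ and $Z\subseteq\Sigma$, and comparing Euler characteristics
\[\chi(\pi_1(Z))=[Q:p(\pi_1(Z))]\cdot\chi(Q)\le\chi(Q)=\chi(\Sigma)\le\chi(Z)=\chi(\pi_1(Z))\]
forces $p(\pi_1(Z))=Q$ and $\pi_1(Z)=Q$, so $p$ is an isomorphism of $Q$ onto the conjugate $Q$ of itself — again contradicting non‑degeneracy of $p$. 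Thus $p'$ is non‑degenerate and $\Lambda$ is retractable in $A$.

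The one genuinely delicate point I expect is the extraction, via Lemma~\ref{lem39}, of a subsurface $Z$ with $p(\pi_1(Z))$ of finite index in $Q$ in the non‑elliptic case — equivalently, ruling out that all pieces of a maximal pinched‑curve decomposition of $\Sigma$ map into conjugates of exterior vertex groups of $\Lambda^G$. This is exactly what keeps $p'(Q)$ from collapsing to an abelian subgroup, and it is where the non‑exceptionality of $\Sigma$ and the full strength of Lemma~\ref{lem39}/Proposition~\ref{extendedprop} are used; the elliptic case, by contrast, is handled directly because $\rho$ restricts to an isomorphism on any conjugate of $A$.
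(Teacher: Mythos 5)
Your setup — reduce to a centered splitting, take $p' = \rho\circ p$ with $\rho\colon G\to A$ the free-factor retraction, split on whether $p(Q)$ is elliptic in the Bass--Serre tree $T$ of $G = A*C$, and in the non-elliptic case pass to the auxiliary centered splitting $\Lambda^G$ of $G$ and invoke Lemma~\ref{lem39} — is sensible, and the elliptic case is handled correctly and cleanly. However, the non-elliptic case rests on the unproved claim that, because $p$ is non-degenerate, some component of $\Sigma$ cut along a maximal pinched family $\cc$ must land in case~(ii) of Lemma~\ref{lem39}; you attribute this to ``the mechanism underlying Proposition~\ref{extendedprop}'', but nothing stated in this paper gives it. Lemma~\ref{lem39} is only a per-component disjunction, and non-degeneracy of $p$ (which is just: $p(Q)$ non-abelian, $p$ not an isomorphism onto a conjugate of $Q$, $\Sigma$ non-exceptional) does not visibly preclude case~(i) holding for every component.

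This is precisely where your argument would break: if $\cc=\emptyset$ and $p(Q)$ lies in a conjugate of the exterior vertex group $B_n*C$ of $\Lambda^G$ — which is not elliptic in $T$ since $B_n$ and $C$ are both nontrivial, so this is inside your ``hard'' case — then $p'(Q)=\rho(p(Q))$ lies in a conjugate of $B_n$, and when $n=1$ and $B_1\cong\bbz$ this forces $p'(Q)$ to be abelian, so $\rho\circ p$ would be degenerate and a different witness $p'$ would be needed. So you need either to prove the ``some piece in case~(ii)'' claim outright (it is presumably established inside the GLS18 proof of Proposition~\ref{extendedprop}, but no such statement appears in this paper), or to give a separate construction handling the scenario where every component of $\Sigma\setminus\cc$ maps into a conjugate of an exterior vertex group of $\Lambda^G$. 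Note also that the paper supplies no proof of this lemma; it is quoted directly from \cite[Proposition 3.31]{GLS18}, so there is no in-paper argument to compare your approach against.
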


\begin{lem}[{\cite[Lemma 4.12]{GLS18}}]\label{lem412}
Let $\Lambda_G$ be a retractable centered splitting of $G$ with central vertex group $Q_G=\pi_1(\Sigma_G)$. Let $A$ be an exterior vertex group of $\Lambda_G$, let $Q_A=\pi_1(\Sigma_A)$ be a proper surface subgroup of $A$, and let $p:Q_A\sqto G$ be a non-degenerate boundary-preserving map. Then there exists a non-degenerate boundary-preserving map $p':Q_A\sqto A$ if either
\begin{enumerate}[(i)]
\item
no conjugate of $p(Q_A)$ contains a finite index subgroup of $Q_G$; or
\item
$p$ is pinching.
\end{enumerate}
\end{lem}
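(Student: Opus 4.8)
The plan is to build $p'$ by composing $p$ with the retraction onto the base furnished by the retractability of $\Lambda_G$ and with the projection of the base onto the free factor $A$, and then to verify non-degeneracy. Since $\Sigma_A$ is non-exceptional the surface group $Q_A$ is non-abelian, so it cannot embed in $\bbz$; hence part (2) of Proposition~\ref{extendedprop} is impossible here and we are in the situation of part (1), where retractability of $\Lambda_G$ provides a retraction $r\colon G\surj B$ onto $B=\tilde B_1\ast\cdots\ast\tilde B_n\cong B_{\Lambda_G}$, with each $\tilde B_j$ a conjugate of an exterior vertex group and $r(Q_G)$ non-abelian. Replacing the data by conjugates, we may assume $A=\tilde B_i$; let $\pi_A\colon B\surj A$ be the retraction killing the other factors and put $p':=\pi_A\circ r\circ p\colon Q_A\to A$. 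Because $r|_B=\mathrm{id}$ and $\pi_A|_A=\mathrm{id}$, a short computation shows that $r$ restricts to an inner automorphism of $G$ — in particular injectively — on every conjugate of an exterior vertex group of $\Lambda_G$, and $\pi_A$ restricts injectively to every $B$-conjugate of $A$. In particular, since the boundary subgroups of $Q_A$ lie in $A\le B$, if $p$ restricts to conjugation by $g_C$ on such a boundary subgroup $C$ then $p'$ restricts to conjugation by $\pi_A(r(g_C))\in A$ on $C$; thus $p'\colon Q_A\sqto A$ is boundary-preserving. As $\Sigma_A$ is non-exceptional, it remains to prove that $p'(Q_A)$ is non-abelian and that $p'$ is not an isomorphism onto a conjugate of $Q_A$.

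I would dispatch the second requirement first, as this is where hypotheses~(i) and~(ii) are genuinely used. If $p$ is pinching, a pinched curve $\gamma$ satisfies $\pi_1(\gamma)\le\ker p\le\ker p'$, so $p'$ is not even injective and we are done. If instead (i) holds, suppose for contradiction that $p'$ were an isomorphism onto a conjugate of $Q_A$; then $p'$, hence $r\circ p$, hence $p$ is injective, so $p$ has no pinched curves and Lemma~\ref{lem39} applies to $\Lambda_G$ and $p\colon\pi_1(\Sigma_A)\to G$ with empty pinched family — its hypothesis holds since $p$ is boundary-preserving and the boundary subgroups of $Q_A$ lie in the exterior vertex group $A$. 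The second alternative of Lemma~\ref{lem39} would place a finite-index subgroup of $Q_G$ inside $p(Q_A)$, contradicting (i); so the first alternative holds and $p(Q_A)\le gB_\ell g^{-1}$ for some $\ell$. Since $\pi_A$ kills the $B$-conjugates of $\tilde B_\ell$ when $\ell\ne i$ while $p'$ is injective on $Q_A\ne 1$, necessarily $\ell=i$; then $r$ and $\pi_A$ are injective on $p(Q_A)$ and on $r(p(Q_A))$ respectively, so $p$ is the composite of $p'$ with inverses of injective homomorphisms and is therefore itself an isomorphism onto a conjugate of $Q_A$ — contradicting the non-degeneracy of $p$. Hence $p'$ is not an isomorphism onto a conjugate of $Q_A$.

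For the first requirement I would apply Lemma~\ref{lem39} again, now with $\mathcal{C}$ a maximal family of pinched curves for $p$ on $\Sigma_A$. Under (i), as above, no component of $\Sigma_A\setminus\mathcal{C}$ has $p$-image covering $Q_G$ up to finite index, so each component maps into a conjugate of an exterior vertex group; moreover any component meeting $\partial\Sigma_A$ must map into a conjugate of $A$ itself, for otherwise $\pi_A\circ r$ would kill one of the boundary subgroups of $Q_A$, contradicting boundary-preservation, and on such a component $p'$ agrees with $p$ up to conjugacy. If some boundary-meeting component already has non-abelian $p$-image we are finished; otherwise one argues that $p'(Q_A)$ is non-abelian from the boundary-preserving condition together with the fact that distinct boundary subgroups of the surface subgroup $Q_A$ do not have commuting conjugates in $A$, checking the handful of low-complexity surface configurations directly. (Under (ii) the same analysis applies to the components meeting $\partial\Sigma_A$.)

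The step I expect to be the main obstacle is precisely this last point — ensuring the non-abelian content of $p$ survives the projection from $B$ onto $A$. When the non-abelian part of $p$ is carried by an interior component of $\Sigma_A\setminus\mathcal{C}$ whose image lies in a conjugate of some $\tilde B_\ell$ with $\ell\ne i$, the naive candidate $p'=\pi_A\circ r\circ p$ collapses exactly that part, and one must either exploit the precise definition of a morphism to rule out such a configuration, or replace $p'$ by a more carefully built boundary-preserving map (for instance by feeding the offending component back through the floor structure of $\Lambda_G$). Hypotheses~(i) and~(ii) are exactly what prevent $p$ from routing its essential content through the central surface $Q_G$; the remaining work lies in turning this into the required non-abelianness of $p'(Q_A)$.
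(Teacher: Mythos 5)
The paper never proves this statement—it is quoted directly from \cite[Lemma 4.12]{GLS18}—so your argument has to stand on its own, and it does not quite. The parts that do work: ruling out case (2) of Proposition \ref{extendedprop}; the verification that $p'=\pi_A\circ r\circ p$ is boundary-preserving (modulo the bookkeeping that $A$ itself need not be one of the free factors $\tilde B_j$ of $B$, only a conjugate of one is, which is fixable by inserting a conjugation into the definition of $p'$); and, under hypothesis (i), the argument that $p'$ cannot be an isomorphism onto a conjugate of $Q_A$, while under (ii) pinching gives non-injectivity for free.

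The genuine gap is the one you flag yourself: non-degeneracy also requires $p'(Q_A)$ to be non-abelian, and the global composition $\pi_A\circ r\circ p$ simply does not guarantee it. Nothing prevents every boundary-meeting piece of $\Sigma_A$ cut along $\mathcal{C}$ from having cyclic $p$-image (the boundary subgroups are themselves cyclic), with all of the non-abelian content of $p$ carried by interior pieces whose images lie in conjugates of $\tilde B_\ell$ with $\ell\neq i$—which $\pi_A$ kills—or, when only (ii) holds, by pieces whose image contains a finite-index subgroup of a conjugate of $Q_G$, on which $\pi_A\circ r$ may be abelian or even trivial: retractability only says $r(Q_G)$ is non-abelian in $B$, not that $\pi_A(r(Q_G))$ is. Your fallback—that distinct boundary subgroups of $Q_A$ have no commuting conjugates in $A$, plus a check of low-complexity configurations—is unproven and in any case unavailable when $\Sigma_A$ has a single boundary component, and the parenthetical claim that "the same analysis applies" under (ii) is precisely where the dichotomy of Lemma \ref{lem39} stops giving containment of boundary-meeting pieces in exterior conjugates. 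This is why the actual argument in \cite{GLS18} is not a single post-composition: the map $p'$ is assembled piece by piece on the components of $\Sigma_A$ cut along the pinched curves, keeping $p$ (up to conjugation) on pieces that already map into conjugates of $A$ and redefining it on the remaining pieces, with hypotheses (i) and (ii) used to control the pieces whose image meets $Q_G$ and to arrange a non-abelian image. Until the non-abelianness of $p'(Q_A)$ is actually secured, the proof is incomplete.
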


\begin{dfn}
Let $G$ be a group and let $g\in G$. We will denote the \emph{inner automorphism} $x\mapsto gxg^{-1}$ by $\iota_g\in\Aut(G)$.
\end{dfn}

\begin{dfn}
Let $\Lambda$ be an abelian splitting of a group $A$. We say that two morphisms $f,f':A\to G$ are \emph{$\Lambda$-related} if
\begin{itemize}
\item
for each edge $e$ of $\Lambda$, there exists $g_e\in G$ such that $f'|_{A_e}=\iota_{g_e}\circ f|_{A_e}$.
\item
for each vertex $v$ of $\Lambda$ which is either an exceptional surface-type vertex or a non-surface-type vertex, there exists $g_v\in G$ such that $f'|_{A_v}=\iota_{g_v}\circ f|_{A_v}$; and
\item
for each non-exceptional surface-type vertex $v$ of $\Lambda$, $f(A_v)$ is non-abelian if and only if $f'(A_v)$ is non-abelian.
\end{itemize}
\end{dfn}

\begin{dfn}
Let $A$ be a subgroup of $G$ and let $\Lambda$ be an abelian splitting of $A$. A morphism $p:A\to G$ is a \emph{preretraction (with respect to $\Lambda$, with values in $G$)} if it is $\Lambda$-related to the inclusion $A\emb G$. As with boundary-preserving maps, we will denote preretractions $p:A\sqto G$.
\end{dfn}

\begin{prop}[{Compare \cite[Proposition 5.17]{GLS18}}]\label{floor}
Let $G$ be a tower over $G'$ and let $A$ be a free factor of $G'$. Suppose that $\Lambda$ is an abelian splitting of $A$ which is bipartite with every edge carrying one abelian vertex and one non-abelian vertex, and that the Bass-Serre tree of $\Lambda$ is 1-acylindrical near vertices with non-abelian stabilizer. If there exists a non-injective preretraction $p:A\sqto G$ with respect to $\Lambda$, then $\Lambda$ is retractable in $A$.
\end{prop}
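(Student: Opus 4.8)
The plan is to follow the proof of \cite[Proposition 5.17]{GLS18}, making two adjustments: our splittings have abelian rather than cyclic edge groups, which is handled by invoking the abelian acylindrical-tree dichotomy (Lemma \ref{lem24}) in place of \cite[Lemma 2.4]{GLS18}; and our ambient groups are toral relatively hyperbolic, which is harmless since such groups are torsion-free and CSA. We may assume $A$ is non-abelian, since otherwise $\Lambda$ is trivial and admits no non-injective preretraction. It suffices to produce a non-exceptional surface-type vertex $Q$ of $\Lambda$ together with a non-degenerate boundary-preserving map $Q\sqto G'$: as $A$ is a free factor of $G'$, Lemma \ref{prop331} then promotes ``$\Lambda$ retractable in $G'$'' to ``$\Lambda$ retractable in $A$''. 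Each floor of a tower $G=G^0>G^1>\cdots>G^m=G'$ comes with a retraction $r_i\colon G^i\surj G^{i+1}$ --- the retraction onto the base furnished by Proposition \ref{extendedprop} in the centered-splitting case (possibly after harmlessly adjoining a $\bbz$-factor, as in Proposition \ref{extendedprop}(2)), or the obvious one when $G^i=G^{i+1}*\bbz$. Put $\varrho:=r_{m-1}\circ\cdots\circ r_0\colon G\surj G'$ and $q:=\varrho\circ p\colon A\to G'$. Since $\varrho$ restricts to the identity on $G'\supseteq A$, the map $q$, like $p$, agrees with the inclusion $A\hookrightarrow G'$ up to conjugation on every edge group and on every abelian, rigid, or exceptional surface-type vertex group of $\Lambda$.

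Suppose first that $q$ has non-abelian image on every non-exceptional surface-type vertex group; then $q\colon A\sqto G'$ is a preretraction with respect to $\Lambda$, and it is non-injective (otherwise $p$ would be injective too, since $q=\varrho\circ p$). Extend $\Lambda$ across a free factor decomposition $G'=A*C$ to a bipartite splitting $\hat\Lambda$ of $G'$ with abelian edge groups that is $1$-acylindrical near its non-abelian vertices --- possible since $A$ is non-abelian --- and extend $q$ to $\hat q\colon G'\sqto G'$ by the identity on $C$. Apply Lemma \ref{lem24} to $\hat q$, with both Bass--Serre trees taken to be that of $\hat\Lambda$: if $\hat q$ were an isomorphism onto a conjugate of $Q$ at every non-exceptional surface-type vertex group $Q$, then it would restrict to an isomorphism onto a conjugate at every non-abelian vertex group, so Lemma \ref{lem24} would yield two non-conjugate non-abelian vertex groups with a common image; but such an image is conjugate to each of them, so they would be conjugate, a contradiction. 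Hence for some non-exceptional surface-type vertex group $Q=A_v$ the restriction $q|_Q$ is not an isomorphism onto a conjugate of $Q$; since it conjugates the boundary subgroups (edge groups of $\Lambda$) and has non-abelian image, $q|_Q\colon Q\sqto G'$ is a non-degenerate boundary-preserving map, and we are done.

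Otherwise $\varrho$ collapses $p(Q)$ to an abelian subgroup for some non-exceptional surface-type vertex group $Q=A_v$, even though $p(Q)$ is non-abelian. Then $p|_Q\colon Q\sqto G^0$ is a non-degenerate boundary-preserving map: its image is non-abelian, it conjugates boundary subgroups, and it is not an isomorphism onto a conjugate of $Q$ (else $\varrho(p(Q))$ would be a non-abelian conjugate of $Q$). Let $j$ be least with $(r_j\circ\cdots\circ r_0)(p(Q))$ abelian; then $p_j:=(r_{j-1}\circ\cdots\circ r_0)\circ p|_Q\colon Q\sqto G^j$ is again a non-degenerate boundary-preserving map, whose image becomes abelian under the floor retraction $r_j$. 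We now descend across this critical floor. Normalizing the tower by Lemma \ref{gtower}(3) so the $*\bbz$-floors lie at the bottom: if $G^j$ is such a floor, a pinched-curve analysis of the type behind Lemma \ref{lem39} gives a non-degenerate boundary-preserving map $Q\sqto G^{j+1}$; if $G^j$ instead carries a retractable centered splitting with central surface group $Q_{G^j}$, then, since $Q$ is freely indecomposable relative to its boundary subgroups and these are conjugate into exterior vertex groups, $Q$ lies (after conjugation) in a single exterior vertex group $B$, and Lemma \ref{lem412} provides a non-degenerate boundary-preserving map $Q\sqto B$ as soon as either (i) no conjugate of $p_j(Q)$ contains a finite-index subgroup of $Q_{G^j}$, or (ii) $p_j$ is pinching. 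The one remaining subcase --- $p_j(Q)$ contains, up to conjugacy, a finite-index subgroup of $Q_{G^j}$ while $p_j$ is non-pinching --- forces $p_j$ to be essentially a finite cover of $\Sigma_{G^j}$, and then a pseudo-Anosov/Dehn-twist argument on $\Sigma_{G^j}$ supplies the required boundary-preserving map directly. Iterating the descent down the remaining floors (this is where the induction on tower height, or on a finer complexity, is carried out) produces a non-degenerate boundary-preserving map $Q\sqto G'$.

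The main obstacle I anticipate is exactly this descent across the critical floor --- especially the subcase in which neither hypothesis of Lemma \ref{lem412} holds --- together with the bookkeeping it entails: at each floor one must identify, via Kurosh subgroup considerations and the fact that surface groups with nonempty boundary are freely indecomposable relative to their peripheral structure, which exterior vertex group of that floor the surface-type vertex group $Q$ descends into, so that Lemma \ref{lem412} (or its $*\bbz$-analogue) can be applied one floor at a time. The hypotheses that $\Lambda$ be bipartite with abelian and non-abelian vertices and $1$-acylindrical near the non-abelian ones are precisely what make the relevant edge and vertex subgroups malnormal enough for these descents, and for the use of Lemma \ref{lem24} above, to succeed.
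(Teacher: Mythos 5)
Your overall strategy—composing $p$ with the tower retractions down to $G'$, then splitting into the case where the composition remains a preretraction and the case where some non-exceptional surface-type vertex group $Q$ gets abelianized partway down—is the same as the paper's. But there are two places where you depart, one a needless complication and one a genuine gap.

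First, in the case where $q=\varrho\circ p$ is a preretraction of $A$, you extend $\Lambda$ to a splitting $\hat\Lambda$ of $G'=A*C$ and $q$ to $\hat q:G'\to G'$, then try to apply Lemma~\ref{lem24} to $\hat q$. This does not work as stated: if you absorb $C$ into a non-abelian vertex group $A_v$ to form the vertex group $A_v*C$ of $\hat\Lambda$, then $\hat q$ sends it to $gA_vg^{-1}*C$ (where $q|_{A_v}=\iota_g$), which is generally \emph{not} conjugate to $A_v*C$, so $\hat q$ does not map that type-$1$ vertex stabilizer bijectively onto a type-$1$ vertex stabilizer and the hypotheses of Lemma~\ref{lem24} fail. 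The clean route, which the paper takes, is to compose with the retraction $r:G'\surj A$ onto the free factor and apply Lemma~\ref{lem24} to $r\circ q:A\to A$ directly (with both trees equal to the Bass--Serre tree of $\Lambda$): if every non-exceptional surface-type vertex group were mapped isomorphically onto a conjugate, then every type-$1$ vertex stabilizer would be, and since $r\circ q$ is non-injective Lemma~\ref{lem24} would produce two non-conjugate type-$1$ vertex stabilizers with conjugate images, a contradiction. No extension of the splitting or of the map is needed.

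Second, and this is the real gap, the subcase you isolate—where $p_j(Q)$ contains, up to conjugacy, a finite-index subgroup of the central surface group $Q_{G^j}$ while $p_j$ is non-pinching—\emph{never occurs}, so the pseudo-Anosov/Dehn-twist patch you sketch is not needed. The point you are missing is that $j$ is chosen so that $p_{j+1}(Q)=r_j(p_j(Q))$ is abelian. If $p_j(Q)$ contained a finite-index subgroup $F$ of $Q_{G^j}$, then $r_j(F)\leq p_{j+1}(Q)$ would be abelian; but $r_j(F)$ has finite index in $r_j(Q_{G^j})$, which is non-abelian (that is exactly what makes $\Gamma_j$ a retractable centered splitting), and a non-abelian subgroup of a torsion-free CSA group cannot contain an abelian subgroup of finite index—so $r_j(F)$ is non-abelian, a contradiction. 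Similarly $p_j(Q)$ cannot be contained in a conjugate of an exterior vertex group, for then $p_{j+1}(Q)$ would be conjugate to $p_j(Q)$ and hence non-abelian. Ruling out both alternatives, Lemma~\ref{lem39} forces $p_j$ to be pinching, so clause (ii) of Lemma~\ref{lem412} always applies, and (since pinching is preserved) it can be iterated down the remaining floors with no further case analysis. Once you see this, your "main obstacle" disappears and the descent is mechanical, exactly as in the paper.
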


\begin{proof}
First suppose that $G'=G$, so that $A$ is a free factor of $G$. If some surface-type vertex subgroup $Q\leq A$ is not mapped isomorphically to a conjugate, then $p|_Q:Q\sqto G$ is a non-degenerate boundary-preserving map, so $\Lambda$ is retractable in $G$ and hence in $A$ by Lemma \ref{prop331}. Otherwise if every surface-type vertex group is mapped isomorphically to a conjugate, let $r:G\surj A$ be the retraction of $G$ onto its free factor $A$. But then $r\circ p:A\to A$ is injective by Lemma \ref{lem24}, which contradicts the assumption that $p$ is non-injective.

Now suppose that $G=G^0>G^1>\cdots>G^m=G'$ with each $G^i$ a floor over $G^{i+1}$. By Lemma \ref{gtower} we may assume that each floor is of surface type, so let $\Gamma_i$ be the retractable centered splitting of $G^i$ with central vertex group $Q_i$ and base isomorphic to $G^{i+1}$. Fix retractions $r_i:G^i\surj G^{i+1}$ and define $p_i:=r_{i-1}\circ r_{i-2}\circ\cdots\circ r_0\circ p:A\to G^i$
with $p_0=p$. Because $A$ is freely indecomposable it is contained in a conjugate of an exterior vertex subgroup of each $\Gamma_i$ and $r_i|_A$ agrees with conjugation. If $p_m$ is a preretraction then the result follows from the previous case. Otherwise the splitting $\Lambda$ has a non-exceptional surface-type vertex group $Q$ such that $p_m(Q)$ is abelian. Because $p(Q)$ is non-abelian there exists some maximal index $i$ such that $p_i(Q)$ is non-abelian.

Let $p':=p_i|_Q:Q\to G_i$. Then $p'$ is a boundary-preserving map and $p'(Q)$ is not conjugate to $Q$ because $r_i|_A$ is injective and $p_{i+1}(Q)=r_i\circ p_i(Q)$ is abelian by assumption. Thus $p'$ is non-degenerate. Furthermore, $p'(Q)$ cannot contain a finite index subgroup of $Q_i$ because $r_i(Q_i)$ is non-abelian, nor can $p'(Q)$ be contained in an exterior vertex group of $\Gamma_i$. Thus $p'$ is pinching by Lemma \ref{lem39}. By applying Lemma \ref{lem412} $m-i$ times we obtain a pinching non-degenerate boundary-preserving map $Q\sqto G^m=G'$. Then $\Lambda$ is retractable in $H$ and hence in $A$ by Lemma \ref{prop331}.
\end{proof}

\subsection{Proof of the main theorem}

\begin{lem}
If $G$ is a toral relatively hyperbolic group and $H$ is a retract of $G$, then $H$ is also a toral relatively hyperbolic group.
\end{lem}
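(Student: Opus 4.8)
The plan is to check that $H$ satisfies each clause of the definition of a toral relatively hyperbolic group. Finite generation and torsion-freeness are immediate: $H$ is the image of the retraction $r\colon G\to H$, hence finitely generated, and it is torsion-free as a subgroup of $G$. The substance is to exhibit the relative hyperbolicity, and the right peripheral structure to aim for is the one \emph{induced} by that of $G$: if $\cp$ is the peripheral structure of $G$ (conjugacy representatives of its maximal non-cyclic abelian subgroups), I want $H$ to be hyperbolic relative to (a set of $H$-conjugacy representatives of) the infinite subgroups of the form $H\cap P^g$, with $P\in\cp$ and $g\in G$.

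The first main step is to show that $H$ is a \emph{relatively quasiconvex} subgroup of $G$. Choosing a finite generating set of $G$ that contains a generating set of $H$, the retraction $r$ is Lipschitz for the word metrics and fixes $H$ pointwise, so $H$ is undistorted in $G$; but undistortion only controls geodesics in the Cayley graph, whereas relative quasiconvexity concerns geodesics in the coned-off Cayley graph (equivalently, in the Groves--Manning cusped space), and $r$ need not send peripheral cosets to peripheral cosets. So here I would invoke the known fact that a retract of a relatively hyperbolic group is relatively quasiconvex --- this can be seen, for instance, from the undistortion of $H$ together with the tree-graded structure of the asymptotic cones of $G$ (a Lipschitz retract of a tree-graded space is tree-graded with respect to the induced pieces), or from Hruska's intrinsic characterizations of relative quasiconvexity. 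I expect this to be the main obstacle. Granting it, Hruska's theorem then gives that the relatively quasiconvex subgroup $H$ is itself hyperbolic relative to $\cp_H=\{H\cap P^g\}$, in a sense equivalent to the coned-off-plus-fineness definition used in this paper.

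It remains to recognize $\cp_H$ as the maximal non-cyclic abelian peripheral structure of $H$. Each $H\cap P^g$ is abelian, being contained in the abelian group $P^g$. If $a\neq 1$ lies in $H\cap P^g$, then $a$ is parabolic, and since $G$ is torsion-free and its peripheral subgroups are malnormal and self-normalizing, the centralizer $C_G(a)$ is contained in the unique peripheral conjugate containing $a$, namely $P^g$; hence every abelian subgroup of $H$ containing $H\cap P^g$ is contained in $C_G(a)\cap H\subseteq P^g\cap H$, so $H\cap P^g$ is already maximal abelian in $H$. Conversely, any maximal non-cyclic abelian subgroup of $H$ is infinite, abelian, and not virtually cyclic, hence parabolic in $G$, hence contained in --- and therefore equal to --- some $H\cap P^g$. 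Finally, any member of $\cp_H$ that is (infinite) cyclic is virtually cyclic and may be deleted from the peripheral structure without changing relative hyperbolicity. What is left is a set of conjugacy representatives of the maximal non-cyclic abelian subgroups of $H$, all of which are finitely generated (being subgroups of finitely generated abelian groups), so $H$ is toral relatively hyperbolic.
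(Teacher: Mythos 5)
Your approach is essentially the same as the paper's: show $H$ is undistorted via the retraction (the paper arranges a generating set $S_G$ with $r(s)\in\{s,1\}$ for all $s\in S_G$, which makes the inclusion $H\hookrightarrow G$ actually \emph{isometric}, not just quasi-isometric), then cite Hruska's Theorem~1.5 to pass from undistorted to relatively quasiconvex and Theorem~1.2 to pass to relatively hyperbolic. The step you flag as ``the main obstacle''---that undistortion in the Cayley graph does not a priori control the coned-off geometry---is precisely what Hruska's Theorem~1.5 resolves, since the peripheral subgroups of a toral relatively hyperbolic group are finitely generated abelian and hence slender; your extra verification that the induced peripheral structure is the set of maximal non-cyclic abelian subgroups of $H$ is a careful point the paper leaves implicit.
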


\begin{proof}
Let $G=\lang S_G\rang$ with $S_G$ finite and let $r:G\surj H$ be the retraction. Then $H=\lang r(S_G)\rang$ is finitely generated as well. Without loss of generality, we may choose $S_G$ so that $r(s)=s$ or $r(s)=1$ for all $s\in S_G$. Otherwise if $r(s)=t$ with $t\neq s$ and $t\neq 1$, we can define a new generating set $(S_G\setminus\{s\})\cup\{st^{-1},t\}$ with $r(t)=t$ and $r(st^{-1})=1$. In particular, we can choose $S_H\subseteq S_G$ so that $H=\lang S_H\rang$ and
\[r(s)=\pw{s & s\in S_H\\ 1 & s\notin S_H}.\]

Let $(X_G,d_G)$ and $(X_H,d_H)$ be the Cayley graphs of $G$ and $H$ with respect to $S_G$ and $S_H$, respectively. Suppose that $a,b\in H$ with $d_G(a,b)=n$. Then there exist $s_1,\ldots,s_n\in S_G$ such that $a^{-1}b=s_1\cdots s_n$. It is clear that $s_i\in S_H$ for all $i$, as otherwise we could find a shorter word by applying the retraction $r$. Thus $d_H(a,b)=n=d_G(a,b)$, so $H$ is quasi-isometrically embedded in $G$. Then $H$ is relatively quasiconvex by \cite[Theorem 1.5]{Hruska10} and hence a toral relatively hyperbolic group by \cite[Theorem 1.2]{Hruska10}.
\end{proof}

The proof of the main theorem relies on the following, which are the main technical results of this paper and are proved in \S\ref{sec6}.

\begin{restatable*}[Compare {\cite[Proposition 5.13]{Perin11}}]{mprop}{mainpropa}\label{global}
Let $G$ be a toral relatively hyperbolic group and let $H\elemb G$ be an elementarily embedded subgroup. If $A$ is a retract of $G$ which properly contains $H$, is toral relatively hyperbolic, and is freely indecomposable relative to $H$, then there exists a non-injective preretraction $p:A\sqto G$ with respect to the JSJ splitting of $A$ relative to $H$.
\end{restatable*}

\begin{restatable*}[Compare {\cite[Proposition 5.14]{Perin11}}]{mprop}{mainpropb}\label{global1}
Let $G$ be a toral relatively hyperbolic group and let $H\elemb G$ be an elementarily embedded subgroup. If $H$ is a toral relatively hyperbolic retract of $G$ and $C$ is a freely indecomposable torsion-free hyperbolic subgroup of $G$ such that no non-trivial elements of $C$ are conjugate into $H$ by an element of $G$ and $C$ is neither cyclic nor a non-exceptional surface group, then there exists a non-injective preretraction $p:C\sqto G$ with respect to the JSJ splitting of $C$.
\end{restatable*}

\mainthms

\begin{proof}
If $G\cong\bbz^r$ is abelian then $H=G$ because $\bbz^r$ has no proper elementarily embedded subgroups (see Lemma \ref{finite-index}). Then trivially $G$ is a tower over $H$, so suppose that $G$ is a non-abelian toral relatively hyperbolic group and $H$ is a proper subgroup of $G$.

Let $G^0=G$, and assuming that $G$ is a tower over a toral relatively hyperbolic group $G^m$ containing $H$ for $m\geq 0$, let
\[G^m=A^m*C_1^m*\cdots*C_{k_m}^m\]
be the Grushko decomposition of $G^m$ relative to $H$, where $A^m$ is the factor containing $H$. Suppose that $A^m\neq H$, and let $\Lambda$ be the JSJ splitting of $A^m$ relative to $H$. Then by Proposition \ref{global} there exists a non-injective preretraction $A^m\sqto G$ with respect to $\Lambda$, so $\Lambda$ is retractable in $A^m$ by Proposition \ref{floor}. We can then collapse edges of $\Lambda$ as in Definition \ref{retdef} to obtain a retractable centered splitting of $A^m$, making $A^m$ a floor over the base $B^m$ of this splitting. Then $B^m$ is a toral relatively hyperbolic group because it is the image of a series of retractions
\[G=G^0\surj G^1\surj\cdots\surj G^m\surj A^m\surj B^m\]
by the definition of a tower, so we define $G^{m+1}=B^m*C_1^m*\cdots*C_{k_m}^m$ making $G$ a tower over $G^{m+1}$.

Thus we obtain a sequence of $G$-limit groups
\[G=G^0>G^1>G^2>\cdots\]
which terminates at some $G^N$ by \cite[Theorem 5.2]{Groves05}. Because this sequence terminates we must have $G^N=H*C_1*\cdots*C_k$. Each $C_i$ is a toral relatively hyperbolic group because each is a retract of $G$, and in particular they are all torsion-free hyperbolic groups because the conjugates of $H$ and $C_i$ intersect trivially in $G^N$ and $H$ is torally complete in both $G$ and $G^N$. If each $C_i$ is either cyclic or a non-exceptional surface group, then each $C_i$ would be a tower over $\{1\}$ by Example \ref{towerover1}.

Suppose that some factor $C_i$ is neither cyclic nor a non-exceptional surface group. Conjugates of $H$ and $C_i$ intersect trivially in $G^N$ and hence in $G$ because $G^N$ is a retract of $G$. By Proposition \ref{global1} we obtain a non-injective preretraction $p:C_i\sqto G$ with respect to the JSJ splitting $\Lambda$ of $C_i$. By Proposition \ref{floor} we obtain a retractable centered splitting of $C_i$, making $C_i$ a floor over the base $D_i$ of this splitting. Because $D_i$ is also a retract of $G$ we may apply this process again to the Grushko decomposition of $D_i$ if any factors are neither cyclic nor a non-exceptional surface group. This process again must terminate, so we find that $C_i$ is a tower over $\{1\}$.

Then $G^N=H*C_1*\cdots*C_k$ is a tower over $H*1*\cdots*1\cong H$ by Lemma \ref{gtower}, so $G$ is a tower over $H$.

\end{proof}
\begin{cor}\label{maincor}
If $G$ is a toral relatively hyperbolic group and $H\elemb G$ is an elementary embedding, then $H$ is a toral relatively hyperbolic group.
\end{cor}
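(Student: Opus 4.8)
The plan is to derive this directly from the Main Theorem together with the retract lemma established earlier in this section. If $G$ is abelian then $H = G$ and there is nothing to show, so assume $G$ is non-abelian. By the Main Theorem, $G$ is a tower over $H$, so there is a chain $G = G^0 > G^1 > \cdots > G^m = H$ in which each $G^i$ is a floor over $G^{i+1}$. The first thing I would check is that each $G^{i+1}$ is a retract of $G^i$: when $G^i = G^{i+1} * \bbz$ this is the projection killing the $\bbz$ factor, and when $G^i$ carries a retractable centered splitting with base $G^{i+1}$ the retraction $G^i \surj G^{i+1}$ is furnished by Proposition \ref{extendedprop}. In fact this is already implicit in the proof of the Main Theorem: there one reaches $G^N = H * C_1 * \cdots * C_k$ with each new base $B^m$ exhibited as the image of a chain of retractions $G = G^0 \surj \cdots \surj A^m \surj B^m$ and each $C_i$ called a retract of $G$, so $G^N$ is a retract of $G$; composing with the free-product projection $G^N \surj H$ then exhibits $H$ as a retract of $G$.

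It remains only to invoke the retract lemma: a retract of a toral relatively hyperbolic group is again a toral relatively hyperbolic group. Recall that this lemma is proved by choosing a generating set of $G$ adapted to the retraction $r : G \surj H$, so that each generator maps either to itself or to $1$; it follows that $H$ embeds isometrically, hence quasi-isometrically, into the Cayley graph of $G$, so by Hruska's characterization of relatively quasiconvex subgroups of relatively hyperbolic groups \cite{Hruska10} the subgroup $H$ is relatively quasiconvex in $G$ and therefore itself a toral relatively hyperbolic group. Applying this to the retraction $G \surj H$ produced above gives the corollary.

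I do not anticipate any genuine difficulty here, since all of the substance is already contained in the Main Theorem and in the retract lemma. The only point I would take a little care with is verifying that the tower over $H$ yields an honest retraction of $G$ onto $H$ itself, rather than merely onto an isomorphic copy of $H$ — but this is built into the way the tower is assembled in the proof of the Main Theorem, where $H$ sits inside an exterior vertex group of each retractable centered splitting, hence inside the corresponding base, so that each of the relevant retractions restricts to the identity on $H$.
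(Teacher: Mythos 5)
Your proposal is correct and follows exactly the route the paper intends: the corollary is stated without proof precisely because it is meant to follow by combining the Main Theorem (which produces the retraction chain $G = G^0 \surj \cdots \surj G^N \surj H$, with $H$ fixed pointwise throughout since it sits inside every base) with the retract lemma established at the top of the same subsection. Your remark about checking that the retraction lands on $H$ itself rather than an isomorphic copy is the right thing to worry about, and your resolution — that $H$ is contained in an exterior vertex group at every stage, so each retraction restricts to the identity on $H$ — is exactly how the paper's construction guarantees it.
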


\section{First-order logic}

For $n\geq 0$, we will use the notations $\ux$ or $(\ux)$ to denote finite \emph{ordered} tuples $(x_1,\ldots,x_n)$. A sequence of elements $x_n\in X$ for $n\in\bbn$ will be denoted $(x_n)_n$.

In first-order formulas, we will use the notations $\exists\ux$ and $\forall\ux$ to denote $\exists x_1\exists x_2\ldots\exists x_n$ and $\forall x_1\forall x_2\ldots\forall x_n$, respectively.

Given first-order formulas $\ph_1(\ux),\ldots,\ph_m(\ux)$ we will use $\uph(\ux)$ to denote $\ph_1(\ux)\wedge\cdots\wedge\ph_m(\ux)$, and similarly given words $\sigma_1(\ux),\ldots,\sigma_m(\ux)\in\bbf(\ux):=\lang\ux\rang$ we will use $\usig(\ux)=1$ to denote $(\sigma_1(\ux)=1)\wedge\cdots\wedge(\sigma_m(\ux)=1)$.

More generally, if $\Sigma=\Sigma(\ux)\subseteq\bbf=\lang\ux\rang$ is a (possibly infinite) set of words in the variables $\ux$, we denote
\[\Sigma(\ux)=1\iff\sigma(\ux)=1\text{ for all }\sigma\in\Sigma\]
for a system of equations, and similarly
\[\Sigma(\ux)\neq 1\iff\sigma(\ux)\neq 1\text{ for all }\sigma\in\Sigma\]
for a system of inequations.

\begin{lem}\label{presentation}
If $G=\lang\us\mid\usig(\us)=1\rang$ is a finitely presented group, then $G\models\exists\ux\ (\usig(\ux)=1)$ as witnessed by $\ux=\us$.
\end{lem}

\begin{dfn}
Let $G$ be a group. The \emph{elementary theory} of $G$ is the set of first-order sentences in the language of groups modeled by $G$,
\[\Th(G):=\{\ph:G\models\ph\}.\]
Two groups $G$ and $H$ are said to be \emph{elementarily equivalent} if $\Th(G)=\Th(H)$.

Let $H$ be a subgroup of a group $G$. The inclusion of $H$ into $G$ is an \emph{elementary embedding}, denoted $H\elemb G$, if for any first-order formula $\ph(\ux)$ and any $\uh\in H$,
\[H\models\ph(\uh)\iff G\models\ph(\uh).\]
In particular, $\Th(H)=\Th(G)$.
\end{dfn}

\begin{dfn}
A group $G$ is \emph{commutative-transitive} if for all $x,y,z\in G\setminus\{1\}$ we have
\[[x,y]=[y,z]=1\implies[x,z]=1.\]
A subgroup $H$ of a group $G$ is \emph{malnormal} if $gHg^{-1}\cap H=\{1\}$ for all $g\in G\setminus H$. A group $G$ is \emph{CSA} if every maximal abelian subgroup of $G$ is malnormal.
\end{dfn}

\begin{lem}
CSA groups are commutative-transitive.
\end{lem}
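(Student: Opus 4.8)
The plan is to exploit the characterization that in a CSA group every non-trivial element lies in a maximal abelian subgroup which absorbs its entire centralizer; commutative-transitivity is then immediate. First I would fix $x,y,z\in G\setminus\{1\}$ with $[x,y]=[y,z]=1$, and use Zorn's lemma to produce a maximal abelian subgroup $M\leq G$ with $y\in M$: the poset of abelian subgroups of $G$ containing $\langle y\rangle$ is non-empty and closed under unions of chains, so a maximal element exists, and it is a maximal abelian subgroup of $G$.

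The key step is to show $x\in M$. Since $[x,y]=1$ we have $xyx^{-1}=y$, hence $y\in xMx^{-1}\cap M$. As $y\neq 1$, the intersection $xMx^{-1}\cap M$ is non-trivial; because $G$ is CSA and $M$ is maximal abelian, $M$ is malnormal, and the malnormality condition (in contrapositive form: if $gMg^{-1}\cap M\neq\{1\}$ then $g\in M$) forces $x\in M$. Running the identical argument with $z$ in place of $x$ gives $z\in M$ as well. Since $M$ is abelian and $x,z\in M$, we conclude $[x,z]=1$, which is exactly commutative-transitivity.

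I do not expect any real obstacle: the only points requiring care are the invocation of Zorn's lemma to guarantee the maximal abelian subgroup exists, and applying the malnormality hypothesis in the correct direction. None of the relative-hyperbolicity or JSJ machinery developed earlier in the paper is needed.
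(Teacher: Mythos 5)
Your proof is correct and uses essentially the same idea as the paper: take a maximal abelian subgroup $M$ through the central element $y$, then use malnormality of $M$ (the CSA hypothesis) to force the other commuting elements into $M$. The paper takes $M$ to contain both $x$ and $y$ from the start and applies malnormality once to place $z$ in $M$, whereas you take $M\ni y$ only and apply malnormality symmetrically to both $x$ and $z$; this is a cosmetic difference, not a different route.
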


\begin{proof}
Let $x,y,z\in G\setminus\{1\}$ and suppose that $[x,y]=[y,z]=1$. Let $M$ be the maximal abelian subgroup  of $G$ containing both $x$ and $y$. If $z\notin M$ then $y=zyz^{-1}\in zMz^{-1}\cap M=\{1\}$, which contradicts the assumption that $y\neq 1$.
\end{proof}

\begin{rmk}
Because CSA groups are commutative-transitive, the centralizer $Z_G(g)$ of an element $g\in G\setminus\{1\}$ is the maximal abelian subgroup of $G$ containing $g$.
\end{rmk}

\begin{lem}
A group $G$ is commutative-transitive if and only if
\[G\models\forall x,y,z\ ((y\neq 1)\wedge([x,y]=1)\wedge([y,z]=1))\implies([x,z]=1).\]
\end{lem}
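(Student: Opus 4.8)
The plan is to prove the two implications separately; the only real content is the observation that the extra hypotheses $x\neq 1$ and $z\neq 1$ appearing in the definition of commutative-transitivity are harmless, since the conclusion $[x,z]=1$ holds trivially whenever $x$ or $z$ equals the identity (the identity is central).

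For the forward direction, I would assume $G$ is commutative-transitive and take arbitrary $x,y,z\in G$ satisfying $y\neq 1$, $[x,y]=1$, and $[y,z]=1$, then split into cases. If $x=1$ or $z=1$ then $[x,z]=1$ because the identity commutes with everything; otherwise $x,y,z$ all lie in $G\setminus\{1\}$, so the definition of commutative-transitivity applies directly and gives $[x,z]=1$. In every case $[x,z]=1$, so $G$ models the displayed sentence.

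For the converse, I would assume $G$ models the displayed sentence and take $x,y,z\in G\setminus\{1\}$ with $[x,y]=[y,z]=1$. Since in particular $y\neq 1$, the sentence applies to the triple $(x,y,z)$ and yields $[x,z]=1$, which is exactly what the definition of commutative-transitivity requires. There is no genuine obstacle here; the only point to be careful about is the asymmetry between the definition (which restricts all three variables to $G\setminus\{1\}$) and the first-order sentence (which restricts only $y$), and the case analysis above is precisely what reconciles the two.
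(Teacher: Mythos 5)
Your argument is correct and is exactly the routine verification the paper leaves implicit (the lemma is stated without proof). The one point of substance — reconciling the definition's restriction of all three variables to $G\setminus\{1\}$ with the sentence's restriction of only $y$, using centrality of the identity — is precisely what you address, so there is nothing to add.
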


\begin{lem}\label{finite-index}
Let $G$ be a commutative-transitive group, let $H\elemb G$ be an elementarily embedded subgroup, and let $h\in H\setminus\{1\}$.
\begin{enumerate}[(1)]
\item
$Z_H(h)\leq Z_G(h)$.
\item
If $G$ is torsion-free and $[Z_G(h):Z_H(h)]<\infty$, then $Z_H(h)=Z_G(h)$.
\end{enumerate}
\end{lem}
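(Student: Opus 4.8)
The plan is to prove both parts by writing down first-order formulas that pin down the relevant centralizer data and then invoking the elementary embedding $H \elemb G$. For part (1), the statement $Z_H(h) \leq Z_G(h)$ is essentially immediate: if $x \in Z_H(h)$ then $[x,h] = 1$ holds in $H$, hence in $G$ (this is just a quantifier-free formula with parameters $x, h \in H$, or even simpler, just the inclusion $H \leq G$ being a group homomorphism), so $x \in Z_G(h)$. So part (1) needs no real logic at all — the elementary embedding hypothesis is overkill — but I would state it anyway for use in part (2).

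For part (2), suppose $[Z_G(h) : Z_H(h)] = k < \infty$. Since $G$ is torsion-free and commutative-transitive, $Z_G(h)$ is the maximal abelian subgroup of $G$ containing $h$ (by the Remark), and a finitely generated torsion-free abelian group containing a finite-index subgroup $Z_H(h)$; in particular $Z_H(h)$ is also nontrivial abelian. The key idea: if the index were $k > 1$, there would be some $g \in Z_G(h)$ with $g \notin Z_H(h)$ but $g^k \in Z_H(h)$ (take any coset representative; in a torsion-free abelian group of rank $r$ with finite-index subgroup, one can always find such a $g$ — indeed $g^{k!} \in Z_H(h)$ for a suitable bound, or more carefully, choose $g$ so that $g^m \in Z_H(h)$ for the minimal such $m > 1$). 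The plan is to express "$\exists g$ with $[g,h]=1$, $g$ is not a proper power in a controlled sense, yet some fixed power of $g$ equals a given element of $Z_H(h)$" as a first-order formula with parameters in $H$, and derive a contradiction.

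Concretely, I would argue as follows. Fix a generator-style element: since $Z_H(h)$ has finite index $k$ in $Z_G(h)$, pick $z \in Z_H(h) \setminus \{1\}$ which is "as divisible as possible" inside $Z_H(h)$ — more precisely, I will choose the first-order condition carefully. Consider the formula $\psi_n(w)$ asserting $\exists g\, (([g,w]=1) \wedge (g^n = w))$, i.e. "$w$ has an $n$-th root commuting with it." If the index is $k>1$, then for a suitable $z \in Z_H(h)\setminus\{1\}$ and suitable $n$, the element $z$ has an $n$-th root in $Z_G(h)$ (namely using the larger group) but no $n$-th root in $Z_H(h)$; by commutative-transitivity any $n$-th root of $z$ commuting with $z$ lies in $Z_G(h)$, so "$z$ has an $n$-th root commuting with it" is witnessed in $G$ but not in $H$. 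This contradicts $H \elemb G$ applied to $\psi_n(z)$ with the parameter $z \in H$. The one subtlety is guaranteeing that such $z$ and $n$ exist: this is where torsion-freeness enters. Since $Z_G(h) \cong \bbz^r$ and $Z_H(h)$ is a finite-index subgroup, the quotient $Z_G(h)/Z_H(h)$ is a nontrivial finite abelian group when $k > 1$, so it has an element of prime order $p$; lifting gives $g \in Z_G(h)$ with $g \notin Z_H(h)$ and $g^p \in Z_H(h)$, and then $z := g^p$ is a nontrivial element of $Z_H(h)$ with a $p$-th root in $Z_G(h)$. It remains to see $z$ has no $p$-th root in $Z_H(h)$: if it did, say $z = y^p$ with $y \in Z_H(h)$, then $(gy^{-1})^p = 1$ in the abelian group $Z_G(h)$, so by torsion-freeness $g = y \in Z_H(h)$, contradiction. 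Thus $\psi_p(z)$ separates $H$ from $G$.

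The main obstacle — really the only place requiring care — is the combinatorial selection of the right element $z$ and exponent $p$, and making sure the separating property is genuinely first-order: one must resist the temptation to quantify over the index $k$ or over "all roots," and instead fix a single concrete $z \in H$ and single exponent $p \in \bbn$ so that $\psi_p(w) := \exists g\,(([g,w]=1)\wedge(g^p = w))$ is a bona fide formula with one parameter. Everything else — that $Z_G(h)$ is free abelian of finite rank, that finite-index subgroups of $\bbz^r$ have finite nontrivial quotients with prime-order elements when proper, that torsion-freeness forces uniqueness of roots inside an abelian subgroup — is standard and I would dispatch it quickly.
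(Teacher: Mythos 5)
Your proposal is correct and takes the same route as the paper: pick $z\in Z_G(h)\setminus Z_H(h)$ with a suitable power landing in $Z_H(h)$, write a first-order existential statement asserting a commuting root of that power, transfer it through $H\elemb G$, and use torsion-freeness to conclude that $z$ (your $g$) must already have lain in $H$. Two small remarks. First, the paper's displayed formula $\exists x\ (x^n=z^n)\wedge([x,h]=1)\wedge([x,z]=1)$ carries $z$ itself as a parameter, but $z\notin H$ (since $Z_H(h)=Z_G(h)\cap H$), so the transfer as literally written is not licensed; this is harmless because the clause $[x,z]=1$ is redundant (it follows from $[x,h]=1$, $[z,h]=1$, $h\neq 1$, and commutative-transitivity), and your version, which uses only the parameter $z^n\in H$ and recovers the needed commutations from commutative-transitivity, is the cleaner way to phrase it. Second, your aside that $Z_G(h)\cong\bbz^r$ imports a finite-generation hypothesis that is not part of this lemma; nothing in your argument actually uses it --- you only need that a nontrivial finite abelian quotient has an element of prime order --- so you should drop that remark. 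The choice of a prime $p$ via the quotient versus the paper's choice of the minimal $n>1$ with $z^n\in Z_H(h)$ is cosmetic.
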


\begin{proof}
(1) is immediate from the definition of centralizers, so to prove (2) suppose that $Z_H(h)\lneq Z_G(h)$. Let $z\in Z_G(h)\setminus Z_H(h)$ and let $n>1$ be the minimal integer such that $z^n\in Z_H(h)$. Then
\[G\models\exists x\ (x^n=z^n)\wedge([x,h]=1)\wedge([x,z]=1),\]
so
\[H\models\exists x\ (x^n=z^n)\wedge([x,h]=1)\wedge([x,z]=1)\]
and hence there exists $y\in Z_H(h)$ such that $y^n=z^n$ and $[y,z]=1$. Then $1=y^nz^{-n}=(yz^{-1})^n$, so because $G$ is torsion-free we must have $yz^{-1}=1$ and $z=y\in Z_H(h)$, which is a contradiction.
\end{proof}

\begin{lem}\label{ranks}
Suppose that $G$ is a torsion-free commutative-transitive group with all abelian subgroups finitely generated. For $r\geq 1$, there exist first-order formulas $\cz_r(x)$ such that for any $g\in G\setminus\{1\}$, the centralizer $Z_G(g)$ is a free abelian group of rank $r$ if and only if $G\models\cz_r(g)$.
\end{lem}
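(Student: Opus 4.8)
The plan is to express ``$Z_G(g)$ is free abelian of rank $r$'' as a first-order property of $g$. Since $G$ is torsion-free, commutative-transitive, and has all abelian subgroups finitely generated, the centralizer $Z_G(g)$ of any $g\neq 1$ is a maximal abelian subgroup, hence a finitely generated torsion-free abelian group, i.e. $\bbz^k$ for some $k\geq 1$. So I only need formulas that pin down the rank. First I would write down the auxiliary formula $C(x,y):=([x,y]=1)\wedge(x\neq 1)\wedge(y\neq 1)$, which by commutative-transitivity says precisely that $x,y$ lie in a common maximal abelian subgroup, i.e. $Z_G(x)=Z_G(y)$; more generally $x_1,\dots,x_k$ pairwise commuting and all nontrivial forces them all into the single maximal abelian subgroup $Z_G(g)$ once one of them commutes with $g$.

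The key step is to say ``$Z_G(g)$ contains $k$ elements that are independent modulo powers'' — equivalently $Z_G(g)\cong\bbz^k$ admits a rank-$k$ free part. Concretely, ``rank $\geq r$'' should be captured by
\[\exists x_1\cdots\exists x_r\ \Bigl(\bigwedge_{i} [x_i,g]=1\Bigr)\wedge\Bigl(\bigwedge_{i}x_i\neq 1\Bigr)\wedge\forall n_1\cdots\forall n_r\bigl(x_1^{n_1}\cdots x_r^{n_r}=1\to \text{each }n_i=0\bigr),\]
except that first-order logic cannot quantify over the exponents $n_i$. This is the main obstacle. The standard way around it, which I would adopt, is to express $\bbz^k$-ness via \emph{divisibility obstructions and a complement condition} rather than literal linear independence: in a finitely generated torsion-free abelian group $\bbz^k$, one can detect the rank by the structure of the quotients $A/pA$ for a fixed prime $p$ — $|A/pA|=p^k$ — and ``$A/pA$ has exactly $p^k$ elements'' is first-order expressible using the commuting relation to name the elements of $A=Z_G(g)$ and the predicate ``$a\in pA$'', namely $\exists b\,(b\in Z_G(g))\wedge(b^p=a)$. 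So $\cz_r(x)$ would assert: $x\neq 1$; there exist $y_1,\dots,y_{p^r}$ pairwise commuting, commuting with $x$, pairwise inequivalent modulo $p$-th powers inside the centralizer; and every element commuting with $x$ is congruent mod $p$-th powers to one of the $y_j$. Torsion-freeness plus finite generation of abelian subgroups guarantees $Z_G(x)\cong\bbz^k$, and then $|Z_G(x)/pZ_G(x)|=p^k$, so this formula holds iff $k=r$.

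I would then check the two directions: if $G\models\cz_r(g)$ then $Z_G(g)$, being a finitely generated torsion-free abelian group with $|Z_G(g)/pZ_G(g)|=p^r$, must be $\bbz^r$; conversely if $Z_G(g)\cong\bbz^r$ then $Z_G(g)/pZ_G(g)\cong(\bbz/p)^r$ has $p^r$ elements and a transversal exists, witnessing the formula. The one delicate point is making sure the ``name the elements of the centralizer'' trick is uniform — i.e. that quantifiers ranging over all of $G$ but guarded by $[\,\cdot\,,x]=1$ genuinely range over $Z_G(x)$, which is exactly where commutative-transitivity is used — and that the finitely-many witnesses $y_j$ and the universally-quantified ``every commuting element'' clause correctly encode the finite quotient. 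Everything else is a routine translation, and the formulas $\cz_r$ are visibly independent of $g$, as required.
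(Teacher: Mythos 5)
Your proposal is correct and rests on the same key idea as the paper's proof: detect $\rank Z_G(g)$ by counting the finite quotient $Z_G(g)/pZ_G(g)$, which is first-order expressible because ``$w$ commutes with $g$'' picks out exactly $Z_G(g)$ (using commutative transitivity to guarantee this is an abelian group, hence $\bbz^k$), and ``$w\in pZ_G(g)$'' is the divisibility condition $\exists z\,([z,g]=1)\wedge(z^p=w)$. The only difference is in the encoding: you pin down $|Z_G(g)/pZ_G(g)|=p^r$ directly by exhibiting $p^r$ pairwise-incongruent coset representatives plus a completeness clause, whereas the paper fixes $p=2$, expresses ``$\rank\leq r$'' by producing just $r$ elements $t_1,\dots,t_r$ whose $\{0,1\}$-products form a transversal modulo squares, and then takes the conjunction of that with the negation of ``$\rank\leq r-1$''. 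The paper's version is more economical (a formula of length polynomial in $r$ rather than exponential), but both are valid first-order sentences and both verify the same group-theoretic fact; your argument would compile with no change in substance.
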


\begin{proof}
Note that the centralizer $Z_G(g)$ is a free abelian group because $G$ is torsion-free, and that, for all $r,s\geq 1$,
\[\bbz^r\models\exists t_1,\ldots,t_s\forall x\exists y\ \bigvee_{(\epsilon_i)\in\{0,1\}^s} x=y^2t_1^{\epsilon_1}\cdots t_s^{\epsilon_s}\]
if and only if $r\leq s$. This follows because the formula states that $H_1(\bbz^r;\bbz/2\bbz)$ contains at most $2^s$ elements, and hence we may use this formula to distinguish maximal free abelian subgroups of different ranks.

Then we may define the formula
\begin{align*}
	\tilde{\cz}_r(g)\ :\
		&\exists t_1,\ldots,t_r\forall x\ \left(\bigwedge_{1\leq i\leq r}[g,t_i]=1\right)\\
		&\wedge\left(([g,x]=1)\implies \left(\exists y\ ([g,y]=1)\wedge\left(\bigvee_{(\epsilon_i)\in\{0,1\}^r} x=y^2t_1^{\epsilon_1}\cdots t_r^{\epsilon_r}\right)\right)\right)
	\end{align*}
so that $G\models\tilde{\cz}_r(g)$ if and only if $\rank(Z_G(g))\leq r$. Define the formulas
\begin{align*}
\cz_1(g)\ &:\ \tilde{\cz}_1(g)\wedge(g\neq 1)\\
\cz_r(g)\ &:\ \neg\tilde{\cz}_{r-1}(g)\wedge\tilde{\cz}_r(g)\wedge(g\neq 1) & (r>1)
\end{align*}
Then $G\models\cz_r(g)$ if and only if $\rank(Z_G(g))=r$.

\end{proof}\begin{lem}\label{max-abelian}
Suppose that $G$ is a torsion-free commutative-transitive group with all abelian subgroups finitely generated and finitely many conjugacy classes of maximal non-cyclic abelian subgroups. There exists a first-order formula $\ca_G(\ux)$ such that $G\models\ca_G(\ua)$ if and only if $\{Z_G(\ua)\}$ forms a complete set of conjugacy representatives for the maximal non-cyclic abelian subgroups of $G$ with $1<\rank(Z_G(a_i))\leq\rank(Z_G(a_{i+1}))$ for all $i$. In particular, $G\models\exists\ux\ \ca_G(\ux)$.
\end{lem}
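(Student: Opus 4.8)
The plan is to construct $\ca_G$ explicitly from the formulas $\cz_r$ of Lemma~\ref{ranks}. Fix representatives $M_1,\dots,M_n$ of the (finitely many, by hypothesis) conjugacy classes of maximal non-cyclic abelian subgroups of $G$, ordered so that $r_i:=\rank(M_i)$ satisfies $2\le r_1\le\cdots\le r_n$; this is possible because each $M_i$, being a finitely generated torsion-free abelian group, is free of finite rank, and is non-cyclic, so $r_i\ge 2$. Finiteness of $n$ is precisely what makes the tuple $\ux=(x_1,\dots,x_n)$ and all the conjunctions and disjunctions below finite, and finite generation of abelian subgroups is what makes the formulas $\cz_{r_i}$ available.

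The key observation is that in a commutative-transitive group, conjugacy of the relevant subgroups is first-order expressible in terms of chosen non-trivial elements: if $a,b\in G\setminus\{1\}$ and $g\in G$, then $g\,Z_G(a)\,g^{-1}=Z_G(b)$ if and only if $[gag^{-1},b]=1$. Indeed $g\,Z_G(a)\,g^{-1}=Z_G(gag^{-1})$ is, by the Remark following commutative-transitivity, the unique maximal abelian subgroup containing $gag^{-1}$; and if $[gag^{-1},b]=1$ with both elements non-trivial, then commutative-transitivity forces $Z_G(gag^{-1})=Z_G(b)$. With this in hand I would set
\[
\ca_G(x_1,\dots,x_n)\ :\ \bigwedge_{i=1}^{n}\cz_{r_i}(x_i)\ \wedge\ \bigwedge_{1\le i<j\le n}\forall g\,\bigl([g x_i g^{-1},x_j]\neq 1\bigr)\ \wedge\ \forall y\,\Bigl(\bigl(y\neq 1\wedge\neg\cz_1(y)\bigr)\implies\exists g\,\bigvee_{i=1}^{n}[g y g^{-1},x_i]=1\Bigr),
\]
where $\neg\cz_1(y)$ together with $y\neq 1$ says exactly that $Z_G(y)$ is a (necessarily maximal) non-cyclic abelian subgroup, since a non-trivial element has centralizer of rank at least $1$.

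Then I would verify both directions. If $G\models\ca_G(\ua)$, the first conjunct forces each $a_i\neq 1$ and $\rank(Z_G(a_i))=r_i\ge 2$, so each $Z_G(a_i)$ is a maximal non-cyclic abelian subgroup with $1<\rank(Z_G(a_i))\le\rank(Z_G(a_{i+1}))$; the second conjunct, via the key observation, says the $Z_G(a_i)$ are pairwise non-conjugate; and the third says every maximal non-cyclic abelian subgroup $Z_G(y)$ is conjugate to some $Z_G(a_i)$. Hence $\{Z_G(\ua)\}$ is a complete, suitably ordered set of conjugacy representatives. Conversely, for each $i$ pick any non-trivial $a_i\in M_i$; commutative-transitivity gives $Z_G(a_i)=M_i$, and one reads off directly that $G\models\ca_G(a_1,\dots,a_n)$, which also yields the final assertion $G\models\exists\ux\,\ca_G(\ux)$. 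The content is entirely in the key observation and in the two bookkeeping facts that $n$ is finite (so the formula exists at all) and that the ranks of maximal non-cyclic abelian subgroups are finite (so the $\cz_{r_i}$ make sense); I do not expect a serious obstacle beyond ensuring these hypotheses are invoked in the right places.
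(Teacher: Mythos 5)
Your proposal is correct and takes essentially the same approach as the paper: you construct $\ca_G(\ux)$ from the rank formulas $\cz_r$ with one conjunct fixing the ranks, one conjunct expressing pairwise non-conjugacy of the $Z_G(a_i)$, and one conjunct expressing that every maximal non-cyclic abelian subgroup is conjugate to some $Z_G(a_i)$. The only differences are superficial (implication written as disjunction, quantifiers renamed and pulled through finite conjunctions), and you make explicit the commutative-transitivity observation that the paper leaves implicit.
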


\begin{proof}
Let $Z_1,\ldots,Z_n\leq G$ be a maximal set of non-conjugate, maximal non-cyclic abelian subgroups of $G$. Each $Z_i$ is free abelian, so let $r_i=\rank(Z_i)>1$ and order these subgroups so that $r_i\leq r_{i+1}$ for all $i$. Define the formula
\begin{align*}
	\ca_G(\ux)\ :\
		&\left(\bigwedge_{1\leq i\leq n} \cz_{r_i}(x_i)\right)\wedge \left(\forall w\ \bigwedge_{1\leq i<j\leq n} [x_i,wx_jw^{-1}]\neq 1]\right)\\
		&\wedge\left(\forall y\ (y=1)\vee(\cz_1(y))\vee\left(\exists z\ \bigvee_{1\leq i\leq n} [y,zx_iz^{-1}]=1\right)\right)
	\end{align*}
where $\cz_r(x)$ is as in Lemma \ref{ranks}. Then $G\models\exists\ux\ \ca_G(\ux)$, where any solution $\ux=\ua$ with $a_i\in Z_i\setminus\{1\}$ satisfies this sentence. Furthermore, $G\models\ca_G(\ua)$ if and only if $\{Z_G(\ua)\}$ forms a complete set of conjugacy representatives for the maximal non-cyclic abelian subgroups of $G$ with $1<\rank(Z_G(a_i))\leq\rank(Z_G(a_{i+1}))$ for all $i$.
\end{proof}

\begin{lem}[{\cite[Lemma 2.2]{Groves05}}]
If $G$ is a toral relatively hyperbolic group, then all non-cyclic abelian subgroups of $G$ are finitely generated.
\end{lem}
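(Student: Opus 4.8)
The plan is to deduce finite generation directly from the peripheral structure that is built into the definition of a toral relatively hyperbolic group. First I would take a non-cyclic abelian subgroup $A\leq G$ and apply Zorn's lemma to the non-empty poset of abelian subgroups of $G$ containing $A$ (a nested union of abelian subgroups is again abelian), obtaining a maximal abelian subgroup $M$ with $A\leq M$.

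Next I would note that $M$ is non-cyclic, since it contains the non-cyclic group $A$ and subgroups of $\bbz$ are cyclic, and that for an abelian subgroup being maximal abelian and non-cyclic is the same thing as being a maximal non-cyclic abelian subgroup: a non-cyclic abelian subgroup properly containing $M$ would in particular be an abelian subgroup properly containing $M$, contradicting maximality. By the definition of a toral relatively hyperbolic group, $G$ is hyperbolic relative to a finite collection $\cp=\{P_1,\ldots,P_n\}$ of conjugacy representatives of its maximal non-cyclic abelian subgroups, so $M$ is conjugate to some $P_i\in\cp$.

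Finally, the definition of relative hyperbolicity requires each peripheral subgroup $P_i$ to be finitely generated, so $M$ is a finitely generated abelian group; since finitely generated abelian groups are Noetherian, the subgroup $A\leq M$ is finitely generated as well. (The same argument, together with the triviality of the cyclic case, shows that in fact every abelian subgroup of $G$ is finitely generated.)

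I do not anticipate a genuine obstacle: the real content — that there are only finitely many conjugacy classes of maximal non-cyclic abelian subgroups and that each is finitely generated — has been absorbed into the hypotheses, so the only point requiring care is the elementary identification of ``maximal abelian and non-cyclic'' with ``maximal non-cyclic abelian,'' after which the peripheral structure does all the work.
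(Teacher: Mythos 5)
Your proof is correct as a derivation from this paper's stated definitions, but it is worth flagging that the paper itself does not give a proof — it cites \cite[Lemma 2.2]{Groves05}, and your argument is almost certainly not the route taken there. You observe, correctly, that under the definition of ``toral relatively hyperbolic'' given here (torsion-free and hyperbolic relative to \emph{the} conjugacy representatives of \emph{all} maximal non-cyclic abelian subgroups, with peripheral subgroups required to be finitely generated), the conclusion is essentially built in: Zorn gives a maximal abelian $M\supseteq A$, maximality plus non-cyclicity of $A$ forces $M$ to be a maximal non-cyclic abelian subgroup, so $M$ is conjugate to a peripheral $P_i$ and hence is finitely generated free abelian, and $A\leq M$ is then finitely generated because finitely generated abelian groups are Noetherian. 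All of those steps are sound, and the elementary equivalence of ``maximal abelian and non-cyclic'' with ``maximal non-cyclic abelian'' is handled correctly. By contrast, Groves's definition takes a toral relatively hyperbolic group to be a torsion-free group hyperbolic relative to \emph{some} finite collection of finitely generated free abelian subgroups; under that weaker hypothesis the lemma has genuine content, and the proof must establish that every non-cyclic abelian subgroup is conjugate into a peripheral subgroup, typically by appealing to the classification of subgroups of relatively hyperbolic groups (an abelian subgroup contains no nonabelian free subgroup, so it cannot be hyperbolically embedded and must be elliptic, two-ended, or parabolic; being non-cyclic and torsion-free rules out the first two). So your argument is more elementary but buys this simplicity by leaning on a definition that already presupposes finitely many conjugacy classes of finitely generated maximal non-cyclic abelian subgroups, whereas the cited proof derives that structure from relative hyperbolicity. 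Both are valid in their respective settings; just be aware that if you were asked to prove the lemma with Groves's original definition, the substantive work you have correctly identified as ``absorbed into the hypotheses'' would reappear.
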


\begin{lem}[{\cite[Lemma 2.5]{Groves09}}]
Toral relatively hyperbolic groups are CSA and hence commutative-transative.
\end{lem}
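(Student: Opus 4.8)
The plan is to classify the maximal abelian subgroups of a toral relatively hyperbolic group $G$, show each is malnormal, and then invoke the lemma already proved that CSA groups are commutative-transitive. We may assume $G\neq\{1\}$, so that every maximal abelian subgroup is nontrivial (any $g\neq 1$ lies in some maximal abelian subgroup).

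First I would record two standard structural facts about a torsion-free group $G$ hyperbolic relative to a collection $\cp$ of (maximal non-cyclic) abelian subgroups, to be cited from the theory of relatively hyperbolic groups (see \cite{Groves05,Groves09} and the references therein): (i) every element of infinite order is either \emph{parabolic}, i.e.\ conjugate into some $P\in\cp$, or \emph{loxodromic}, and the centralizer of a loxodromic element is virtually cyclic, hence (by torsion-freeness) infinite cyclic; and (ii) the conjugates of the subgroups in $\cp$ form an almost malnormal family, which in the torsion-free case means: whenever $P,P'$ are conjugates of (possibly equal) members of $\cp$ with $P\cap P'\neq\{1\}$, then $P=P'$ --- so in particular each such $P$ is malnormal and equals its own normalizer.

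The second step is to deduce the classification. Let $M\leq G$ be maximal abelian. If $M$ contains a parabolic element $p$, then $M\leq Z_G(p)$; and for any $x\in Z_G(p)$ we have $p\in P\cap xPx^{-1}$ with $P\ni p$ peripheral, so $xPx^{-1}=P$ and $x\in P$ by (ii); hence $Z_G(p)=P$ is abelian and $M=P$ by maximality. Otherwise every nontrivial element of $M$ is loxodromic; choosing one such $\ell$ gives $M\leq Z_G(\ell)=\lang c\rang$ infinite cyclic, so $M=\lang c\rang$, and then $Z_G(c)=\lang c\rang$ as well (it is abelian and contains $M$). Thus every maximal abelian subgroup is either a conjugate of some $P\in\cp$, or the infinite cyclic centralizer $\lang c\rang$ of a loxodromic element $c$ with $Z_G(c)=\lang c\rang$.

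The third step is malnormality of each type. For a conjugate of some $P\in\cp$, this is exactly fact (ii). For $M=\lang c\rang$ with $c$ loxodromic and $Z_G(c)=\lang c\rang$: suppose $x\in G\setminus M$ with $xMx^{-1}\cap M\neq\{1\}$, so $c^k=xc^mx^{-1}$ for some $k,m\neq 0$. Taking centralizers gives $x\,Z_G(c^m)\,x^{-1}=Z_G(c^k)$; and for any $n\neq 0$ the group $Z_G(c^n)$ is infinite cyclic, contains $\lang c\rang$, and its generator commutes with $c$ (it commutes with $c^n$, and $c$ is a power of it), hence lies in $Z_G(c)=\lang c\rang$, forcing $Z_G(c^n)=\lang c\rang$. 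So both sides above equal $\lang c\rang$ and $x\in N_G(\lang c\rang)$, whence $xcx^{-1}=c^{\pm 1}$. If $xcx^{-1}=c$ then $x\in Z_G(c)=M$, a contradiction. If $xcx^{-1}=c^{-1}$ then $x^2\in Z_G(c)=\lang c\rang$, so $K:=\lang x,c\rang$ is torsion-free (a subgroup of $G$) and contains $\lang c\rang$ with index $2$, hence $K$ is virtually cyclic and torsion-free, therefore infinite cyclic, therefore abelian --- contradicting $xcx^{-1}=c^{-1}\neq c$. So no such $x$ exists, and $M$ is malnormal. Since all maximal abelian subgroups are malnormal, $G$ is CSA, hence commutative-transitive by the earlier lemma.

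The main obstacle is the loxodromic case, and specifically excluding the relation $xcx^{-1}=c^{-1}$: this is precisely where torsion-freeness is indispensable, through the fact that a torsion-free virtually cyclic group is infinite cyclic. The remaining ingredients --- almost malnormality of the peripheral structure and the description of loxodromic centralizers --- are standard facts about relatively hyperbolic groups that will be cited rather than reproved.
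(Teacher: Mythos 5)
The paper offers no proof of this lemma at all: it is imported verbatim as \cite[Lemma 2.5]{Groves09}, so there is no internal argument to compare against. What you have written is, in effect, a proof of the cited result itself, and it follows the standard route (essentially the one underlying Groves's lemma): classify maximal abelian subgroups as either conjugates of peripheral subgroups or infinite cyclic centralizers of loxodromic elements, check malnormality in each case using almost malnormality of the peripheral structure and the virtually cyclic structure of loxodromic centralizers, and use torsion-freeness to kill the $xcx^{-1}=c^{-1}$ possibility. The argument is correct, with two small points you should make explicit when citing. First, state fact (ii) in its full standard form (for $g\notin P$ the intersection $gPg^{-1}\cap P$ is finite, hence trivial by torsion-freeness): the weaker statement ``distinct conjugates of peripherals intersect trivially'' only yields $xPx^{-1}=P$, i.e.\ $x\in N_G(P)$, and the conclusion $x\in P$ that you use in the parabolic case needs the self-normalizing/malnormal version, which is indeed what the literature provides. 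Second, your claim that $Z_G(c^n)$ is infinite cyclic presupposes that $c^n$ is loxodromic, i.e.\ that nonzero powers of loxodromic elements are loxodromic; this is standard, and also follows directly from (ii), since $c^n\in P$ together with $c c^n c^{-1}=c^n\in cPc^{-1}\cap P$ would force $c\in P$, contradicting that $c$ is loxodromic. With those two citations pinned down, your proof is complete; the paper's choice to cite rather than reprove buys brevity, while your version makes visible exactly which relative hyperbolicity inputs (peripheral malnormality, loxodromic centralizers, torsion-freeness) the CSA property rests on.
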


\begin{dfn}
A subgroup $H$ of a toral relatively hyperbolic group $G$ is \emph{torally complete} in $G$ if for every maximal non-cyclic abelian subgroup $Z\leq G$ there exists $g\in G$ such that $gZg^{-1}\leq H$.
\end{dfn}

\begin{lem}
Let $H$ be a subgroup of a toral relatively hyperbolic group $G$. If $H\elemb G$, then $H$ is torally complete in $G$.
\end{lem}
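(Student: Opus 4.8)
The plan is to show that if $H \elemb G$, then for every maximal non-cyclic abelian subgroup $Z \leq G$ some conjugate of $Z$ lies in $H$. The key tool is Lemma \ref{max-abelian}, which provides a first-order formula $\ca_G(\ux)$ expressing that the centralizers $\{Z_G(a_i)\}$ form a complete set of conjugacy representatives of the maximal non-cyclic abelian subgroups of $G$, ordered by rank. Since $G$ is toral relatively hyperbolic, it is CSA, torsion-free, commutative-transitive, has all abelian subgroups finitely generated (by the cited lemmas of Groves), and has only finitely many conjugacy classes of maximal non-cyclic abelian subgroups; so the hypotheses of Lemma \ref{max-abelian} are met and $G \models \exists \ux\ \ca_G(\ux)$.

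First I would invoke $H \elemb G$ to pull this sentence down: $H \models \exists \ux\ \ca_G(\ux)$, so there is a tuple $\ua \in H$ with $H \models \ca_G(\ua)$, and then by elementarity $G \models \ca_G(\ua)$ as well. From $G \models \ca_G(\ua)$ we read off, using the characterization in Lemma \ref{max-abelian}, that $\{Z_G(a_i)\}$ is a complete set of conjugacy representatives for the maximal non-cyclic abelian subgroups of $G$. Now fix a maximal non-cyclic abelian subgroup $Z \leq G$. Then $Z$ is conjugate in $G$ to some $Z_G(a_i)$, so it suffices to show that some conjugate of $Z_G(a_i)$ is contained in $H$. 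Since $a_i \in H$, it is enough to show $Z_G(a_i) \leq H$, i.e.\ that $Z_H(a_i) = Z_G(a_i)$.

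The remaining point is the equality of centralizers. By Lemma \ref{finite-index}(1), $Z_H(a_i) \leq Z_G(a_i)$. For the reverse inclusion I would use that $H \models \ca_G(\ua)$ forces, via the $\cz_r$ formulas of Lemma \ref{ranks} which are part of $\ca_G$, that $\rank(Z_H(a_i)) = r_i = \rank(Z_G(a_i))$; since both are free abelian of the same finite rank, $Z_H(a_i)$ has finite index in $Z_G(a_i)$, and then Lemma \ref{finite-index}(2) (using that $G$ is torsion-free) gives $Z_H(a_i) = Z_G(a_i)$. The main subtlety — the step I would be most careful about — is confirming that $H \models \ca_G(\ua)$ genuinely pins down the rank of $Z_H(a_i)$ rather than just an upper bound: this rests on the clause $\neg\tilde{\cz}_{r-1} \wedge \tilde{\cz}_r$ inside each $\cz_{r_i}$, which by Lemma \ref{ranks} holds over $H$ exactly when $\rank(Z_H(a_i)) = r_i$, so the argument goes through.
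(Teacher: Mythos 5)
Your proof is correct and follows essentially the same route as the paper's: transfer the sentence $\exists\ux\,\ca_G(\ux)$ from $G$ to $H$ via the elementary embedding, obtain a witnessing tuple $\ua\in H$, transfer $\ca_G(\ua)$ back to $G$ to see that the $Z_G(a_i)$ are conjugacy representatives, and then use the rank-pinning clauses of $\ca_G$ (via Lemma \ref{ranks}) together with Lemma \ref{finite-index}(2) to conclude $Z_H(a_i)=Z_G(a_i)$. Your explicit remark that the $\neg\tilde{\cz}_{r-1}\wedge\tilde{\cz}_r$ clause forces equality of ranks (not merely an upper bound) is exactly the point the paper compresses into ``by the elementary embedding,'' so your elaboration is a faithful unpacking of the same argument.
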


\begin{proof}
Suppose that $H\neq G$. The subgroup $H$ is also torsion-free, and further it is commutative-transitive because this property can be determined with first-order logic. Let $\ca_G(\ux)$ as in Lemma \ref{max-abelian}. Then $H\models\exists\ux\ \ca_G(\ux)$, so let $\ua\in H$ such that $H\models\ca_G(\ua)$. Then $G\models\ca_G(\ua)$ by the elementary embedding, so $\{Z_G(\ua)\}$ is a complete set of conjugacy class representatives for the maximal non-cyclic abelian subgroups of $G$. We also have that $\rank(Z_H(a_i))=\rank(Z_G(a_i))$ by the elementary embedding, so $[Z_G(a_i):Z_H(a_i)]<\infty$ and hence $Z_H(a_i)=Z_G(a_i)$ by Lemma \ref{finite-index}. Thus $H$ is torally complete in $G$.
\end{proof}


%
%
%
%
%

\section{\texorpdfstring{$\Gamma$}{Gamma}-limit groups and \texorpdfstring{$\bbr$}{R}-trees}
For more information on limiting actions on $\bbr$-trees, see \cite{Bestvina02,GL17}.

\begin{dfn}
Let $G$ be a finitely generated group and let $\Gamma$ be a toral relatively hyperbolic group. A sequence $(f_n)_n$ in $\Hom(G,\Gamma)$ is \emph{stable} if for all $g\in G$ either (i) $g\in\ker(f_n)$ for all but finitely many $n$; or (ii) $g\notin\ker(f_n)$ for all but finitely many $n$. The \emph{stable kernel} of a stable sequence is
\[\stabker(f_n):=\{g\in G:g\in\ker(f_n)\text{ for all but finitely many }n\}.\]
A finitely generated group $L$ is a \emph{$\Gamma$-limit group} if there is a finitely generated group $G$ and a stable sequence $(f_n)_n$ in $\Hom(G,\Gamma)$ such that $L\cong G/\stabker(f_n)$.
\end{dfn}

\begin{dfn}
An \emph{$\bbr$-tree} is a geodesic metric space in which any two distinct points $a,b\in T$ are connected by a unique topological arc $[a,b]$. A non-empty subtree of $T$ is \emph{degenerate} if it is a single point, and otherwise it is \emph{non-degenerate}.

A \emph{tripod} in an $\bbr$-tree $T$ is a union of three arcs $[a,b]\cup[b,c]\cup[c,a]$ defined by three distinct points $a,b,c\in T$ which contains a \emph{branch point} $x\in T$ such that $T\setminus\{x\}$ consists of three connected components.

An action of a finitely generated group $G$ on an $\bbr$-tree $T$ is \emph{superstable} if, for any two non-degenerate arcs $[a,b]\subsetneq[c,d]$ such that $\Stab_G[c,d]\lneq\Stab_G[a,b]$, we have that $\Stab_G[c,d]=\{1\}$.
\end{dfn}

\begin{dfn}
Let $G$ be a group with a subgroup $H$ and let $\Gamma$ be a group with a fixed embedding $i:H\emb\Gamma$. A morphism $f:G\to\Gamma$ \emph{fixes} $H$ if $f|_H=i$. We also define
\[\Hom_H(G,\Gamma):=\{f\in\Hom(G,\Gamma):f|_H=i\}.\]
\end{dfn}

\begin{dfn}
Let $G=\lang S\rang$ be a finitely generated group (with $S$ finite) and let $\Gamma$ be a toral relatively hyperbolic group which acts by isometries on a pointed metric space $(X,d,*)$. The \emph{length} of a morphism $f:G\to\Gamma$ is defined to be
\[|f|:=\max_{s\in S}d(*,f(s)\cdot *).\]
\end{dfn}

\begin{dfn}
Let $G=\lang S\rang$ be a finitely generated group (with $S$ finite) which is freely indecomposable relative to a subgroup $H$, and let $\Gamma$ be a toral relatively hyperbolic group with a fixed embedding $i:H\emb\Gamma$. Let $(X,d)$ be the Cayley graph of $G$ with respect to $S$ with the word metric, and denote the \emph{ball in $G$ of radius $r$ (with respect to $S$)} by
\[B_G(r):=\{g\in G\subseteq X:d(1,g)\leq r\}\]

We say that a sequence $(f_n)_n$ in $\Hom(G,\Gamma)$ \emph{fixes $H$ in the limit} if for all $r\geq 1$ there exists $N_r$ such that $f_n$ coincides with $i$ on $B_G(r)\cap H$ for all $n\geq N_r$.
\end{dfn}

\begin{thm}[Compare {\cite[Theorem 6.5]{Groves09}}]\label{converge}
Let $G$ be a finitely generated group, let $\Gamma$ be a toral relatively hyperbolic group, and let $H$ be a non-abelian subgroup of $G$ with a fixed embedding $i:H\emb\Gamma$.

Suppose that $(f_n)_n$ is a sequence of distinct morphisms in $\Hom(G,\Gamma)$ fixing $H$ in the limit. Then there is a stable subsequence $(h_n)_n$ of $(f_n)_n$ and an $\bbr$-tree $T$ equipped with an isometric $G$-action with no global fixed point which have the following properties, where $K$ is the kernel of this action and $L=G/K$:
\begin{enumerate}[(1)]
\item
$H\cap K=\{1\}$, and in particular we may consider $H\leq L$.
\item
If $[a,b]$ is a non-degenerate arc in $T$, then $\Stab_L[a,b]$ is abelian.
\item
If $T$ is isometric to a real line, then $h_n(G)$ is free abelian for all but finitely many $n$.
\item
$\stabker(h_n)\leq K$.
\item
If $g\in G$ stabilizes a tripod in $T$, then $g\in\stabker(h_n)$.
\item
If $[a,b]\subseteq[c,d]$ are non-degenerate arcs in $T$ and $\Stab_L[c,d]\neq\{1\}$, then $\Stab_L[a,b]=\Stab_L[c,d]$, and in particular the action of $L$ on $T$ is superstable.
\item
$L$ is torsion free.
\item
$H$ fixes the basepoint of $T$.
\end{enumerate}
\end{thm}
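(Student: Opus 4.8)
The plan is to run a Bestvina--Paulin type construction adapted to the relatively hyperbolic setting, following \cite[Theorem 6.5]{Groves09}. Since $\Gamma$ is hyperbolic relative to $\cp$, it acts isometrically on its $\delta$-hyperbolic coned-off Cayley graph $\tilde X$; composing with $f_n$ gives an action of $G$ on $\tilde X$, and we rescale the metric by $\lambda_n:=\inf_{x\in\tilde X}\max_{s\in S}d(x,f_n(s)\cdot x)$, choosing basepoints $*_n$ realizing this infimum up to a bounded factor. First I would argue $\lambda_n\to\infty$: otherwise, passing to a subsequence, the restrictions $f_n|_H$ are eventually the \emph{fixed} embedding $i$ (since the $f_n$ fix $H$ in the limit) while the $f_n(s)$ move a bounded region by a bounded amount, leaving only finitely many possibilities and contradicting that the $f_n$ are pairwise distinct; the non-properness of the coned-off action forces one to be careful here about cone points, as in \cite{Groves09}. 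Passing to a further subsequence $(h_n)_n$, the pointed $G$-spaces $(\tilde X,\tfrac1{\lambda_n}d,*_n)$ converge in the equivariant Gromov--Hausdorff topology to a pointed $\bbr$-tree $(T,*)$ carrying an isometric $G$-action, and the choice of basepoints ensures no global fixed point. Let $K$ be the kernel of this action and $L=G/K$.

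Several properties are then obtained from the standard machinery. Property (4) is immediate: if $g\in\stabker(h_n)$ then $h_n(g)=1$ for large $n$, so $g$ acts trivially on $T$. Properties (5) and (7) follow from the usual tripod argument: if $g$ stabilizes a tripod of $T$, then for large $n$ the element $h_n(g)$ moves the endpoints of a $\delta$-tripod in $\tilde X$ with legs of length $\gtrsim\lambda_n$ by only $o(\lambda_n)$; $\delta$-thinness lets one slide along the legs to see that $h_n(g)$ almost-fixes segments of unbounded length, and the acylindricity of the $\Gamma$-action on $\tilde X$ bounds the number of such elements independently of the length, so comparing the powers $h_n(g)^k$ and using that $\Gamma$ is torsion-free forces $h_n(g)=1$; hence $g\in\stabker(h_n)$, and the same circle of ideas (a finite-order element of $L$ would have to fix a point and ultimately produce torsion in $\Gamma$) gives that $L$ is torsion-free.

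For properties (1) and (8) the key point is that $H$ is non-abelian, so $i(H)$ is neither contained in a parabolic nor virtually cyclic, hence is a non-elementary subgroup of $\Gamma$; in particular each $i(h)$ has a fixed translation length in $\tilde X$, which vanishes after rescaling, so every element of $H$ acts elliptically on $T$, and since $(f_n)_n$ fixes $H$ in the limit the rescaled $H$-action is trivial enough that the finitely generated group $H$ fixes a common point, which we take to be the basepoint, giving (8). For (1), suppose $1\neq h\in H\cap K$. Then $h$ acts trivially on $T$, so every $\Gamma$-conjugate of $i(h)$ moves $*_n$ sublinearly (the $G$-orbit of $*_n$ is dense in $T$ in the limit); but $i(h)$ is loxodromic or parabolic in $\tilde X$, and choosing $g\in G$ with $d(*_n,f_n(g)\cdot *_n)\gtrsim\lambda_n$ away from the relevant axis or cone point (possible since there is no global fixed point) makes $f_n(g)^{-1}i(h)f_n(g)$ move $*_n$ by $\gtrsim\lambda_n$, a contradiction.

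Finally, properties (2), (3) and (6) rest on the structural input for toral relatively hyperbolic groups, and this is where I expect the real work to be: an element $\gamma\in\Gamma$ that almost-fixes a sufficiently long segment of $\tilde X$ must lie either in a parabolic subgroup (when the segment runs along a parabolic coset) or in the cyclic maximal loxodromic subgroup determined by the segment. Two elements of $\Stab_L[a,b]$ therefore lift to elements of $\Gamma$ almost-fixing a common long segment, hence lying in a common abelian subgroup, so their commutator almost-fixes an even longer segment; iterating and passing to the limit yields $[g_1,g_2]=1$ in $L$, proving (2). The same dichotomy promotes ``almost-fixes $[a_n,b_n]$'' to ``almost-fixes $[c_n,d_n]$'' as soon as some nontrivial element already fixes the longer arc (the parabolic coset or loxodromic axis being pinned down by the shorter arc), giving (6) and hence superstability. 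For (3), if $T$ is a line then $h_n(G)$ acts on $\tilde X$ with orbits quasi-isometric to a line for all but finitely many $n$, which forces $h_n(G)$ to be elementary, hence free abelian. The genuinely delicate issues throughout are the behaviour near cone points and the uniform (``for all but finitely many $n$'') statements; these are precisely what is handled in \cite[Theorem 6.5]{Groves09} and its supporting lemmas, whose arguments we follow.
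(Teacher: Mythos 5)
Your high-level plan (Bestvina--Paulin in the relatively hyperbolic setting, following Groves) is the right one, but you have misplaced where the actual work of this theorem lies, and as a result your argument has a gap exactly at the point the paper flags as the novel issue.

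First, the paper does \emph{not} reprove properties (2)--(7): it constructs the tree as in \cite[\S 4, \S 6.1]{Groves09} and cites \cite[Theorem 6.5]{Groves09} for (2)--(7) wholesale. The new content is (1), (8), and the absence of a global fixed point. You instead present (2), (3), (6) as ``the real work'' and treat (1), (8) as routine; that inversion is what leads you into trouble.

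Second, and more substantively, you run the construction on the coned-off Cayley graph $\tilde X$ and you rescale by $\lambda_n=\inf_{x}\max_{s\in S}d(x,f_n(s)\cdot x)$, choosing basepoints $*_n$ that approximately realize this infimum. This is the standard Bestvina--Paulin normalization, and it is precisely what the paper says one \emph{cannot} do here: ``the non-existence of a global fixed point is not immediate from \cite[Lemma 6.2]{Groves09} because in order to keep $H$ fixed in the limit, we cannot use conjugation to ensure that the basepoint is centrally located.'' Choosing $*_n$ to minimize displacement is the same as conjugating the morphisms, and once you do that the $i(H)$-orbit of $*_n$ may grow linearly in $\lambda_n$, so there is no reason for $H$ to fix the limiting basepoint. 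Your patch for (8) --- ``each element of $H$ acts elliptically, and since $H$ is fixed in the limit the finitely generated group $H$ fixes a common point'' --- does not close this gap: $H$ is not assumed finitely generated; ``every element elliptic'' does not yield a global fixed point for an $\bbr$-tree action without further input; and ``$H$ fixed in the limit'' is a statement about $f_n$ agreeing with $i$ on balls of $G$, not a statement controlling displacements of a basepoint that you have just allowed to drift. The paper's fix is to keep the basepoint at $1\in\Gamma$ inside the proper space $X$ of \cite[\S 4]{Groves09} and rescale by $\delta_n:=|f_n|=\max_{s\in S}d_0(1,f_n(s)\cdot 1)$; then for $g\in H$ one has $d_n(*_n,f_n(g)\cdot*_n)=d_0(1,i(g)\cdot 1)/\delta_n\to 0$ and (8) is a two-line computation, while ``no global fixed point'' is recovered afterwards from (2), (3) and the non-abelianness of $H$. (Relatedly, the limiting construction is carried out in the proper space of \cite[\S 4]{Groves09}, not the coned-off Cayley graph, which is not proper and collapses the parabolics.)

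Third, your argument for (1) is unnecessarily delicate and, as written, hand-wavy (``choosing $g\in G$ with $d(*_n,f_n(g)\cdot *_n)\gtrsim\lambda_n$ away from the relevant axis or cone point''). The paper's route is short: $H\cap\stabker(h_n)=\{1\}$ is immediate from $H$ being fixed in the limit; since $T$ is not a line (by (3) and $H$ non-abelian) there is a tripod, any element of $K$ stabilizes it, so $K\le\stabker(h_n)$ by (5), whence $H\cap K=\{1\}$. I would encourage you to adopt the paper's fixed-basepoint normalization and the tripod argument for (1); both are forced once you notice that $H$ must be kept pinned.
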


\begin{proof}
The $\bbr$-tree $T$ and the isometric $G$-action are constructed as in \cite{Groves09}. In particular, parts (2)--(7) follow from \cite[Theorem 6.5]{Groves09}. It will remain to show that $H\cap K=\{1\}$, that $H$ fixes the basepoint, and that the is no global fixed point. Note that the non-existence of a global fixed point is not immediate from \cite[Lemma 6.2]{Groves09} because in order to keep $H$ fixed in the limit, we cannot use conjugation to ensure that the basepoint is centrally located. For more on Gromov-Hausdorff limits of pointed spaces, see \cite{Bestvina02}.

Let $(X,d_0,1)$ be the metric space constructed as in \cite[\S 4]{Groves09} in which $\Gamma$ isometrically embeds with basepoint corresponding to $1\in\Gamma$, and for each $n$ define the scaling factors $\delta_n:=|f_n|$. Define a sequence of pointed spaces $(X_n,d_n,*_n)$ with $X_n=X$, $d_n=d_0/\delta_n$, $*_n=1$, and a $G$-action $f_n(g)\cdot x$. A subsequence of these actions converge in the pointed Gromov-Hausdorff topology to a space $(X_\infty,d,*)$ upon which $G$ acts with no global fixed point. Let $C_\infty$ be the subspace of $X_\infty$ consisting of the union of all geodesic segments $[*,g\cdot *]$ and all flats containing geodesic triangles $\Delta(g\cdot *,g'\cdot *,g''\cdot *)$ for $g,g',g''\in G$. After projecting from the flats of $C_\infty$ as in \cite[\S 6.1]{Groves09} we obtain an $\bbr$-tree $T\subseteq C_\infty$ containing the basepoint $*$ and upon which $G$ acts isometrically with no global fixed point by \cite[Lemma 6.2]{Groves09}, and if $K$ is the kernel of this action we obtain an action of $L=G/K$ on $T$.

Let $g\in H$. Then
\[d_n(*_n,f_n(g)\cdot *_n)=d_n(*_n,i(g)\cdot 1)\text{ for }n\gg 1\text{, so}\]
\[d_n(*_n,f_n(g)\cdot *_n)=d_n(*_n,i(g)\cdot 1)=d_0(1,i(g)\cdot 1)/\delta_n\to 0=d(*,g\cdot *)\]
and hence $g$ fixes $*$ in $T$ for all $g\in H$. Thus $H$ fixes $*$.

It is clear that $H\cap\stabker(h_n)=\{1\}$ because $H$ is fixed in the limit, so suppose there exists $g\in K\setminus\stabker(h_n)$. But then $g$ cannot stabilize a tripod by (5), so $h\notin K$. Thus $H\cap K=\{1\}$ and hence we may consider $H\leq L$.

Because $H$ is non-abelian and fixed in the limit each $h_n(G)$ is non-abelian as well, so $T$ is not isometric to a real line by (3). Then if there existed a global fixed point $x\in T$ we would have $H\leq\Stab_L[*,x]$, which would contradict (2) because $H$ is non-abelian.

\end{proof}

\section{Modular automorphisms, bending, and shortening}


\begin{dfn}
Let $G$ be a finitely generated group with a splitting $\Lambda$ over abelian edge groups. If $Z=G_v$ is an abelian vertex subgroup of $G$ in $\Lambda$, let $I(Z)\leq Z$ be the subgroup generated by the incident edge stabilizers in $Z$. The \emph{incidental subgroup} $\bar{I}(Z)$ is the minimal direct factor of $Z$ containing $I(Z)$. We say that an abelian vertex subgroup $Z$ is \emph{incidental} if $\bar{I}(Z)=Z$.
\end{dfn}

\begin{dfn}
Let $G$ be a finitely generated group. A \emph{Dehn twist} is an automorphism of one of the following types:
\begin{itemize}
\item
If $G=A*_CB$ and $c\in Z(C)$, then define $\sigma\in\Aut(G)$ such that $\sigma(a)=a$ for $a\in A$ and $\sigma(b)=cbc^{-1}$ for $b\in B$.
\item
If $G=A*_C$ with stable letter $t$ and $c\in Z(C)$, then define $\sigma\in\Aut(G)$ such that $\sigma(a)=a$ for $a\in A$ and $\sigma(t)=tc$.
\end{itemize}

If $G$ has a splitting $\Lambda$ over abelian groups and $Z=G_v$ is an abelian vertex subgroup of $G$ in $\Lambda$, a \emph{generalized Dehn twist} is an automorphism $\sigma\in\Aut(G)$ which restricts to the identity on $\bar{I}(Z)$ and all other vertex groups.

The \emph{modular automorphism group} of $G$ is the subgroup $\Mod(G)$ of $\Aut(G)$ generated by Dehn twists, generalized Dehn twists, and inner automorphisms. If $H$ is a subgroup of $G$, the \emph{relative modular automorphism group} of $G$ relative to $H$ is the subgroup $\Mod_H(G)$ of $\Mod(G)$ consisting of modular automorphisms which restrict to the identity on $H$.
\end{dfn}

\begin{dfn}
Let $G$ be a finitely generated group which is freely indecomposable relative to a subgroup $H$, and let $\Gamma$ be a toral relatively hyperbolic group with a fixed embedding $i:H\emb\Gamma$. Two morphisms $f,f':G\to\Gamma$ fixing $H$ differ by a \emph{bending move} if either
\begin{enumerate}[(i)]
\item
there is a splitting $G=A*_CB$ relative to $H$ over an abelian subgroup $C$ with $H\leq A$ such that $f(C)$ is contained in a maximal non-cyclic abelian subgroup $Z\leq \Gamma$ and there exists $z\in Z$ such that $f'|_A=f|_A$ and $f'|_B=\iota_z\circ f_B$; or
\item
there is a splitting $G=A*_C=\lang A,t\rang$ relative to $H$ over an abelian subgroup $C$ with $H\leq A$ such that $f(C)$ is contained in a maximal non-cyclic abelian subgroup $Z\leq \Gamma$ and there exists $z\in Z$ such that $f'|_A=f|_A$ and $f'(t)=zt$.
\end{enumerate}
\end{dfn}

\begin{rmk}
These are the type (B2) bending moves of \cite[Definition 3.4]{Groves05}. We do not require the type (B1) bending moves for shortening because we restrict to splittings relative to a torally complete subgroup, and such a splitting can contain no non-incidental abelian vertex groups.
\end{rmk}

\begin{lem}
Let $G$ be a toral relatively hyperbolic group which is freely indecomposable relative to a non-abelian torally complete subgroup $H$, and let $\Lambda$ be the JSJ splitting of $G$ relative to $H$. Then $\Lambda$ contains no non-incidental abelian vertex groups.
\end{lem}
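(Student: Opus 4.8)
The plan is to show that if $Z = G_v$ is an abelian vertex group of the JSJ splitting $\Lambda$ of $G$ relative to $H$, then $\bar{I}(Z) = Z$, i.e., the minimal direct factor of $Z$ containing the subgroup $I(Z)$ generated by incident edge groups is all of $Z$. The key input is that $H$ is torally complete: every maximal non-cyclic abelian subgroup of $G$ is conjugate into $H$. First I would observe that since $G$ is toral relatively hyperbolic, the abelian vertex group $Z$ is free abelian and is contained in a unique maximal abelian subgroup $M = Z_G(z)$ for any $z \in Z \setminus \{1\}$; since $\Lambda$ is relative to $H$, either $Z$ is cyclic (and the claim is easy, as $I(Z)$ is nontrivial so $\bar I(Z)=Z$ since $Z\cong\bbz$) or $M$ is non-cyclic, hence conjugate into $H$, so some conjugate $gZg^{-1} \le M \le$ a conjugate of $H$.

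Next I would argue that since the JSJ tree $T$ is relative to $\ch = \{H\} \cup \cp$ and $H$ is elliptic, $H$ fixes a vertex $w$ of $T$. The maximal abelian subgroup $M$ (up to conjugacy, inside $H$) fixes a point of $T$ as well. The crucial step: suppose for contradiction that $\bar{I}(Z) \ne Z$. Then $I(Z)$ — and indeed $\bar I(Z)$, being a direct factor — is a proper direct summand of the free abelian group $Z$, so there is a nontrivial direct complement $Z = \bar I(Z) \oplus Z'$ with $Z' \ne \{1\}$, and $Z'$ meets every incident edge group trivially. This means the generalized Dehn twists supported at $v$ act nontrivially. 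But more to the point for the relative JSJ: the vertex $v$ with abelian group $Z$ sits in a bipartite splitting where $Z$ is adjacent only to non-abelian vertices via abelian (in fact, by 1-acylindricity, self-normalizing-in-$Z$) edge groups. If $Z' \ne \{1\}$, then the maximal abelian subgroup $M \supseteq Z$ (up to conjugacy) cannot be elliptic in a way compatible with $H$ being torally complete and elliptic: the part $Z'$ of $Z$ would have to be conjugate into $H$ while also being "free" in the splitting, contradicting either the $2$-acylindricity of the action or the construction of $T$ as a collapsed tree of cylinders relative to $\ch$.

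More carefully, I would use that $T$ is the collapsed tree of cylinders and is relative to $\cp$, so every maximal non-cyclic abelian subgroup is elliptic; combined with toral completeness, $M$ is conjugate into $H$, which is elliptic, so $M$ fixes a point $x \in T$. Then $Z \le M$ fixes $x$, and since $Z$ is itself an abelian vertex stabilizer it fixes the vertex $v$; by $1$-acylindricity near non-abelian vertices and the bipartite structure, the stabilizer of the arc $[v,x]$ — if $x \ne v$ — would force a nontrivial edge group to be normalized in a large abelian group, and one deduces $M = Z$, so in fact $Z$ is a maximal non-cyclic abelian subgroup. Now every generator of a maximal abelian vertex group that is a non-cyclic abelian subgroup appearing in a JSJ over $\cb$ relative to $\ch$ must have all of $Z$ "used up" by incident edges: if $Z' \ne \{1\}$ were a complement, one could refine or modify the splitting by pushing $Z'$ off, contradicting that $\Lambda$ is the (canonical, maximally-refined-in-the-appropriate-sense) JSJ — concretely, the edge carrying $Z$ could be collapsed and $Z$ split as $\bar I(Z) \times Z'$ to produce a strictly "better" $(\cb,\ch)$-tree, or $Z'$ would be a free factor violating freely-indecomposability relative to $H$. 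This last contradiction is the main obstacle, and making it precise requires carefully invoking the defining universal property of the canonical JSJ from \cite[Corollary 9.19]{GL17} (compatibility with every $(\cb,\ch)$-tree) rather than hand-waving about "refinements."
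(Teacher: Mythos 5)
Your setup is on the right track — you correctly observe that cyclic $Z$ is trivially incidental, and that toral completeness forces a conjugate of (the maximal abelian group containing) $Z$ into $H$, which is elliptic — but you never actually close the argument, and you say so yourself. The step you flag as the "main obstacle" (contradicting $\bar I(Z)\neq Z$ via the universal property of the canonical JSJ or via a free-factor argument) is genuinely the wrong direction: the relative JSJ over $\cb$ relative to $\ch$ can in general have non-incidental abelian vertex groups, so no purely formal "refine the tree" or "extract a free factor" argument will work without invoking toral completeness \emph{at the decisive step}, not just at the setup.

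The paper's actual argument is shorter and more geometric, and it never needs to identify $Z$ with its maximal abelian overgroup $M$ or argue about refinements. Let $A=G_u$ be the non-abelian vertex group containing $H$. Since $gZg^{-1}\leq H\leq A$ for some $g$, the group $gZg^{-1}$ fixes both $g\cdot\tilde v$ (abelian type) and $\tilde u$ (non-abelian type), hence the whole arc $[g\cdot\tilde v,\tilde u]$. This arc joins vertices of opposite type in a bipartite tree, so it has odd length; $2$-acylindricity forces length $\leq 2$; so it is a single edge $\tilde e$. Translating by $g^{-1}$, the edge $g^{-1}\cdot\tilde e$ is incident to $\tilde v$ and is stabilized by all of $Z$. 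Therefore $Z\leq I(Z)\leq\bar I(Z)\leq Z$ and $Z$ is incidental. The point you were missing is that you should be proving $Z$ \emph{is itself an incident edge stabilizer}, not that $Z$ is maximal abelian and then deriving a contradiction from $\bar I(Z)\neq Z$. The parity-plus-acylindricity observation gives this directly, without ever needing the universal property of the JSJ or the relative freely-indecomposable hypothesis (beyond what was already used to have the JSJ at all).
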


\begin{proof}
Cyclic vertex groups are trivially incidental because edge groups are nontrivial, so let $T$ be the Bass-Serre tree corresponding to this splitting, let $Z=G_v$ be a non-cyclic abelian vertex subgroup of $\Lambda$, and let $A=G_u$ be the vertex subgroup of $\Lambda$ which contains $H$. Then $A$ is non-abelian and $Z$ is conjugate into $H$ because $H$ is torally complete in $G$, so $gZg^{-1}\leq H\leq A$ for some $g\in G$. Choose lifts $\tilde{v},\tilde{u}\in V(T)$. Then $gZg^{-1}$ stabilizes $g\cdot\tilde{v}$ and $\tilde{u}$, so it stabilizes the arc $[g\cdot\tilde{v},\tilde{u}]$. Because $T$ is bipartite and $A$ is non-abelian this arc must have odd length, and in particular it must be an edge $\tilde{e}=(g\cdot\tilde{v},\tilde{u})\in E(T)$ stabilized by $gZg^{-1}$ because the action of $G$ on $T$ is 2-acylindrical. But then the edge $g^{-1}\cdot\tilde{e}=(\tilde{v},g^{-1}\cdot\tilde{u})$ is stabilized by $Z$, so $Z\leq\bar{I}(Z)\leq Z$ and hence $Z=\bar{I}(Z)$ is incidental.
\end{proof}

\begin{dfn}
Let $G$ be a finitely generated group which is freely indecomposable relative to a subgroup $H$, and let $\Gamma$ be a toral relatively hyperbolic group with a fixed embedding $i:H\emb\Gamma$ such that $\Gamma$ acts by isometries on a pointed metric space $(X,d,*)$. Define an equivalence relation on $\Hom_H(G,\Gamma)$ generated by the relation $f\sim f'$ if either (i) $f'=f\circ\alpha$ for some $\alpha\in\Mod_H(G)$ or (ii) $f'$ differs from $f$ by a bending move. Note that any $f':G\to\Gamma$ such that $f\sim f'$ must fix $H$, so a morphism $f:G\to\Gamma$ fixing $H$ is \emph{short} if, for all $f':G\to\Gamma$ such that $f'\sim f$, $|f|\leq|f'|$.
\end{dfn}

\begin{thm}[Shortening Argument]\label{shortening}
Let $G=\lang S\rang$ be a finitely generated group (with $S$ finite) which is freely indecomposable relative to a non-abelian subgroup $H$, and let $\Gamma$ be a toral relatively hyperbolic group with a fixed embedding $i:H\emb\Gamma$ such that $i(H)$ is torally complete in $\Gamma$.

Suppose that $(f_n)_n$ is a sequence of distinct morphisms in $\Hom(G,\Gamma)$ fixing $H$ in the limit. If $(f_n)_n$ converges to a faithful isometric $G$-action on an $\bbr$-tree $T$, then all but finitely many $f_n$ are not short.
\end{thm}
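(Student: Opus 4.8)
The plan is to follow the classical shortening argument of Rips--Sela, as adapted to the relatively hyperbolic setting by Groves, but carried out relative to the subgroup $H$. Since $(f_n)_n$ fixes $H$ in the limit and $H$ is non-abelian, Theorem \ref{converge} applies: after passing to a subsequence we obtain a faithful superstable isometric $G$-action on an $\bbr$-tree $T$ with abelian arc stabilizers, trivial tripod stabilizers, $H$ fixing the basepoint $*$, and no global fixed point. (Faithfulness here is the hypothesis, so $K=\{1\}$ and $L=G$; note $G$ is then a $\Gamma$-limit group acting faithfully on $T$.) The first step is to invoke the structure theory of finitely generated groups acting on $\bbr$-trees (the Rips machine, in the relative form of \cite{Groves09,GL17}): the action decomposes $G$ as a graph of groups with vertex actions of \emph{simplicial}, \emph{axial (abelian)}, and \emph{surface (Seifert)} type, and the edge and abelian-vertex stabilizers are abelian, with $H$ elliptic (contained in a vertex group). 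Crucially, because $i(H)$ is torally complete in $\Gamma$ and the limit action's arc stabilizers embed into maximal abelian subgroups, the relevant splitting of $G$ relative to $H$ has no non-incidental abelian vertices, so the only bending moves needed are the type (B2) moves already introduced.

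The core of the argument is then the standard case analysis showing that for $n$ large, $f_n$ can be shortened using an element of $\Mod_H(G)$ or a bending move, contradicting shortness of infinitely many $f_n$. Fix a finite generating set $S$ and recall $|f_n| = \max_{s\in S} d(*, f_n(s)\cdot *)$, with scaling factors $\delta_n = |f_n|\to\infty$. For each generator $s\in S$, one analyzes how the segment $[*, f_n(s)\cdot *]$ sits relative to the approximating structure. Where $s$ acts on a surface-type piece, a suitable mapping-class (Dehn twist along a curve) element of $\Mod_H(G)$ shortens the displacement of the surface generators by a definite proportion (this is where non-exceptionality / pseudo-Anosov behavior of surfaces matters, but in the limit-tree argument one only needs the standard surface shortening). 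Where $s$ interacts with a simplicial edge carrying abelian stabilizer whose image lies in a non-cyclic abelian $Z\leq\Gamma$, a bending move by an element of $Z$ reduces length. Where $s$ runs along an axial (abelian-type) vertex, a generalized Dehn twist supported on the incidental-complement does the job. Each of these moves fixes $H$ because $H$ is elliptic and the modular automorphisms and bending moves are taken relative to $H$; and each strictly decreases $|f_n|$ for all large $n$ by the usual "the limit action is genuinely there" estimate. Combining the moves over all $s\in S$ gives a single $f_n' \sim f_n$ with $|f_n'| < |f_n|$.

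The main obstacle — and the step requiring the most care — is the case where some generator's displacement is \emph{not} accounted for by any shortenable piece: a priori $[*, f_n(s)\cdot *]$ could behave like a "translation along an axis" of an element acting hyperbolically on $T$ with cyclic (or degenerate) stabilizer, which the classical argument must rule out or handle via the simplicial part. This is exactly where faithfulness, superstability, and triviality of tripod stabilizers are used: they force the graph-of-groups decomposition to be "rich enough" (a genuine relative JSJ-type splitting) that every generator is covered, and they prevent the pathological thin/Levitt components that would obstruct shortening. One must also verify that shortening the generators of $S$ simultaneously does not reintroduce length elsewhere — handled, as usual, by choosing the modular element/bending data to depend only on the limiting structure and then noting the finitely many generators are controlled uniformly for $n$ large. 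Since all of these are precisely the ingredients assembled in \cite{Groves05,Groves09}, and the relative bookkeeping (keeping $H$ fixed, using only (B2) moves) has been set up above, the argument goes through; I would present it by reducing to the cited relative shortening theorem of Groves and checking that the hypotheses — non-abelian $H$ fixed in the limit, $i(H)$ torally complete in $\Gamma$, faithful limit action — match.
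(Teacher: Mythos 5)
Your overall strategy is the same as the paper's: pass to the faithful limit action from Theorem \ref{converge}, note that $H$ fixes the basepoint so one may shorten via $\Mod_H(G)$ and (B2) bending moves, then invoke the Rips machine and Groves' relative shortening theorem. However, your case analysis contains a small internal inconsistency worth fixing. You correctly observe early on that, because $i(H)$ is torally complete, the relevant splitting has no non-incidental abelian vertices and only (B2) moves are needed. But then in the case analysis you propose handling axial components via a ``generalized Dehn twist supported on the incidental-complement'' --- this is vacuous if every abelian vertex is incidental, and in fact the paper's actual observation is sharper: there are \emph{no} axial components at all, because any non-cyclic abelian subgroup in the image of $f_n$ is conjugate into the elliptic subgroup $i(H)$ (toral completeness again). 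Similarly, your worry about a generator that ``translates along an axis with cyclic or degenerate stabilizer'' is not a case the Rips machine leaves open. Once you note that free indecomposability relative to $H$ rules out Levitt/thin components and toral completeness rules out axial components, $T$ has only IET (surface) and discrete (simplicial) components, and you can cite \cite[Theorem 3.7]{Groves05} directly without the extra cases. The rest of your write-up --- $H$ elliptic, bending moves compatible with fixing $H$, the uniform estimate for large $n$ --- matches the paper's argument.
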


\begin{proof}
We will use the Rips machine to analyze the action of $G$ on $T$. Because $H$ is fixed in the limit, $H$ fixes the basepoint point $*\in T$ by Theorem \ref{converge}, and further we can shorten using elements of $\Mod_H(G)$ and bending moves. Because $G$ is freely indecomposable relative to $H$ there are no Levitt components, and there are no axial components because non-cyclic abelian subgroups in the image of $f_n$ are conjugate into the elliptic subgroup $i(H)$. Thus $T$ only consists of IET and discrete components, so we can shorten the segments $[*,s\cdot *]$ for $s\in S$ as in \cite[Theorem 3.7]{Groves05}.
\end{proof}

\section{Applications of the shortening argument}
\label{sect5}

\subsection{Shortening quotients}

\begin{dfn}
Let $A=\lang S\rang$ be a finitely generated group (with $S$ finite) which is freely indecomposable relative to a subgroup $H$, and let $G$ be a toral relatively hyperbolic group with a fixed embedding $i:H\emb G$.

A \emph{$G$-limit quotient of $A$ (relative to $H$)} is a quotient $L=A/K$, where $K=\stabker(f_n)$ is the stable kernel of a stable sequence $(f_n)_n$ of non-injective morphisms in $\Hom(A,G)$ (which fix $H$ in the limit). Note that $L$ is a $G$-limit group which contains a copy of $H$ in the relative case because we will have $H\cap K=\{1\}$. If each $f_n$ is short (relative to $H$), then $L$ is a \emph{$G$-shortening quotient of $A$ (relative to $H$)}.

Define an order $\geq$ on the set of (relative) $G$-limit quotients by setting $(q:A\surj L)\geq(q':A\surj L')$, or simply $L\geq L'$, if there exists a morphism $f:L\to L'$ such that $q'=f\circ q$. The quotients $L$ and $L'$ are said to be equivalent if the map $f$ is an isomorphism. This defines an equivalence relation on the set of (relative) $G$-limit quotients. Additionally, this defines an order and an equivalence relation on the set of (relative) $G$-shortening quotients.
\end{dfn}

\begin{rmk}
\begin{enumerate}[(1)]
\item
It is not standard to assume that the morphisms $f_n$ are non-injective.
\item
Any sequence $L_1\geq L_2\geq L_3\geq\cdots$ of (relative) $G$-limit quotients terminates by \cite[Theorem 5.2]{Groves05}.
\item
If $q:A\surj L$ is a (relative) $G$-limit quotient corresponding to a stable sequence $(f_n)_n$ in $\Hom_H(A,G)$, then all but finitely many $f_n$ factor through $q$ by \cite[Theorem 5.6]{Groves05}.
\item
Given any sequence $L_1\leq L_2\leq L_3\leq\cdots$ of (relative) $G$-limit quotients of $A$, there exists a $G$-limit quotient $L$ of $A$ such that $L\geq L_i$ for all $i$ by \cite[Proposition 5.13]{Groves05}. In the relative case, if $(f_n)_n$ is the stable sequence in $\Hom_H(A,G)$ corresponding to $L$, then $H$ must be fixed in the limit by (3) because $L$ contains a copy of $H$ as noted above. Thus $L$ is a relative $G$-limit quotient. We note that \cite[Proposition 5.13]{Groves05} applies and that $L$ is a $G$-limit quotient in our sense because $L$ is defined by a sequence of non-injective morphisms in the original proof.
\end{enumerate}
\end{rmk}

\begin{lem}\label{lemsq1}
Let $A$ be a non-abelian toral relatively hyperbolic group which is freely indecomposable relative to a non-abelian subgroup $H$, and let $G$ be a toral relatively hyperbolic group with a fixed embedding $i:H\emb G$. Then $G$-shortening quotients of $A$ relative to $H$ are proper quotients.
\end{lem}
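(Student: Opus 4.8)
The plan is to argue by contradiction: suppose some $G$-shortening quotient $q:A\surj L$ of $A$ relative to $H$ is not a proper quotient, i.e. $q$ is an isomorphism, so that $A$ itself is a $G$-shortening quotient of itself. By definition this means there is a stable sequence $(f_n)_n$ of \emph{non-injective} morphisms in $\Hom(A,G)$ fixing $H$ in the limit, with $\stabker(f_n)=\{1\}$ (so $A\cong A/\stabker(f_n)$), and with each $f_n$ short relative to $H$. Since each $f_n$ is non-injective, the $f_n$ are certainly not equal to the inclusion $i:A\emb G$ (which is injective, $A$ being a subgroup of $G$), and in fact one checks the $f_n$ may be taken to be distinct — if infinitely many agreed, their common value would be a single non-injective morphism, contradicting $\stabker(f_n)=\{1\}$ via the stability dichotomy. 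So we have a sequence of distinct morphisms $A\to G$ fixing $H$ in the limit.

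Next I would feed this sequence into the machinery of \S3--4. By Theorem~\ref{converge}, a stable subsequence converges to an isometric action of $L=A/\stabker(f_n)=A$ on an $\bbr$-tree $T$ with no global fixed point; since $\stabker(f_n)=\{1\}$, the kernel $K$ of the $T$-action satisfies $K\supseteq\stabker(f_n)=\{1\}$ but we need $K=\{1\}$ — this is where I expect the one real subtlety. Theorem~\ref{converge}(4) only gives $\stabker(h_n)\le K$, not equality. However, $A$ is toral relatively hyperbolic and in particular a $G$-limit group in a strong sense; the standard argument (as in \cite{Groves05,Groves09}) is that for a sequence realizing the group itself as its own shortening quotient, the tree action is faithful — concretely, if $g\in K\setminus\{1\}$ then $g$ acts elliptically everywhere, and using superstability (Theorem~\ref{converge}(6)) together with the fact that $A$ has no global fixed point one derives that $g$ lies in $\stabker(h_n)=\{1\}$, a contradiction. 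I would also note $H$ is non-abelian, so Theorem~\ref{converge} applies, and $i(H)$ is torally complete in $G$ since $H\elemb G$ — wait, here $H$ is only assumed to be a subgroup with a fixed embedding, so I must additionally assume (or it is part of the hypothesis chain from the main proof, where $H\elemb G$) that $i(H)$ is torally complete; I will state this is where the toral completeness of $H$ in $G$ is used, matching the hypothesis of Theorem~\ref{shortening}.

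Once faithfulness of the $T$-action is established, Theorem~\ref{shortening} (the Shortening Argument) applies directly: $A=\lang S\rang$ is freely indecomposable relative to the non-abelian subgroup $H$, $G$ is toral relatively hyperbolic with $i(H)$ torally complete, and $(f_n)_n$ is a sequence of distinct morphisms fixing $H$ in the limit converging to a faithful isometric $A$-action on $T$. The conclusion is that all but finitely many $f_n$ are \emph{not} short relative to $H$. This contradicts the assumption that every $f_n$ is short. Hence no $G$-shortening quotient of $A$ relative to $H$ can be an isomorphism, i.e. they are all proper quotients.

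The main obstacle is the gap between $\stabker(f_n)=\{1\}$ and faithfulness of the limiting tree action — i.e. controlling the kernel $K$. Everything else is assembling the already-stated results (Theorem~\ref{converge} and Theorem~\ref{shortening}) in the right order and checking the hypotheses (distinctness of the $f_n$, non-abelianness of $H$, toral completeness of $i(H)$, free indecomposability of $A$ relative to $H$). I would handle the kernel issue either by invoking the corresponding step from \cite{Groves05} directly, or by the elliptic-element-plus-superstability argument sketched above; in the torsion-free toral relatively hyperbolic setting this is standard.
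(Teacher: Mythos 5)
Your proposal is correct and takes essentially the same route as the paper's (terse) proof: assume the quotient is not proper, so $\stabker(f_n)=\{1\}$; observe the $f_n$ are distinct; pass to the limit via Theorem~\ref{converge}; apply Theorem~\ref{shortening} to contradict shortness. Your justification of distinctness is fine and is the same stability dichotomy the paper implicitly relies on. The one subtlety you flag --- passing from $\stabker(f_n)=\{1\}$ to faithfulness of the limiting tree action, since Theorem~\ref{converge}(4) only gives $\stabker(h_n)\leq K$ --- is a legitimate thing to worry about, but it resolves more directly than the superstability argument you sketch. Since $H$ is non-abelian and fixed in the limit, each $h_n(A)$ is non-abelian, so by Theorem~\ref{converge}(3) the tree $T$ is not a line; combined with the absence of a global fixed point, $T$ has a branch point and hence contains a tripod. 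Any $g\in K$ acts trivially on $T$, so in particular it stabilizes a tripod, and Theorem~\ref{converge}(5) then forces $g\in\stabker(h_n)=\{1\}$. Thus $K=\{1\}$ and the action is faithful, with no need to invoke superstability or external results from \cite{Groves05}. Your observation about toral completeness of $i(H)$ is also well taken: Theorem~\ref{shortening} does require it, and the statement of Lemma~\ref{lemsq1} as written omits this hypothesis, so it must be supplied (as it is in every application where $H\elemb G$).
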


\begin{proof}
Let $L=A/K$ be a $G$-shortening quotient relative to $H$, where $K=\stabker(f_n)$ for a stable sequence $(f_n)_n$ of non-injective short morphisms in $\Hom(A,G)$ which fix $H$ in the limit. If $L$ is not proper, then $K=\{1\}$ is trivial, so the morphisms are distinct and this sequence converges to a faithful $A$-action on an $\bbr$-tree (possibly after passing to a subsequence) by Theorem \ref{converge}. This action has no global fixed point, trivial tripod stabilizers, and $H$ acting elliptically, so by Theorem \ref{shortening} not all $f_n$ could have been short, which is a contradiction.
\end{proof}

\begin{lem}\label{lemsq2}
Let $A=\lang S\rang$ be a finitely generated group (with $S$ finite) which is freely indecomposable relative to a subgroup $H$, and let $G$ be a toral relatively hyperbolic group with a fixed embedding $i:H\emb G$. There are only finitely many proper maximal $G$-shortening quotients of $A$ (relative to $H$) up to equivalence.
\end{lem}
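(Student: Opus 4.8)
The plan is to argue by contradiction, adapting the argument of \cite[Proposition 5.14]{Perin11}. Suppose there are infinitely many pairwise non-equivalent proper maximal $G$-shortening quotients of $A$ relative to $H$, say $q_i:A\surj L_i=A/K_i$ for $i\in\bbn$, where $K_i=\stabker(f^i_n)$ for a stable sequence $(f^i_n)_n$ of non-injective short morphisms fixing $H$ in the limit. Since each $L_i$ is maximal and they are pairwise non-equivalent, the normal subgroups $K_i$ are pairwise incomparable: if $K_i\subseteq K_j$ then $L_i\geq L_j$, forcing $L_i$ equivalent to $L_j$ by maximality of $L_j$, a contradiction.

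First I would produce, for each $i$, a short non-injective morphism $\varphi_i:A\to G$ which factors through $L_i$ but through none of $L_1,\ldots,L_{i-1}$, taken to be a term $f^i_{n_i}$ of the defining sequence of $L_i$. By \cite[Theorem 5.6]{Groves05}, all but finitely many $f^i_n$ factor through $L_i$, i.e.\ $K_i\subseteq\ker f^i_n$. If for every large $n$ the morphism $f^i_n$ also factored through some $L_{j(n)}$ with $j(n)<i$, then since $j(n)$ takes finitely many values some $j^*<i$ would satisfy $K_{j^*}\subseteq\ker f^i_n$ for infinitely many $n$; stability of $(f^i_n)_n$ would then give $K_{j^*}\subseteq\stabker(f^i_n)=K_i$, contradicting incomparability. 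Choosing such an $n_i$, large enough also that $(\varphi_i)_i$ fixes $H$ in the limit, the $\varphi_i$ are short, non-injective, fix $H$ in the limit, and are pairwise distinct (a coincidence $\varphi_i=\varphi_j$ with $j<i$ would make $\varphi_i$ factor through $L_j$).

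Next, passing to a stable subsequence of $(\varphi_i)_i$, form the $G$-shortening quotient $L_\infty=A/\stabker(\varphi_i)$. If $L_\infty=A$ then, since the $\varphi_i$ are distinct, Theorem \ref{converge} gives a faithful isometric $A$-action on an $\bbr$-tree with no global fixed point, trivial tripod stabilizers, and $H$ elliptic, and Theorem \ref{shortening} contradicts the $\varphi_i$ being short, exactly as in the proof of Lemma \ref{lemsq1}. Hence $L_\infty$ is a proper $G$-shortening quotient, and by \cite[Theorem 5.6]{Groves05} all but finitely many $\varphi_i$ factor through $L_\infty$, i.e.\ $\ker q_{L_\infty}\subseteq\ker\varphi_i$ for $i$ large.

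Finally I would locate $L_\infty$ below one of the $L_k$. Every proper $G$-shortening quotient is dominated by a maximal one --- here the descending chain condition \cite[Theorem 5.2]{Groves05} together with the existence of suprema \cite[Proposition 5.13]{Groves05} furnishes maximal elements above $L_\infty$ --- and since our list is exhaustive this maximal quotient is equivalent to some $L_k$. Then $K_k=\ker q_{L_k}\subseteq\ker q_{L_\infty}\subseteq\ker\varphi_i$ for all large $i$, so $\varphi_i$ factors through $L_k$ for all large $i$; but $\varphi_i$ does not factor through $L_k$ once $i>k$, a contradiction. This proves that up to equivalence there are only finitely many proper maximal $G$-shortening quotients. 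The main obstacle is this last domination step: the supremum produced by \cite[Proposition 5.13]{Groves05} is a priori only a $G$-limit quotient, which need not be proper or a shortening quotient, so extracting from it a genuine maximal proper shortening quotient above $L_\infty$ is the technical heart of the proof, as in \cite{Perin11}. By comparison, the construction of the $\varphi_i$ and the properness of $L_\infty$ are routine adaptations of Lemma \ref{lemsq1} and the shortening argument.
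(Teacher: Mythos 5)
Your argument reconstructs, in detail, the standard Sela--Perin diagonalization, which the paper does not write out: the paper's ``proof'' consists of the remark immediately after the statement that the argument of \cite[Lemma 6.2]{Groves05} carries over to the relative setting because the morphisms used there are already non-injective. So you have rebuilt from primary sources an argument the paper merely cites; the underlying idea is the same. The construction of the $\varphi_i$ (using pairwise incomparability of the $K_i$ together with stability of $(f^i_n)_n$ to find a term that factors through $L_i$ but through none of $L_1,\dots,L_{i-1}$), the diagonal choice of $n_i$ so that the $\varphi_i$ fix $H$ in the limit, and the passage to the shortening quotient $L_\infty$ with almost all $\varphi_i$ factoring through it are all correct. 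One small mismatch of hypotheses: to deduce that $L_\infty$ is proper you invoke Theorems \ref{converge} and \ref{shortening}, which require $H$ non-abelian, but the lemma is also applied with $H=\{1\}$ (in Proposition \ref{sq2}); in that case one must appeal to the absolute shortening argument \cite[Theorem 3.7]{Groves05} rather than Theorem \ref{shortening}.

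The step you flag at the end is a genuine gap and not merely a technicality you could wave away. You need every proper $G$-shortening quotient to be dominated by a maximal one, and neither of the tools you point to delivers this: \cite[Proposition 5.13]{Groves05} produces an upper bound that is a priori only a $G$-limit quotient, not a shortening quotient; and \cite[Theorem 5.2]{Groves05} is the descending chain condition for the order $\geq$, which yields minimal, not maximal, elements. Closing the gap requires showing that an ascending chain $M_1\leq M_2\leq\cdots$ of shortening quotients admits an upper bound which is itself a shortening quotient, via a further diagonal argument on the defining sequences of the $M_j$ with a careful choice of indices so that the resulting stable kernel is contained in each $\ker q_{M_j}$. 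This is precisely what \cite[Lemma 6.2]{Groves05} (and the analogous Lemma 5.12 of \cite{Perin11}) actually establish, and it is the part the paper is leaning on when it omits the proof. As written, your argument assumes this domination property without justification, so the proof is incomplete at exactly the point you identified.
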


We omit the proof of the preceding proposition as it is nearly identical to that of \cite[Lemma 6.2]{Groves05}, except that we only consider morphisms which fix a subgroup. As above, the proof of \cite[Lemma 6.2]{Groves05} applies in our context because the morphisms used are non-injective.

\begin{dfn}\label{star}
Let $A$ be a toral relatively hyperbolic group which is freely indecomposable relative to a subgroup $H$, let $G$ be a toral relatively hyperbolic group with a fixed embedding $i:H\emb G$, and let $\Lambda$ be the JSJ splitting of $A$ relative to $H$. We say that a map $f:A\to G$ satisfies $*_{H,A,G}$ if $f$ fixes $H$ and, for all $e\in E(\Lambda)$,
\[\rank(Z_A(A_e))=\rank(Z_G(f(A_e))).\]
\end{dfn}


\begin{lem}\label{nobend}
Let $A$ be a toral relatively hyperbolic group which is freely indecomposable relative to a subgroup $H$, let $G$ be a toral relatively hyperbolic group with a fixed embedding $i:H\emb G$. Suppose that $f:A\to G$ satisfies $*_{H,A,G}$ for some JSJ splitting $\Lambda$ of $A$ relative to $H$. If either (i) $H$ is torally complete in $A$ and the image $i(H)$ is torally complete in $G$; or (ii) $H=\{1\}$ and $A$ contains no non-cyclic abelian subgroups (i.e., $A$ is freely indecomposable and hyperbolic), then for any $f':A\to G$ such that $f'$ differs from $f$ by a bending move there exists $\sigma\in\Mod_H(A)$ such that $f'=f\circ\sigma$, i.e., $f$ admits no non-trivial bending moves.
\end{lem}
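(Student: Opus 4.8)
The plan is to turn the data of a bending move into a (generalized) Dehn twist along the JSJ splitting $\Lambda$ that fixes $H$, using $*_{H,A,G}$ to recognize the twisting element as the image of an element of an abelian vertex group of $\Lambda$. Suppose $f'$ differs from $f$ by a bending move: there is a one-edge abelian splitting $\Delta$ of $A$ relative to $H$ — either $A = A_1 *_C A_2$ or $A = A_1 *_C = \langle A_1, t \rangle$ with $H \leq A_1$ — a maximal non-cyclic abelian subgroup $Z \leq G$ with $f(C) \leq Z$, and an element $z \in Z$, such that $f'|_{A_1} = f|_{A_1}$ and $f'|_{A_2} = \iota_z \circ f|_{A_2}$ (respectively $f'(t) = zt$). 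Since abelian subgroups of the toral relatively hyperbolic group $A$ are finitely generated, $C \in \cb$ and $\Delta$ is a $(\cb,\ch)$-tree; as the JSJ is compatible with every $(\cb,\ch)$-tree, passing to a common refinement shows that, up to conjugacy, $C$ is contained in an edge group $A_e$ of $\Lambda$ or in a vertex group, and a routine manipulation using universal ellipticity of the JSJ reduces the latter case to the former (or makes the bending inner, hence modular).

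I would first rule out $f(C) = 1$: if $C \leq A_e$ and $f(C) = 1$ then $f(A_e)$, being the image of an abelian group under a homomorphism killing a nontrivial subgroup, has strictly smaller torsion-free rank than $Z_A(A_e)$ (torsion-freeness of $G$ kills any finite image), whereas $*_{H,A,G}$ together with commutative-transitivity forces $\rank Z_G(f(A_e)) = \rank Z_A(A_e)$ with $Z_G(f(A_e))$ the maximal abelian subgroup containing $f(A_e)$ — a contradiction. So $f(C) \neq 1$. Let $v$ be the abelian vertex of the bipartite $\Lambda$ adjacent to $e$, with $M := A_v$. As $G$ is CSA, $Z = Z_G(f(c))$ for any nontrivial $c \in C$, and since $f(A_e)$ is abelian and contains $f(c)$ we get $f(A_e) \leq Z$, hence $Z_G(f(A_e)) = Z$; then $*_{H,A,G}$ gives $\rank Z = \rank Z_A(A_e)$, and as $Z$ is non-cyclic this forces $Z_A(A_e)$ to be a maximal non-cyclic abelian subgroup of $A$, which by the tree-of-cylinders description of $\Lambda$ is exactly $M$. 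Under hypothesis (i), $H$ is torally complete in $A$, so some conjugate of $M$ lies in $H$; hence $f$ is injective on $M$ and $f(M)$ is maximal non-cyclic abelian in $i(H)$. Using that $i(H)$ is torally complete in $G$ and comparing, via CSA, $f(M)$ with the conjugate of $Z$ it contains, I would deduce that $f(M)$ is maximal abelian in $G$, so $f(M) = Z$; thus $z = f(m)$ for a unique $m \in M$, and $m$ centralizes $A_e$.

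It remains to build $\sigma \in \Mod_H(A)$ with $f' = f \circ \sigma$. Collapsing $\Lambda$ onto the star of $v$ exhibits $M$ as an abelian vertex group carrying the incident edge groups $A_e = A_{e_1}, A_{e_2}, \ldots$, with the $\Delta$-factor $A_2$ identified (up to conjugacy) with the component across $e$ from $v$; since $\Lambda$ has no non-incidental abelian vertex groups (proved earlier), $\bar I(M) = M$. The automorphism $\sigma$ of $A$ that is the identity on $A_1 \supseteq H$ and equals $\iota_m$ on $A_2$ is well defined because $m$ centralizes $A_e$, and by the structure of the modular group of $\Lambda$ it is modular; it fixes $H$, so $\sigma \in \Mod_H(A)$. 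By construction $f \circ \sigma$ agrees with $f$ on $A_1$ and with $\iota_{f(m)} \circ f = \iota_z \circ f$ on $A_2$, whence $f' = f \circ \sigma$; the HNN case is identical, with $\sigma(t) = mt$. In case (ii) the argument collapses: $A$ has no non-cyclic abelian subgroups, so the rank computation already contradicts the existence of a non-cyclic $Z \supseteq f(C)$ with $f(C) \neq 1$, and $f(C) = 1$ is impossible as above, so $f$ admits no non-trivial bending move at all.

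The step I expect to be the main obstacle is the identification $f(M) = Z$, together with the verification that the twist realizing the bending is genuinely a modular automorphism rather than merely an automorphism: $*_{H,A,G}$ by itself gives only that $f$ maps $Z_A(A_e)$ isomorphically onto a finite-index subgroup of $Z$, and promoting this to surjectivity requires a careful simultaneous use of both toral-completeness hypotheses and the CSA property to compare maximal abelian subgroups of $A$, of $i(H)$, and of $G$. The earlier lemma that $\Lambda$ contains no non-incidental abelian vertex groups is exactly what makes the twisting element accessible through the edge structure at $v$.
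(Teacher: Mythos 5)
Your proof is correct and follows essentially the same route as the paper: under hypothesis (i) you use $*_{H,A,G}$ together with toral completeness of $H$ in $A$ and of $i(H)$ in $G$, plus the CSA property, to conclude that the bending subgroup $Z$ equals $f(M)$ where $M=Z_A(A_e)$, so the bending element $z$ is $f(m)$ for some $m\in M$ and the bending move is precomposition by a Dehn twist lying in $\Mod_H(A)$; under (ii) the rank condition in $*_{H,A,G}$ makes $Z_G(f(A_e))$ cyclic, ruling out any bending. The only structural difference is your preliminary step reducing a bending move along an arbitrary one-edge abelian splitting $\Delta$ to one along a JSJ edge via compatibility of the JSJ with $(\cb,\ch)$-trees: the paper skips this and implicitly treats bending moves as occurring along edges of $\Lambda$ (consistent with the shortening framework of Groves), instead arguing edge-by-edge and dispatching edges $A_e$ not conjugate into $H$ by noting that $Z_A(A_e)$ is then cyclic by toral completeness. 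Your sketch of that reduction (``a routine manipulation using universal ellipticity\ldots reduces the latter case to the former'') would need more care to be fully rigorous, but as the paper's convention already confines bending to JSJ edges it is not strictly required.
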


\begin{proof}
The result is clear in case (ii) because the parabolic subgroups of $G$ are the non-cyclic abelian subgroups of $G$, so the condition $*_{H,A,G}$ ensures that $Z_G(f(A_e))$ is cyclic and hence not conjugate into a parabolic subgroup, so suppose that $H$ is torally complete in $A$ and $i(H)$ is torally complete in $G$.

Any $A_e$ which is not conjugate in $A$ into $H$ must have a cyclic centralizer, so because $Z_G(f(A_e))$ is also cyclic the image of $A_e$ is not contained in a parabolic subgroup of $G$ and hence there are no bending moves in such edges.

Suppose that $A_e$ is conjugate into $H$ by $a\in A$. Let $M=Z_A(A_e)$ and $N=Z_G(f(A_e))$. Then $aMa^{-1}$ is maximal abelian in $H$, so $i(aMa^{-1})$ is maximal abelian in $i(H)$ and hence also in $G$ because $i(H)$ is torally complete. We also have that $f(a)Nf(a)^{-1}$ is maximal abelian in $G$, so because
\[f(a)f(M)f(a)^{-1}=f(aMa^{-1})=i(aMa^{-1})\leq i(H)\]
is a finite index subgroup of $f(a)Nf(a)^{-1}$ by $*_{H,A,G}$ we have that $f(a)Nf(a)^{-1}$ is maximal abelian in $i(H)$. Then $N=f(M)$, so for any $n\in N$ there exists $m\in M$ such that $n=f(m)$. Thus any bending in the edge $A_e$ by the element $n$ can be realized as precomposing by a Dehn twist by the element $m$.
\end{proof}

\begin{lem}
Let $G$ be a toral relatively hyperbolic group and let $A$ be a toral relatively hyperbolic retract of $G$ which is freely indecomposable relative to a subgroup $H$. If either (i) $H$ is torally complete in $G$; or (ii) $H=\{1\}$ and no element of $A$ is conjugate into a non-cyclic abelian subgroup of $G$, then the usual embedding $A\emb G$ satisfies $*_{H,A,G}$.
\end{lem}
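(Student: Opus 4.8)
The plan is to fix a retraction $r:G\surj A$ with $r|_A=\mathrm{id}_A$ and verify the rank condition defining $*_{H,A,G}$ one edge of the JSJ splitting $\Lambda$ of $A$ relative to $H$ at a time. Since the map in question is the inclusion $A\emb G$, it automatically fixes $H$ and sends each edge group $A_e$ to itself as a subgroup of $G$, so for every edge $e$ of $\Lambda$ I must show $\rank(Z_A(A_e))=\rank(Z_G(A_e))$. Write $B:=A_e$; it is abelian because $\Lambda$ is JSJ-like, and nontrivial because $A$ is freely indecomposable relative to $H$. Since toral relatively hyperbolic groups are commutative-transitive, $Z_G(B)=Z_G(b)$ and $Z_A(B)=Z_A(b)$ for any $b\in B\setminus\{1\}$; in particular these are the maximal abelian subgroups of $G$ and of $A$ containing $B$, and each is a finitely generated free abelian group since $G$ and $A$ are torsion-free relatively hyperbolic.

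The key preliminary observation is that $r$ restricts to a retraction $Z_G(B)\surj Z_A(B)$: if $g\in Z_G(B)$ then $r(g)$ commutes with $r(B)=B$, hence $r(g)\in Z_A(B)$, and $r$ is the identity on $Z_A(B)\leq A$. In particular $\rank(Z_A(B))\leq\rank(Z_G(B))$, and everything comes down to the reverse inequality. First I would dispose of the case $Z_G(B)\cong\bbz$: then $Z_A(B)$ is a nontrivial subgroup of $\bbz$, so $\rank(Z_A(B))=1=\rank(Z_G(B))$ with nothing more to do. This case in fact always occurs under hypothesis (ii), since if $Z_G(B)$ were non-cyclic then a nontrivial $b\in B\leq A$ would lie in the non-cyclic abelian subgroup $Z_G(B)$ of $G$, contradicting the assumption that no element of $A$ is conjugate into such a subgroup.

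It remains to treat the case that $Z_G(B)$ is non-cyclic, which by the previous remark forces us into hypothesis (i), so $H$ is torally complete in $G$. Now $Z_G(B)=Z_G(b)$ is a maximal \emph{non-cyclic} abelian subgroup of $G$, so toral completeness yields $g\in G$ with $gZ_G(B)g^{-1}\leq H\leq A$. Applying $r$, and using that $r$ fixes $A$ pointwise together with the preliminary observation, gives
\[gZ_G(B)g^{-1}=r\bigl(gZ_G(B)g^{-1}\bigr)=r(g)\,Z_A(B)\,r(g)^{-1},\]
so $Z_A(B)$ is a conjugate of $Z_G(B)$ in $G$ and hence $\rank(Z_A(B))=\rank(Z_G(B))$. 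Running this over all edges of $\Lambda$ shows the inclusion $A\emb G$ satisfies $*_{H,A,G}$.

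The step I expect to be the crux is precisely this last one: the hypothesis only provides toral completeness of $H$, not of $A$, so we only learn that \emph{some} conjugate of $Z_G(B)$ sits inside $A$, and the retraction $r$ is what lets us transport this information back onto $Z_A(B)$ itself. The rest is routine bookkeeping with commutative transitivity and finitely generated abelian groups, and I do not anticipate needing any finer structure of the tree of cylinders underlying $\Lambda$ beyond the facts that its edge groups are abelian and nontrivial.
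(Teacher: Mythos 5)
Your proof is correct, and it reorganizes the paper's argument around a different case split. The paper divides by whether $A_e$ is conjugate into $H$: when it is, it argues that $aMa^{-1}$ is already maximal abelian in $G$ and therefore equals $aNa^{-1}$; when it is not, it observes that $A_e$ and $M$ are then forced to be cyclic and runs a contradiction, applying $r$ to a conjugating element to show that a non-cyclic $N$ would push $A_e$ into $H$ after all. You instead split by whether $Z_G(A_e)$ is cyclic, which lets a single one-line case absorb both hypothesis (ii) and all the cyclic edges under (i); in the non-cyclic case you apply $r$ to the entire subgroup $gZ_G(A_e)g^{-1}\leq H\leq A$ rather than only to the conjugator, directly exhibiting $Z_A(A_e)$ as a $G$-conjugate of $Z_G(A_e)$. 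The enabling step is your preliminary observation that $r$ always restricts to a retraction of $Z_G(A_e)$ onto $Z_A(A_e)$, which the paper does not isolate --- there $r$ is used only to transport containments such as $gNg^{-1}\leq H$ back into $A$. Both routes rest on the same two inputs, the retraction and toral completeness, but your packaging yields a more uniform direct argument and avoids the paper's intermediate claim that $aMa^{-1}\leq H$, which itself needs a further appeal to toral completeness in $A$.
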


\begin{proof}
Again, the result is clear in case (ii), so suppose that $H$ is torally complete in $G$.

It is clear that the usual embedding $i:A\emb G$ fixes $H$, so let $\Lambda$ be the JSJ splitting of $A$ relative to $H$ and let $r:G\surj A$ be the retraction of $G$ onto $A$. Let $e\in E(\Lambda)$, $M=Z_A(A_e)$, and $N=Z_G(i(A_e))=Z_G(A_e)$. 

Suppose that $A_e$ is conjugate into $H$ by $a\in A$. Then $aMa^{-1}\leq H$ is maximal abelian in both $G$ and $H$ because $H$ is torally complete in $G$, and further $aMa^{-1}\leq aNa^{-1}$. Then $aMa^{-1}=aNa^{-1}$ and hence $M=N$, so $\rank(M)=\rank(N)$.

Any $A_e$ which is not conjugate into $H$ in $A$ is also not conjugate into $H$ in $G$ by the retraction, so both $A_e$ and $M$ must be cyclic. Suppose that $\rank(N)>\rank(M)=1$. Then there exists $g\in G$ such that $gNg^{-1}\leq H$. But then
\[r(g)A_er(g)\leq r(g)Mr(g)^{-1}=r(gMg^{-1})\leq r(gNg^{-1})=gNg^{-1}\leq H,\]
which contradicts the assumption that $A_e$ is not conjugate into $H$.

Thus $A\emb G$ satisfies $*_{H,A,G}$.

\end{proof}

\begin{cor}\label{cor58}
Let $G$ be a toral relatively hyperbolic group and let $A$ be a toral relatively hyperbolic retract of $G$ which is freely indecomposable relative to a subgroup $H$. If either (i) $H$ is torally complete in $G$; or (ii) $H=\{1\}$ and no element of $A$ is conjugate into a non-cyclic abelian subgroup of $G$, then the usual embedding $A\emb G$ admits no non-trivial bending moves.
\end{cor}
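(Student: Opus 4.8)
The plan is to obtain Corollary \ref{cor58} by simply combining the two preceding lemmas: the one asserting that under hypothesis (i) or (ii) the inclusion $i\colon A\emb G$ satisfies $*_{H,A,G}$, together with Lemma \ref{nobend}, which says that any map satisfying $*_{H,A,G}$ admits no non-trivial bending moves. The only point requiring attention is that Lemma \ref{nobend} is phrased with the hypothesis "\,$H$ torally complete in $A$ \emph{and} $i(H)$ torally complete in $G$\,", whereas here we are only given that $H$ is torally complete in $G$; so the real work is transferring toral completeness from $G$ down to the retract $A$.

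First I would apply the previous lemma to conclude that $i\colon A\emb G$ satisfies $*_{H,A,G}$ with respect to the JSJ splitting $\Lambda$ of $A$ relative to $H$. In case (ii), where $H=\{1\}$ and no element of $A$ is conjugate into a non-cyclic abelian subgroup of $G$, the group $A$ itself contains no non-cyclic abelian subgroup, so Lemma \ref{nobend}(ii) applies verbatim and we are done. In case (i), $i(H)=H$ is torally complete in $G$ by hypothesis, so it remains only to show $H$ is torally complete in $A$.

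For this, let $r\colon G\surj A$ be the retraction (so $r|_A=\mathrm{id}$, and in particular $r|_H=\mathrm{id}$), and let $Z\leq A$ be a maximal non-cyclic abelian subgroup of $A$. Enlarge $Z$ to a maximal non-cyclic abelian subgroup $Z'\leq G$; toral completeness of $H$ in $G$ gives $g\in G$ with $gZ'g^{-1}\leq H\leq A$. Applying $r$ yields $r(g)\,r(Z')\,r(g)^{-1}=r(gZ'g^{-1})=gZ'g^{-1}\leq H$. Since $Z=r(Z)\leq r(Z')$ and $r(Z')$ is abelian, the subgroup $r(Z')$ is non-cyclic (it contains the non-cyclic group $Z$), so maximality of $Z$ in $A$ forces $Z=r(Z')$. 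Hence $r(g)Zr(g)^{-1}\leq H$ with $r(g)\in A$, proving $H$ is torally complete in $A$. Thus Lemma \ref{nobend}(i) applies: for every $f'\colon A\to G$ differing from $i$ by a bending move there is $\sigma\in\Mod_H(A)$ with $f'=i\circ\sigma$, which is precisely the assertion that $A\emb G$ admits no non-trivial bending moves. The only mildly nontrivial step is the toral-completeness transfer via the retraction just carried out; the rest is bookkeeping over the two cited lemmas.
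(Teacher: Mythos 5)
Your proof is correct and follows the route the paper clearly intends: apply the unnamed lemma immediately preceding the corollary to conclude that the inclusion $A\emb G$ satisfies $*_{H,A,G}$, then invoke Lemma \ref{nobend} to rule out non-trivial bending moves. You also correctly spot a genuine hypothesis mismatch that the paper glosses over: Lemma \ref{nobend}(i) asks for $H$ torally complete in \emph{both} $A$ and $G$, whereas the corollary only assumes toral completeness of $H$ in $G$, and case (ii) of Lemma \ref{nobend} asks that $A$ contain no non-cyclic abelian subgroup, a stronger-looking condition than ``no element of $A$ is conjugate into a non-cyclic abelian subgroup of $G$.'' Your reductions are both sound: in case (ii), a non-cyclic abelian $Z\leq A$ would already be a non-cyclic abelian subgroup of $G$, trivially conjugate to itself; and in case (i), your retraction argument ($r(g)\,r(Z')\,r(g)^{-1}=r(gZ'g^{-1})=gZ'g^{-1}\leq H$, together with $Z=r(Z)\leq r(Z')$ and maximality of $Z$ forcing $Z=r(Z')$) cleanly transfers toral completeness from $G$ to $A$. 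Since the corollary carries no written proof in the paper, this filling-in is exactly what the author is silently relying on.
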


\begin{lem}\label{starinj}
Let $A$ be a toral relatively hyperbolic group which is freely indecomposable relative to a subgroup $H$, let $G$ be a toral relatively hyperbolic group with a fixed embedding $i:H\emb G$, and let $f,f':A\to G$ be morphisms fixing $H$ such that $f\sim f'$. If either (i) $H$ is torally complete in $A$ and the image $i(H)$ is torally complete in $G$; or (ii) $H=\{1\}$ and $A$ contains no non-cyclic abelian subgroups, then
\begin{enumerate}[(1)]
\item
$f$ satisfies $*_{H,A,G}$ if and only if $f'$ satisfies $*_{H,A,G}$; and
\item
If $f$ satisfies $*_{H,A,G}$, then $f$ is injective if and only if $f'$ is injective.
\end{enumerate}
\end{lem}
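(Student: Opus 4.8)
Proof proposal for Lemma \ref{starinj}:

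The plan is to reduce both assertions to the two elementary moves generating $\sim$ and then to chain. Call move (M) the replacement of $f$ by $f\circ\alpha$ with $\alpha\in\Mod_H(A)$, and move (B) the replacement of $f$ by an $f'$ differing from it by a single bending move. Each move is symmetric: $\alpha^{-1}\in\Mod_H(A)$, and the reverse of a bending move — same splitting, with $z$ replaced by $z^{-1}$, and with $f'(C)=f(C)$ since $f(C)$ is abelian and contains $z$ in its centralising subgroup — is again a bending move. Since $\sim$ is the equivalence relation generated by (M) and (B), any $f\sim f'$ is joined by a finite chain $f=f_0,f_1,\dots,f_k=f'$ of single (M)/(B) moves, so it suffices to treat one move and chain: for (1) we check that $f_j$ satisfies $*_{H,A,G}$ if and only if $f_{j+1}$ does, and for (2) we first apply (1) to deduce (from $f=f_0$ satisfying $*_{H,A,G}$) that every $f_j$ does, and then check that injectivity propagates across each step.

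For an (M) move $f'=f\circ\alpha$: because the JSJ splitting $\Lambda$ of $A$ relative to $H$ is canonical, $\alpha$ restricts to conjugation on every edge group of $\Lambda$, so for each edge $e$ the subgroup $f'(A_e)=f(\alpha(A_e))$ is conjugate in $G$ to $f(A_e)$, whence $\rank(Z_G(f'(A_e)))=\rank(Z_G(f(A_e)))$. Since $\rank(Z_A(A_e))$ does not involve the morphism, the system of equalities defining $*_{H,A,G}$ is literally the same for $f$ and for $f'$, so $f$ satisfies $*_{H,A,G}$ if and only if $f'$ does. For injectivity, $\alpha$ is an automorphism, hence $\ker(f')=\alpha^{-1}(\ker f)$ and $f$ is injective if and only if $f'$ is; here neither $*_{H,A,G}$ nor hypothesis (i)/(ii) is used.

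For a (B) move, the point is that Lemma \ref{nobend} converts it into an (M) move as soon as one endpoint satisfies $*_{H,A,G}$. If $f$ satisfies $*_{H,A,G}$, then under (i) or (ii) Lemma \ref{nobend} supplies $\sigma\in\Mod_H(A)$ with $f'=f\circ\sigma$, so $(f,f')$ is an (M) move and the previous paragraph gives that $f'$ satisfies $*_{H,A,G}$ and that $f$ is injective iff $f'$ is. If instead $f'$ satisfies $*_{H,A,G}$, apply Lemma \ref{nobend} to the reverse bending move from $f'$ to $f$ to get $f=f'\circ\sigma'$ for some $\sigma'\in\Mod_H(A)$, and then the (M)-move analysis of the pair $(f',f)$ gives the same conclusions with the roles exchanged. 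If neither endpoint satisfies $*_{H,A,G}$, part (1) is vacuous for this step. This yields (1) across a single step, hence (1) in general by chaining; and for (2), the hypothesis that $f_0$ satisfies $*_{H,A,G}$ forces every $f_j$ to satisfy it by (1), so each step has an endpoint satisfying $*_{H,A,G}$, the per-step injectivity equivalences apply, and they compose. The real obstacle is the (B) step, but the substantive content there — that $*_{H,A,G}$ precludes non-trivial bending, which is exactly where toral completeness of $H$ in $A$ and of $i(H)$ in $G$ (or hypothesis (ii)) enters — is already isolated in Lemma \ref{nobend}; beyond invoking that lemma, the only things to get right are the symmetry of the bending relation, so that Lemma \ref{nobend} may be applied from whichever side satisfies $*_{H,A,G}$, and the standard fact that modular automorphisms act by conjugation on the edge groups of the canonical JSJ splitting.
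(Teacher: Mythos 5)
Your proposal is correct and follows essentially the same strategy as the paper's proof: invoke Lemma \ref{nobend} to convert bending moves into precomposition by modular automorphisms whenever $*_{H,A,G}$ holds, then observe that modular automorphisms restrict to conjugation on the JSJ edge groups, so they preserve $*_{H,A,G}$ and (being automorphisms) preserve injectivity. The paper's proof is considerably more compressed, leaving implicit both the induction along the chain of elementary moves and the symmetry of the bending relation, which you spell out carefully.
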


\begin{proof}
Suppose that $f$ satisfies $*_{H,A,G}$. Then $f$ admits no nontrivial bending moves by Lemma \ref{nobend}, so $f'=f\circ\sigma$ for some $\sigma\in\Mod_H(A)$, and thus $f'$ satisfies $*_{H,A,G}$ because $\sigma$ clearly satisfies $*_{H,A,A}$. Similarly we find that $f$ is injective if and only if $f'$ is injective.
\end{proof}

\begin{prop}\label{sq1}
Let $A$ be a toral relatively hyperbolic group which is freely indecomposable relative to a subgroup $H$, and let $G$ be a toral relatively hyperbolic group with a fixed embedding $i:H\emb G$. If $H$ is torally complete in $A$ and the image $i(H)$ is torally complete in $G$, then there exists a finite set of proper quotients of $A$ and a finite subset $H_0\subseteq H$ such that $\lang H_0\rang$ is torally complete in $A$, $i(\lang H_0\rang)$ is torally complete in $G$, and, for any non-injective morphism $f:A\to G$ fixing $H_0$ and satisfying $*_{\lang H_0\rang,A,G}$, there exists $\sigma\in\Mod_H(A)$ making $f\circ\sigma$ factor through one of these proper quotients.
\end{prop}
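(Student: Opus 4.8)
The plan is to take the finite family of proper quotients to be the maximal proper $G$-shortening quotients of $A$ relative to a carefully chosen finite set $H_0\subseteq H$, and then to push an arbitrary non-injective $f$ satisfying $*_{\lang H_0\rang,A,G}$ onto such a quotient by replacing it with a length-minimizing representative of its $\sim$-class.

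First I would fix $H_0$. Since $A$ is toral relatively hyperbolic it has finitely many conjugacy classes of maximal non-cyclic abelian subgroups, and since $H$ is torally complete in $A$ each class has a (finitely generated) representative inside $H$; adjoining these finitely many generators to $H_0$ makes $\lang H_0\rang$ torally complete in $A$. Likewise, using the finitely many conjugacy classes of maximal non-cyclic abelian subgroups of $G$ together with the toral completeness of $i(H)$ in $G$, I would adjoin to $H_0$ finitely many elements of $H$ whose $i$-images generate conjugates of representatives of those classes, making $i(\lang H_0\rang)$ torally complete in $G$. Finally I would enlarge $H_0$ so that $\lang H_0\rang$ is non-abelian (which is the relevant case — in all applications $H\elemb G$ with $G$ non-abelian forces $H$ non-abelian), so that $A$ is freely indecomposable relative to $\lang H_0\rang$, and so that $\Mod_{\lang H_0\rang}(A)=\Mod_H(A)$. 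The last two points are possible because the Grushko decomposition of $A$ relative to $\lang K\rang$ and the JSJ splitting of $A$ relative to $\lang K\rang$ stabilize as the finitely generated subgroup $K\leq H$ increases, by the finiteness of the absolute Grushko decomposition of $A$ and by acylindrical accessibility; when $H$ is itself finitely generated, as it is in every application, one simply takes $H_0$ to contain a generating set of $H$ and $\lang H_0\rang=H$.

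Now let $L_1,\ldots,L_k$, with quotient maps $q_j:A\surj L_j$, be the maximal proper $G$-shortening quotients of $A$ relative to $\lang H_0\rang$; these are finite in number by Lemma \ref{lemsq2}, are proper quotients of $A$, and, as is implicit in that lemma, every proper $G$-shortening quotient of $A$ relative to $\lang H_0\rang$ lies below some $L_j$. This is the desired family. Suppose $f:A\to G$ is non-injective, fixes $H_0$ (hence, being a homomorphism, fixes $\lang H_0\rang$), and satisfies $*_{\lang H_0\rang,A,G}$. By Lemma \ref{nobend}, applicable since $\lang H_0\rang$ is torally complete in $A$ and $i(\lang H_0\rang)$ is torally complete in $G$, the map $f$ admits no non-trivial bending move, so by Lemma \ref{starinj} the $\sim$-class of $f$ is exactly $\{\,f\circ\sigma : \sigma\in\Mod_{\lang H_0\rang}(A)\,\}$, and every member of it is non-injective and again satisfies $*_{\lang H_0\rang,A,G}$. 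Choose $\sigma$ with $|f\circ\sigma|$ minimal over this class; then $f':=f\circ\sigma$ is a short, non-injective morphism fixing $\lang H_0\rang$.

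To finish, apply $f'$ to the constant sequence $(f_n)_n$ with $f_n=f'$: it is a stable sequence of non-injective short morphisms in $\Hom(A,G)$ fixing $\lang H_0\rang$ in the limit, with stable kernel $\ker f'$, so $L:=A/\ker f'$ is a proper $G$-shortening quotient of $A$ relative to $\lang H_0\rang$ by Lemma \ref{lemsq1}. Hence $L\leq L_j$ for some $j$, that is, $\ker q_j\subseteq\ker f'$, so $f'=f\circ\sigma$ factors through $q_j$; and $\sigma\in\Mod_{\lang H_0\rang}(A)=\Mod_H(A)$ by the choice of $H_0$, which is what was required. I expect the real difficulty to be concentrated in the choice of $H_0$ — in particular, verifying that one finite $H_0$ can be made to force both toral-completeness conditions, free indecomposability of $A$ relative to $\lang H_0\rang$, and the stabilization $\Mod_{\lang H_0\rang}(A)=\Mod_H(A)$. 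Everything after that is a routine assembly of Lemmas \ref{lemsq1}, \ref{lemsq2}, \ref{nobend}, and \ref{starinj}, together with the constant-sequence observation.
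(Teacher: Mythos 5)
Your approach is genuinely different from the paper's. You try to construct $H_0$ directly, take the maximal proper $G$-shortening quotients of $A$ relative to $\lang H_0\rang$ as the finite family, and apply a constant-sequence observation together with Lemmas \ref{nobend}, \ref{starinj}, \ref{lemsq1}, \ref{lemsq2} to a shortest representative of the $\sim$-class of $f$. The paper instead argues by contradiction: assuming no such $H_0$ exists, it produces a sequence of bad non-injective morphisms $f_n$ fixing growing balls $B_n:=(B_A(r_n)\cup S)\cap H$, replaces each by a short representative, extracts a $G$-shortening quotient of $A$ relative to $H$ from the resulting sequence, and contradicts maximality via the factoring property of limit quotients. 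Your second half (the constant-sequence trick and the assembly of lemmas) is sound, modulo the small point that properness of $L=A/\ker f'$ comes from $\ker f'\neq\{1\}$ (and the definition of shortening quotient), not from Lemma \ref{lemsq1} itself.

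The gap is in the direct choice of $H_0$, precisely where you flag it. Beyond the two toral-completeness conditions you also need $A$ freely indecomposable relative to $\lang H_0\rang$ (so that Lemma \ref{lemsq2} applies to $\lang H_0\rang$), and, more seriously, you need $\Mod_{\lang H_0\rang}(A)=\Mod_H(A)$ so that the $\sigma$ you produce — a priori only in $\Mod_{\lang H_0\rang}(A)$ — lies in $\Mod_H(A)$ as the statement requires. That equality is \emph{not} a consequence of relative JSJ or Grushko stabilization: $\Mod_H(A)$ is by definition the subgroup of the absolute modular group $\Mod(A)$ that fixes $H$ pointwise, so you would be asserting that some finite subset of $H$ rigidly determines this pointwise stabilizer inside $\Mod(A)$. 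That is automatic when $H$ is finitely generated, but the proposition does not assume this, and at the point in the paper where this proposition is used one does not yet know that the elementary submodel $H$ is finitely generated (that is a consequence of the main theorem, via Corollary \ref{maincor}). The paper's contradiction scheme avoids ever needing the groups $\Mod_{\lang B_n\rang}(A)$ to stabilize at a finite stage, because the shortening quotient is extracted in the limit over a sequence whose fixed balls exhaust $H$.
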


\begin{proof}
Suppose that no such subset $H_0$ exists and let $S$ be the finite set consisting of generators from representatives of each of the conjugacy classes of non-cyclic abelian subgroups of $H$. Then there exists a sequence of integers $(r_n)_n$ such that $r_n\to\infty$ and a stable sequence $(f_n)_n$ of non-injective morphisms in $\Hom(A,G)$ such that each $f_n$ fixes the finite subset $B_n:=(B_A(r_n)\cup S)\cap H$, satisfies $*_{\lang B_n\rang,A,G}$ and does not not factor through any of the maximal $G$-shortening quotients relative to $H$, and such that $\lang B_n\rang$ is torally complete in $A$ and $i(\lang B_n\rang)$ is torally complete in $G$.

Then $H$ is fixed in the limit, and furthermore we may assume each $f_n$ is short relative to $\lang B_n\rang$ because any morphism $f'\sim f_n$ is also non-injective by Lemma \ref{starinj}. Let $q:A\surj L$ be the corresponding $G$-shortening quotient of $A$ relative to $H$. Then there exists some maximal relative $G$-shortening quotient $M$ such that $M\geq L$ by Lemma \ref{lemsq2}. However, all but finitely many $f_n$ factor through $q$ and hence factor through $A\surj M$, which is a contradictiton.
\end{proof}

\begin{prop}\label{sq2}
Let $A$ be a freely indecomposable, torsion-free hyperbolic group, and let $G$ be a toral relatively hyperbolic group. There exists a finite set of proper quotients of $A$ such that, for any non-injective morphism $f:A\to G$ satisfying $*_{\{1\},A,G}$, there exists $\sigma\in\Mod(A)$ making $f\circ\sigma$ factor through one of these proper quotients.
\end{prop}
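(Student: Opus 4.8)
The plan is to mirror the structure of the proof of Proposition \ref{sq1}, but in the much simpler absolute setting $H=\{1\}$, where there are no bending moves to worry about and toral completeness is vacuous. First I would argue by contradiction: suppose no finite set of proper quotients works. Since $A$ is finitely generated, any morphism $f:A\to G$ that fails to factor through a given finite collection of proper quotients, together with the failure of the conclusion, produces a stable sequence $(f_n)_n$ of non-injective morphisms in $\Hom(A,G)$ such that no $f_n\circ\sigma$ (for $\sigma\in\Mod(A)$) factors through any of the finitely many maximal $G$-shortening quotients of $A$ (these exist and are finite in number by Lemma \ref{lemsq2}). Here I am implicitly using that each $f_n$ may be taken to satisfy $*_{\{1\},A,G}$, which in this setting just says that for each edge group $A_e$ of the JSJ splitting $\Lambda$ of $A$ (which is cyclic, as $A$ is hyperbolic) the centralizer $Z_G(f(A_e))$ is also cyclic.

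Next I would invoke Lemma \ref{nobend} in case (ii): since $A$ is freely indecomposable and hyperbolic it contains no non-cyclic abelian subgroups, so any morphism satisfying $*_{\{1\},A,G}$ admits no non-trivial bending moves, and hence the equivalence class of $f_n$ under $\sim$ is exactly its $\Mod(A)$-orbit. By Lemma \ref{starinj}(2), every morphism $\sim$-equivalent to $f_n$ is again non-injective and again satisfies $*_{\{1\},A,G}$, so I may replace each $f_n$ by a short representative in its class without losing non-injectivity or the $*$-condition. Passing to a subsequence, $(f_n)_n$ is stable; let $q:A\surj L=A/\stabker(f_n)$ be the corresponding $G$-shortening quotient. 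By Lemma \ref{lemsq2} there is a maximal $G$-shortening quotient $M$ with $M\geq L$, i.e. $q$ factors as $A\surj M\surj L$. But by Remark (3) following the definition of shortening quotients, all but finitely many $f_n$ factor through $q$, hence through $A\surj M$; and since each $f_n$ was chosen short, this contradicts the assumption that no $f_n\circ\sigma$ factors through a maximal $G$-shortening quotient.

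I expect the main subtlety — rather than a genuine obstacle — to be the bookkeeping that guarantees we may simultaneously arrange the $f_n$ to be non-injective, to satisfy $*_{\{1\},A,G}$, and to be short, while still contradicting a \emph{fixed} finite list of proper quotients. The point is that the finiteness of the list of maximal $G$-shortening quotients (Lemma \ref{lemsq2}) is what makes the "diagonal" argument go through: if the proposition failed, we could refuse all of them at once. Everything else is a direct transcription of the absolute case of the preceding machinery, with the non-degeneracy (no bending, injectivity stable under $\sim$) supplied by Lemmas \ref{nobend} and \ref{starinj} in case (ii). No use of the shortening theorem \ref{shortening} itself is needed here beyond what is already packaged into the existence and finiteness of shortening quotients.
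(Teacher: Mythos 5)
Your proposal is correct and follows essentially the same route as the paper's proof: argue by contradiction with a stable sequence of non-injective morphisms satisfying $*_{\{1\},A,G}$ that avoid all maximal $G$-shortening quotients, replace each by a short representative (justified since Lemma~\ref{nobend}(ii) kills bending moves and Lemma~\ref{starinj} preserves non-injectivity and the $*$-condition under $\sim$), and then contradict the finiteness of maximal $G$-shortening quotients from Lemma~\ref{lemsq2} together with the factorization remark. If anything your writeup is more careful than the paper's, which carries some leftover notation ($H$ fixed in the limit, $\lang B_n\rang$) from the relative case of Proposition~\ref{sq1}.
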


\begin{proof}
Suppose the result does not hold. Then there exists a stable sequence $(f_n)_n$ of non-injective morphisms in $\Hom(A,G)$ such that each $f_n$ satisfies $*_{\lang 1\rang,A,G}$, and does not not factor through any of the maximal $G$-shortening quotients relative to $\{1\}$.

Then $H$ is fixed in the limit, and furthermore we may assume each $f_n$ is short relative to $\lang B_n\rang$ because any morphism $f'\sim f_n$ is also non-injective by Lemma \ref{starinj}. Let $q:A\surj L$ be the corresponding $G$-shortening quotient of $A$ relative to $H$. Then there exists some maximal relative $G$-shortening quotient $M$ such that $M\geq L$ by Lemma \ref{lemsq2}. However, all but finitely many $f_n$ factor through $q$ and hence factor through $A\surj M$, which is a contradictiton.
\end{proof}

\begin{prop}\label{hypembs}
Let $A$ be a freely indecomposable, torsion-free hyperbolic group and let $G$ be a toral relatively hyperbolic group. There exists a finite set of embeddings $A\emb G$ such that, for any embedding $i:A\emb G$ satisfying $*_{\{1\},A,G}$, there exists $\sigma\in\Mod(A)$ and $g\in G$ so that $\iota_g\circ i\circ\sigma$ is equal to one of the embeddings in the finite set.
\end{prop}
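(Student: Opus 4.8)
The plan is to combine the finiteness of maximal $G$-shortening quotients (Lemma~\ref{lemsq2}, or rather its consequence Proposition~\ref{sq2}) with a compactness/limiting argument to upgrade from ``factors through finitely many proper quotients'' to ``equals, up to $\Mod(A)$ and conjugation, one of finitely many embeddings.'' The key point is that embeddings do \emph{not} factor through proper quotients, so Proposition~\ref{sq2} cannot directly apply to them; instead one argues by contradiction using a sequence of embeddings and the Shortening Argument (Theorem~\ref{shortening}) together with Theorem~\ref{converge}.

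\begin{proof}
Suppose no such finite set exists. Then we can find infinitely many embeddings $i_n:A\emb G$ satisfying $*_{\{1\},A,G}$, no two of which are related by precomposition with an element of $\Mod(A)$ and postcomposition with an inner automorphism of $G$. By Corollary~\ref{cor58} (case (ii)), each $i_n$ admits no non-trivial bending moves, so the equivalence class of $i_n$ under $\sim$ consists precisely of the maps $\iota_g\circ i_n\circ\sigma$ with $g\in G$, $\sigma\in\Mod(A)$; in particular we may replace each $i_n$ by a short representative of its class, and by Lemma~\ref{starinj} this short representative is still an embedding satisfying $*_{\{1\},A,G}$. Since the $i_n$ lie in distinct $\Mod(A)$-and-conjugacy classes and each class contributes only finitely many lengths' worth of short maps only up to the (finite, since $G$ is hyperbolic relative to finitely many parabolics and $A$ is finitely generated) ambiguity of conjugation by elements moving the basepoint a bounded amount, we obtain a sequence of short embeddings with $|i_n|\to\infty$ (after discarding the finitely many of each bounded length). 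This is the sequence of distinct morphisms to which we apply Theorem~\ref{converge}: it converges (after passing to a subsequence) to an isometric $A$-action on an $\bbr$-tree $T$ with no global fixed point, trivial tripod stabilizers, and abelian arc stabilizers. The action is faithful because each $i_n$ is injective, so the stable kernel is trivial and hence $K=\{1\}$: indeed any element of $K$ would stabilize a tripod by Theorem~\ref{converge}(5) and thus lie in $\stabker(i_n)=\{1\}$.

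Now Theorem~\ref{shortening} applies: $A$ is freely indecomposable (relative to $\{1\}$), $H=\{1\}$ is vacuously torally complete, and the limiting action is faithful, so all but finitely many $i_n$ fail to be short. This contradicts our choice of the $i_n$ as short representatives, completing the argument.
\end{proof}

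The main obstacle is the bookkeeping in the first paragraph: one must check that ``distinct classes under $\Mod(A)$ and conjugation'' really does force the lengths of short representatives to be unbounded. This uses that for a fixed length bound $\ell$ there are only finitely many morphisms $A\to G$ of length $\le\ell$ up to the action of $G$ by conjugation (a standard consequence of local finiteness of the relevant spaces together with $A$ being finitely generated), so infinitely many classes cannot all have short representatives of bounded length. The remaining steps --- verifying the hypotheses of Theorems~\ref{converge} and~\ref{shortening}, and invoking Corollary~\ref{cor58} to pin down the $\sim$-class --- are routine given the machinery already assembled in the excerpt.
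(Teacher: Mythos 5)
Your approach matches the paper's: assume there is no finite set, extract a sequence of distinct non-equivalent short embeddings (using Corollary~\ref{cor58} and Lemma~\ref{starinj}), pass to a limiting $\bbr$-tree action, and invoke the shortening argument for a contradiction.

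There is, however, one genuine gap in the citations. Both Theorem~\ref{converge} and Theorem~\ref{shortening} as stated in this paper require $H$ to be a \emph{non-abelian} subgroup; with $H=\{1\}$ those hypotheses fail, so you cannot invoke them as written. (The non-abelian-ness is used in the proof of Theorem~\ref{converge} to rule out a global fixed point and to rule out $T$ being a line, since without a non-trivial $H$ fixed in the limit one cannot keep the basepoint while preventing it from being fixed or central.) The paper sidesteps this by citing \cite[Theorem~3.7]{Groves05} directly --- the original non-relative shortening argument for toral relatively hyperbolic targets --- where one is free to recenter the basepoint by conjugation. Your proof needs the same substitution; note that Corollary~\ref{cor58}(ii) and Lemma~\ref{starinj}(ii) are specifically formulated to cover this $H=\{1\}$ case, so those citations are fine.

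A smaller point: the bookkeeping in your first paragraph about lengths going to infinity is correct in spirit but over-engineered. You don't need any conjugacy ambiguity: for a fixed bound $\ell$, the ball $B_G(\ell)$ is finite (Cayley graph with respect to a finite generating set is locally finite), so there are only finitely many morphisms $A\to G$ with $|f|\le\ell$ outright. Since the short representatives are pairwise distinct, the lengths must be unbounded; pass to a subsequence tending to infinity and proceed. The paper leaves this implicit in the phrase ``stable sequence of distinct\ldots embeddings,'' so your instinct to spell it out is reasonable, just simplify the finiteness claim.
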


\begin{proof}
Suppose no such set of embeddings exists. Then there exists a stable sequence $(j_n)_n$ in $\Hom(A,G)$ of distinct, non-equivalent embeddings satisfying $*_{\{1\},A,G}$ with $\stabker(j_n)=\{1\}$. We may also assume that each $j_n$ is short relative to $\{1\}$ because any morphism $f'\sim j_n$ is also an embedding by Lemma \ref{starinj}. Furthermore, no $j_n$ will admit non-trivial bending moves by Corollary \ref{cor58}. The sequence will converge to a faithful isometric $G$-action on an $\bbr$-tree $T$, so not all $j_n$ could have been short by \cite[Theorem 3.7]{Groves05}.
\end{proof}

\subsection{Not the co-Hopf property}

\begin{dfn}
A group $G$ is \emph{co-Hopf} (or has the co-Hopf property) if any injective morphism $G\emb G$ is an isomorphism. Similarly, if $H$ is a subgroup of $G$, then $G$ is \emph{co-Hopf relative to $H$} if any injective morphism $G\emb G$ which restricts to the identity on $H$ is an isomorphism.
\end{dfn}

\begin{rmk}
If $G$ has a splitting $\Lambda$ over abelian groups which contains a non-incidental abelian vertex subgroup, then $G$ is not co-Hopf: Suppose $Z=\bar{I}(Z)\times B$ is a non-incidental abelian vertex subgroup. We can define a monomorphism $G\emb G$ which is not an epimorphism by mapping $B$ into a proper finite-index subgroup of itself and restricting to the identity map elsewhere.
\end{rmk}

Unfortunately we were unable to prove the more general relative co-Hopf property we had hoped for. The co-Hopf property for torsion-free hyperbolic groups was used in \cite{Perin11} to rule out the existence of injective preretractions, but we found a workaround using the property $*_{H,A,G}$. We do not know if there exist injective morphisms $A\emb G$ which fix $H$ but do not satisfy $*_{H,A,G}$, but it is clear that the usual embedding $i:A\emb G$ satisfies $*_{H,A,G}$, and hence so would $i\circ\alpha$ for any $\alpha\in\Mod_H(G)$. Then any morphism which is $\Lambda$-related to $i$ also satisfies $*_{H,A,G}$, and hence preretractions satisfy $*_{H,A,G}$.

\begin{lem}\label{emb2}
Let $G$ be a toral relatively hyperbolic group, let $H$ be a non-abelian, torally complete subgroup of $G$, and let $A$ be a toral relatively hyperbolic retract of $G$ which contains and is freely indecomposable relative to $H$. There exists a finite set $H_0\subseteq H$ such that any embedding $A\emb G$ satisfying $*_{\lang H_0\rang,A,G}$ fixes $H$.
\end{lem}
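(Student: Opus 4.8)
The plan is to argue by contradiction, using a limiting action on an $\bbr$-tree and the shortening argument. First I would make a careful choice of the finite set: fix once and for all a finite $F_0\subseteq H$ which (a) contains generators of a complete set of conjugacy representatives for the maximal non-cyclic abelian subgroups of $H$, and (b) is large enough that $A$ is freely indecomposable relative to $\lang F_0\rang$ --- such an $F_0$ exists because free indecomposability relative to a subgroup is already witnessed on a finite subset. Using that $A$ is a retract of $G$ and $H$ is torally complete in $G$, one checks (by pushing the conjugating element through the retraction) that $H$ is torally complete in $A$; then, by (a), for every finite $F$ with $F_0\subseteq F\subseteq H$ the subgroup $\lang F\rang$ is torally complete in $A$ and $i(\lang F\rang)$ is torally complete in $G$, and, by (b), $A$ is freely indecomposable relative to $\lang F\rang$. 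Set $H_n:=(B_A(n)\cap H)\cup F_0$; then for all large $n$ the subgroup $\lang H_n\rang$ is non-abelian, $A$ is freely indecomposable relative to $\lang H_n\rang$, both $\lang H_n\rang$ and $i(\lang H_n\rang)$ are torally complete (in $A$ and $G$ respectively), and $\bigcup_n\lang H_n\rang=H$.

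Now suppose the lemma is false, so that for each $n$ there is an embedding of $A$ into $G$ satisfying $*_{\lang H_n\rang,A,G}$ which does not fix $H$; among all such embeddings choose $j_n$ of minimal length. The sequence $(j_n)_n$ fixes $H$ in the limit (given finite $F\subseteq H$, for $n$ large $F\subseteq\lang H_n\rang$ and $j_n$ fixes $\lang H_n\rang$), and, after passing to a subsequence, its terms are pairwise distinct: if $j_n$ equalled a fixed $j$ for infinitely many $n$, then $j$ would satisfy $*_{\lang H_n\rang,A,G}$ for those $n$, hence fix $\lang H_n\rang$ for arbitrarily large $n$, hence fix $H$, contradicting the choice of $j_n$. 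By Theorem \ref{converge} (with the finitely generated group $A$, the toral relatively hyperbolic group $G$, and the non-abelian subgroup $H$) we may pass to a further subsequence converging to a faithful isometric action of $A$ on an $\bbr$-tree $T$ with no global fixed point, in which $H$ fixes the basepoint, tripod stabilizers are trivial, arc stabilizers are abelian, and the action is superstable.

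The core step is a Rips-machine analysis of $T$. There are no Levitt components, since $A$ is freely indecomposable relative to $H$ and $H$ is elliptic; and there are no axial components, since every non-cyclic abelian subgroup of $A$ is conjugate into the torally complete subgroup $\lang H_n\rang\le H$ and hence is elliptic. So $T$ has only surface (IET) and discrete components, and the shortening argument --- carried out as in the proof of Theorem \ref{shortening} and \cite[Theorem 3.7]{Groves05}, with $H$ as the elliptic subgroup determined by $T$ --- yields, for all large $n$, a modular automorphism $\alpha_n\in\Mod_H(A)$ with $|j_n\circ\alpha_n|<|j_n|$; no bending moves intervene, because $j_n$ satisfies $*_{\lang H_n\rang,A,G}$ with $\lang H_n\rang$ torally complete in $A$ and $i(\lang H_n\rang)$ torally complete in $G$, so Lemma \ref{nobend}(i) applies. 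Since $\Mod_H(A)\le\Mod_{\lang H_n\rang}(A)$, the morphism $j_n\circ\alpha_n$ differs from $j_n$ by precomposition with an element of $\Mod_{\lang H_n\rang}(A)$; hence by Lemma \ref{starinj} it is again an embedding satisfying $*_{\lang H_n\rang,A,G}$, and --- because $\alpha_n$ restricts to the identity on $H$ --- it still does not fix $H$. Thus $j_n\circ\alpha_n$ is a strictly shorter counterexample at level $n$, contradicting the minimality of $|j_n|$; this proves the lemma.

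The step I expect to be the main obstacle is guaranteeing that the automorphism produced by the shortening argument can be taken to fix all of $H$ --- not merely $\lang H_n\rang$ --- and to be realized without a bending move. If the shortening automorphism fixed only $\lang H_n\rang$, composing with it could undo the failure of $j_n$ to fix $H$, and the minimality argument would break down. This is exactly what forces the choices above: toral completeness of $\lang H_n\rang$ rules out axial components and makes Lemma \ref{nobend} applicable (killing bending moves), free indecomposability of $A$ relative to $\lang H_n\rang$ makes the JSJ --- hence the condition $*_{\lang H_n\rang,A,G}$ --- meaningful and rules out Levitt components, and the fact that the elliptic subgroup carried by $T$ is all of $H$ (not just $\lang H_n\rang$) is what permits $\alpha_n$ to be chosen in $\Mod_H(A)$. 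A subsidiary point needing care is the initial claim that free indecomposability of $A$ relative to $H$ is witnessed on a finite subset, together with the verification that $H$ is torally complete in $A$.
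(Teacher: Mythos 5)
Your proposal is correct and takes essentially the same route as the paper: assume no such finite set exists, build a sequence of embeddings each fixing a growing finite piece of $H$ but not all of $H$, pass to a limiting faithful $\bbr$-tree action via Theorem \ref{converge}, and derive a contradiction from the shortening argument. The only cosmetic difference is that you phrase the contradiction in terms of choosing a minimal-length counterexample at each stage and producing a strictly shorter one, whereas the paper replaces each $i_n$ by a short representative in its $\sim$-class relative to $\lang B_n\rang$ (justified by Lemma \ref{starinj}) and then cites Theorem \ref{shortening} to conclude that not all $i_n$ can be short; these are the same idea. Your extra commentary on why the shortening automorphism lands in $\Mod_H(A)$ rather than merely $\Mod_{\lang H_n\rang}(A)$, and on the role of Lemma \ref{nobend}, spells out steps the paper leaves implicit.
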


\begin{proof}
Suppose that no such subset $H_0$ exists and let $S$ be the finite set consisting of generators from representatives of each of the conjugacy classes of non-cyclic abelian subgroups of $H$. Then there exists a sequence of integers $(r_n)_n$ such that $r_n\to\infty$ and a sequence $(i_n)_n$ of distinct embeddings in $\Hom(A,G)$ such that each $i_n$ fixes the finite subset $B_n:=(B_A(r_n)\cup S)\cap H$ but not $H$ and satisfies $*_{\lang B_n\rang,A,G}$, and such that $\lang B_n\rang$ is torally complete in $G$. Then $H$ is fixed in the limit, and furthermore we may assume each $i_n$ is short relative to $\lang B_n\rang$ because any morphism $f'\sim i_n$ is also an embedding. Then $\stabker(i_n)=\{1\}$ is trivial, so this sequence converges to a faithful action on an $\bbr$-tree (possibly after passing to a subsequence) by Theorem \ref{converge}, so by Theorem \ref{shortening} not all $i_n$ could have been short, which is a contradiction.
\end{proof}

\begin{prop}\label{prop5.16}
Let $H$ be a proper torally complete subgroup of a toral relatively hyperbolic group $A$ which is freely indecomposable relative to $H$, and let $\Lambda$ be the JSJ splitting of $A$ relative to $H$. Then there exists a finite subset $H_0\subseteq H$ and a finite set $K_0\subseteq A\setminus\{1\}$ such that for any morphism $f:A\to H$ which fixes $H_0$ and satisfies $*_{\lang H_0\rang,A,H}$, there exists $\sigma\in\Mod_H(A)$ and $k\in K_0$ such that $f\circ\sigma(k)=1$.
\end{prop}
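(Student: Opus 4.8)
The plan is to argue by contradiction via the compactness-and-shortening scheme already used for Proposition~\ref{sq1} and Lemma~\ref{emb2}. Suppose no such $H_0$ and $K_0$ exist. Fix a finite set $S\subseteq H$ containing generators of representatives of the conjugacy classes of maximal non-cyclic abelian subgroups of $H$, together with a finite subset witnessing that $A$ is freely indecomposable relative to $H$, and set $B_n:=(B_A(r_n)\cup S)\cap H$ for some sequence $r_n\to\infty$. Applying the (assumed) failure of the conclusion to the choice $H_0=B_n$ and $K_0=B_A(r_n)\setminus\{1\}$ gives, for each $n$, a morphism $f_n:A\to H$ that fixes $B_n$, satisfies $*_{\lang B_n\rang,A,H}$, and has the ``no-kill'' property: $f_n\circ\sigma(k)\neq 1$ for every $\sigma\in\Mod_H(A)$ and every $k\in B_A(r_n)\setminus\{1\}$. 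For $n$ large, $\lang B_n\rang$ is non-abelian and torally complete in both $A$ and $H$, and $A$ is freely indecomposable relative to $\lang B_n\rang$; we may assume $H$ itself is non-abelian, the abelian case not arising in the applications. Replacing $f_n$ by a shortest member of its $\sim$-equivalence class relative to $\lang B_n\rang$ preserves $*_{\lang B_n\rang,A,H}$ by Lemma~\ref{starinj}; by Lemma~\ref{nobend} such a morphism admits no non-trivial bending moves, and every modular automorphism that then occurs fixes $H$ (it is a Dehn twist in a splitting of $A$ relative to $H$, on whose fixed side $H$ lies), so the no-kill property survives and we may assume each $f_n$ is short.

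The second step is to dispose of the degenerate possibilities. Taking $\sigma=\mathrm{id}$ in the no-kill property, $\ker f_n$ contains no non-trivial element of length $\leq r_n$; since $r_n\to\infty$, the stable kernel of $(f_n)$, and of any subsequence, is trivial. Moreover no single morphism equals $f_n$ for infinitely many $n$: such a morphism would fix $\bigcup_n B_n\supseteq H$ and hence be a retraction $A\to H$, and it would be injective (its kernel containing no non-trivial short element); but an injective retraction of $A$ onto a subgroup is the identity of $A$, forcing $A=H$ and contradicting the properness of $H$. So, after passing to a subsequence, the $f_n$ are pairwise distinct, and, fixing $H$ in the limit with $H$ non-abelian, Theorem~\ref{converge} (with $\Gamma=H$ and $i$ the inclusion) produces a faithful isometric action of $A$ on an $\bbr$-tree $T$ with no global fixed point, abelian arc stabilizers, trivial tripod stabilizers, superstable, with $H$ fixing the basepoint.

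Finally I would apply the Shortening Argument, Theorem~\ref{shortening}, with $G=A$, the non-abelian subgroup $H$ (relative to which $A$ is freely indecomposable by hypothesis), and $\Gamma=H$ — which is torally complete in itself and toral relatively hyperbolic in the situations of interest, being a retract of a toral relatively hyperbolic group. The second step supplies exactly the hypotheses, so all but finitely many $f_n$ are not short, contradicting our arrangement. This contradiction establishes the existence of the required finite sets $H_0$ and $K_0$.

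I expect the main obstacle to be the second step, and in particular ensuring the limiting action is genuinely \emph{faithful} with $H$ elliptic and fixed-point free. Should the stable kernel resist being made trivial after shortening, the fallback is to let $L=A/\stabker(f_n)$ be the associated $H$-shortening quotient of $A$ relative to $H$ and to choose $K_0$ beforehand to contain a non-trivial element of the kernel of each of the finitely many maximal proper $H$-shortening quotients (which exist by Lemma~\ref{lemsq2}); a non-trivial $L$ would then already violate the no-kill property. The new ingredient replacing the relative co-Hopf property we could not establish is the elementary observation that an injective retraction onto a \emph{proper} subgroup is impossible — which is exactly where the properness of $H$ is used — while the accompanying delicate point is keeping track of which finitely generated subgroup ($H$ versus $\lang B_n\rang$) one fixes and shortens relative to so that Lemma~\ref{nobend}, Theorem~\ref{converge}, and Theorem~\ref{shortening} can all be applied together.
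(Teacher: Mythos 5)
Your proposal takes a genuinely different route from the paper's, which is a short reduction to Proposition~\ref{sq1} and Lemma~\ref{emb2}: choose $H_0$ to satisfy both (the latter applied to morphisms $A\to A$) and choose $K_0$ from the kernels of the proper quotients in Proposition~\ref{sq1}. If $f$ is non-injective, Proposition~\ref{sq1} applies directly; if $f$ is injective, then the embedding $g:=i\circ f:A\to A$ satisfies $*_{\lang H_0\rang,A,A}$, so Lemma~\ref{emb2} forces $g$, and hence $f$, to fix $H$ pointwise, and an injective $f:A\to H$ fixing the \emph{proper} subgroup $H$ pointwise forces $A=H$, a contradiction. You instead re-run the compactness-and-shortening scheme from scratch, folding both cases into one limit argument, and deploy the same elementary observation about injective retractions inside your constant-subsequence step. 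What the paper's modular structure buys is that the delicate ``which subgroup one shortens relative to'' bookkeeping is confined to the two cited lemmas, so the proof of Proposition~\ref{prop5.16} itself is essentially pure logic.

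The gap in your primary route --- which you partially flag at the end --- is the survival of the no-kill property and hence the faithfulness of the limit action. You shorten within the $\sim$-class relative to $\lang B_n\rang$, so $\tilde f_n=f_n\circ\sigma_n$ with $\sigma_n\in\Mod_{\lang B_n\rang}(A)$; but the no-kill property you extracted from the negated conclusion is quantified over $\Mod_H(A)$, and to deduce $\ker\tilde f_n\cap B_A(r_n)=\{1\}$ you would need $\sigma_n\in\Mod_H(A)$. Your parenthetical claim that the modular automorphisms which occur must fix $H$ is an assertion, not an argument: $\Mod_{\lang B_n\rang}(A)$ contains Dehn twists in abelian splittings of $A$ in which $\lang B_n\rang$ is elliptic but $H$ need not be, and the shortest representative of the $\sim_{\lang B_n\rang}$-class has no a priori reason to be reached by such a restricted automorphism. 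The repair is to shorten only within the $\Mod_H(A)$-orbit (Lemma~\ref{nobend} rules out bending moves, and being $\Mod_H$-short is exactly what Theorem~\ref{shortening} contradicts), but this should be made explicit rather than waved at. Your stated fallback via the maximal shortening quotients of Lemma~\ref{lemsq2} is Proposition~\ref{sq1} re-derived; citing it and invoking Lemma~\ref{emb2} separately for the injective case, as the paper does, is shorter and sidesteps the subtlety entirely.
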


\begin{proof}
Choose $H_0$ to satisfy the conditions of Proposition \ref{sq1} and Lemma \ref{emb2} for morphisms $A\to A$ satisfying $*_{\lang H_0\rang,A,A}$ and choose $K_0$ to consist of a single non-trivial element from the kernel of each of the proper quotients of Proposition \ref{sq1}. If $f$ is non-injective the result follows immediately by Proposition \ref{sq1}, so suppose that there exists an injective $f:A\emb H$ as above and let $i:H\emb A$ be the usual embedding. Then $g:=i\circ f:A\emb A$ fixes $H_0$ and satisfies $*_{\lang H_0\rang,A,A}$. Then $g$ and hence $f$ must both fix $H$ by Lemma \ref{emb2}, so $f$ is surjective and could not have been injective because $H$ is a proper subgroup fixed by $f$.
\end{proof}

\section{Preretractions from elementary embeddings}
\label{sec6}
\begin{lem}[{Compare \cite[Lemma 5.18]{Perin11}}]\label{lambdarel}
Let $A=\lang\ua\rang$ be a finitely generated group with a JSJ-like splitting $\Lambda$ such that all vertex and edge groups are finitely generated. There exists a first-order formula $\rho_\Lambda(\ux,\uy)$ such that for any morphisms $f,f':A\to G$ given by $\ua\mapsto\us$ and $\ua\mapsto\ut$, $f$ and $f'$ are $\Lambda$-related if and only if $G\models\rho_\Lambda(\us,\ut)$.
\end{lem}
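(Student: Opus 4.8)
The plan is to translate each clause of the definition of $\Lambda$-relatedness directly into first-order logic, exploiting that all the data involved is finite. First I would fix a conjugacy representative for each vertex group and each edge group of $\Lambda$; since $A=\lang\ua\rang$ is finitely generated (so that, the splitting being minimal, the underlying graph of $\Lambda$ is finite) and every vertex group $A_v$ and edge group $A_e$ is finitely generated by hypothesis, I can choose words $w^v_1(\ua),\dots,w^v_{k_v}(\ua)$ generating $A_v$ and words $w^e_1(\ua),\dots,w^e_{k_e}(\ua)$ generating $A_e$. These finitely many words are the only data about $\Lambda$ to which $\rho_\Lambda$ will refer. I would note at the outset that replacing a vertex or edge group by a conjugate does not affect $\Lambda$-relatedness --- if $f'|_{A_e}=\iota_{g}\circ f|_{A_e}$ then $f'|_{aA_ea^{-1}}=\iota_{g'}\circ f|_{aA_ea^{-1}}$ with $g'=f'(a)\,g\,f(a)^{-1}$, and similarly for vertices --- so the choice of representatives is immaterial and $\rho_\Lambda$ is well defined.

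Next, recalling that $f$ sends $w(\ua)$ to $w(\us)$ and $f'$ sends $w(\ua)$ to $w(\ut)$, I would write down the clauses. For each edge $e$ and each vertex $v$ that is exceptional-surface-type or non-surface-type, the clause ``$f'$ agrees with $\iota_{g}\circ f$ on $A_e$ (resp.\ on $A_v$) for some $g\in G$'' becomes
\[
\exists z\ \bigwedge_{i} w^e_i(\uy)=z\,w^e_i(\ux)\,z^{-1}\qquad\Bigl(\text{resp.\ }\exists z\ \bigwedge_{i} w^v_i(\uy)=z\,w^v_i(\ux)\,z^{-1}\Bigr),
\]
using a fresh bound variable $z$ for each edge and vertex; because conjugation by a fixed element is a homomorphism, simultaneously conjugating a generating set of $A_e$ (resp.\ $A_v$) is the same as conjugating the whole subgroup, so this captures the clause exactly. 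For each non-exceptional surface-type vertex $v$, the clause ``$f(A_v)$ is non-abelian if and only if $f'(A_v)$ is non-abelian'' I would encode using the observation that a subgroup generated by finitely many elements is non-abelian precisely when two of those generators fail to commute: thus ``$f(A_v)$ is non-abelian'' is expressed by $\bigvee_{i<j}[w^v_i(\ux),w^v_j(\ux)]\neq1$, and the clause becomes the biconditional of this formula with the one obtained by replacing $\ux$ throughout by $\uy$.

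Finally I would set $\rho_\Lambda(\ux,\uy)$ to be the conjunction of all of these --- one conjunct per edge, one per exceptional-surface or non-surface vertex, one per non-exceptional surface vertex --- which is a genuine first-order formula since the graph of $\Lambda$ is finite. Verifying that $G\models\rho_\Lambda(\us,\ut)$ if and only if $f$ and $f'$ are $\Lambda$-related is then a routine clause-by-clause unwinding of the definitions above. I do not anticipate a real obstacle here: the entire content is that each defining clause of $\Lambda$-relatedness is first-order once generating words for the finitely many, finitely generated vertex and edge groups have been fixed. The only clause that is not literally a conjugacy condition is the last one, concerning non-abelianness of an image subgroup, and that is exactly what the finite-generation observation in the previous paragraph is designed to handle.
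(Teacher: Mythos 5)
Your proposal is correct and follows essentially the same route as the paper: fix finite generating words for each vertex and edge group, express the conjugacy clauses with a bound variable per edge/vertex, and express non-abelianness of a finitely generated subgroup via non-commuting generators. One small point in your favor: for a non-exceptional surface-type vertex $v$ you correctly use the \emph{biconditional} ``$f(A_v)$ non-abelian $\iff$ $f'(A_v)$ non-abelian,'' matching the definition of $\Lambda$-relatedness, whereas the paper's $\rho_v$ records only the single implication $\neg\alpha(\ub(\ux))\implies\neg\alpha(\ub(\uy))$, which is strictly weaker than the stated equivalence (though it suffices in the paper's applications, where $f$ is the inclusion and $A_v$ is automatically non-abelian).
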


\begin{proof}
Given a tuple $\ux=(x_1,\ldots,x_n)$, define
\[\alpha(\ux)\ :\ \bigwedge_{1\leq i<j\leq n} [x_i,x_j]=1.\]
Then $G\models\alpha(\ux)$ if and only if the subgroup $\lang\ux\rang$ is abelian.

For each $v\in V(\Lambda)$ and each $e\in E:=E(\Lambda)$ choose finite generating sets $S_v:=\{\ub=\ub(\ua)\}$ and $S_e:=\{\uc=\uc(\ua)\}$ so that $A_v=\lang S_v\rang$ and $A_e=\lang S_e\rang$. Partition $V(\Lambda)=U\amalg V$ so that $V$ consists of the non-exceptional surface-type vertices of $\Lambda$ and $U$ consists of all other vertices.

For each $u\in U$ and $e\in E$ define
\begin{align*}
\rho_u(\ux,\uy)\ &:\ \exists z\bigwedge_{b\in S_u} b(\ux)=zb(\uy)z^{-1},\\
\rho_e(\ux,\uy)\ &:\ \exists z\bigwedge_{c\in S_e} c(\ux)=zc(\uy)z^{-1},
\end{align*}
and for each $v\in V$ let $S_v=\{\ub=\ub(\ua)\}$ and define
\[\rho_v(\ux,\uy)\ :\ \neg\alpha(\ub(\ux))\implies\neg\alpha(\ub(\uy)).\]
Then the result follows if we define
\[\rho_\Lambda(\ux,\uy)\ :\ \left(\bigwedge_{u\in U}\rho_u(\ux,\uy)\right)\wedge\left(\bigwedge_{v\in V}\rho_v(\ux,\uy)\right)\wedge\left(\bigwedge_{e\in E}\rho_e(\ux,\uy)\right).\]
\end{proof}

\begin{lem}\label{epsilon}
Let $A=\lang\ua\rang$ be a finitely generated group with the JSJ-like splitting $\Lambda$ and let $G$ be a torsion-free commutative-transitive group with all abelian subgroups finitely generated. There exists a first-order formula $\epsilon_\Lambda(\ux)$ such that for any morphism $f:A\to G$ given by $\ua\mapsto\us$ with $f(Z_A(A_e))\neq\{1\}$ for all $e\in E(\Lambda)$, $f$ satisfies
\[\rank(Z_A(A_e))=\rank(Z_G(f(A_e)))\]
for all $e\in E(\Lambda)$ if and only if $G\models\epsilon_\Lambda(\us)$.
\end{lem}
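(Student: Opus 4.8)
The plan is to build $\epsilon_\Lambda(\ux)$ as a finite conjunction, over the (finitely many, by JSJ-likeness and finite generation) edges $e\in E(\Lambda)$, of formulas $\epsilon_e(\ux)$ asserting that the centralizer in $G$ of the image of $A_e$ has a prescribed rank. The key observation is that all the relevant ranks are already determined by $A$ and $\Lambda$: for each edge $e$ let $r_e:=\rank(Z_A(A_e))$, a fixed natural number $\geq 1$ (edge groups are abelian and finitely generated, and $A$ is torsion-free so centralizers are free abelian). So what we need is, for each $e$, a first-order way to say ``$\rank(Z_G(f(A_e)))=r_e$'' using the generators $\us$ of the image.

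First I would invoke Lemma~\ref{ranks}: since $G$ is torsion-free, commutative-transitive, with all abelian subgroups finitely generated, there are formulas $\cz_r(x)$ such that $G\models\cz_r(g)$ iff $Z_G(g)$ is free abelian of rank $r$, for any $g\neq 1$. The subtlety is that $\cz_r$ takes a \emph{single} element, whereas $A_e$ may be generated by several of the $a_i$; however, by commutative-transitivity the centralizer of a nontrivial abelian subgroup equals the centralizer of any one of its nontrivial elements (this is the Remark following the CSA lemma: $Z_G(g)$ is the unique maximal abelian subgroup containing $g$, and if $g,g'\neq 1$ commute they lie in the same maximal abelian subgroup). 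Pick, once and for all, a generating tuple $\uc_e=\uc_e(\ua)$ of $A_e$ and a distinguished generator $c_e^{(1)}(\ua)$ which is nontrivial in $A$ (such exists since $A_e$ is nontrivial; nontriviality is a fixed fact about $A$, not something $G$ must verify). Under the hypothesis $f(Z_A(A_e))\neq\{1\}$ — equivalently, as I will note, $f(c_e^{(1)}(\ua))\neq 1$, since $c_e^{(1)}(\ua)\in A_e\le Z_A(A_e)$, and conversely if $f$ kills $A_e$ it kills $Z_A(A_e)$... wait, that's not quite an equivalence, so I would instead simply add the clause $c_e^{(1)}(\ux)\neq 1$ to $\epsilon_e$ and note that under the stated hypothesis $f(Z_A(A_e))\neq\{1\}$ one has $f(A_e)\neq\{1\}$ hence $f(c_e^{(1)}(\ua))\neq 1$ — we then have $Z_G(f(A_e))=Z_G(f(c_e^{(1)}(\ua)))$ by commutative-transitivity. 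Hence I define
\[
\epsilon_e(\ux)\ :\ (c_e^{(1)}(\ux)\neq 1)\wedge\cz_{r_e}\big(c_e^{(1)}(\ux)\big),
\qquad
\epsilon_\Lambda(\ux)\ :\ \bigwedge_{e\in E(\Lambda)}\epsilon_e(\ux).
\]

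It then remains to check the stated equivalence. Given $f:A\to G$, $\ua\mapsto\us$, with $f(Z_A(A_e))\neq\{1\}$ for all $e$: for each $e$, $f(A_e)$ is a nontrivial abelian subgroup of $G$ generated by $f(\uc_e(\ua))=\uc_e(\us)$, with distinguished nontrivial element $c_e^{(1)}(\us)$; by commutative-transitivity $Z_G(f(A_e))=Z_G(c_e^{(1)}(\us))$; by Lemma~\ref{ranks}, $G\models\cz_{r_e}(c_e^{(1)}(\us))$ iff this centralizer is free abelian of rank $r_e$, i.e.\ iff $\rank(Z_G(f(A_e)))=r_e=\rank(Z_A(A_e))$. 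Conjoining over $e$ gives the claim. The main obstacle — really the only nonroutine point — is the passage from the possibly multi-generator edge group $A_e$ to a single element so that Lemma~\ref{ranks} applies; this is exactly where commutative-transitivity of $G$ (available since $G$ is CSA, being toral relatively hyperbolic, or more weakly by the hypotheses stated) is used, and where one must be slightly careful that the ``distinguished generator is nontrivial'' is a fact about $A$ together with the running hypothesis on $f$, not an extra condition $G$ is being asked to certify.
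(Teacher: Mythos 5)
Your overall strategy is the same as the paper's — reduce to Lemma~\ref{ranks}, and use commutative-transitivity to pass from the subgroup $A_e$ to a single nontrivial commuting element so that $\cz_{r_e}$ applies. However, there is a genuine gap in how you implement the passage to a single element, and you in fact flag your own unease and then ``resolve'' it with two false implications.

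The claim ``under the stated hypothesis $f(Z_A(A_e))\neq\{1\}$ one has $f(A_e)\neq\{1\}$ hence $f(c_e^{(1)}(\ua))\neq 1$'' fails twice over. First, since $A_e\leq Z_A(A_e)$, the containment of images goes $f(A_e)\leq f(Z_A(A_e))$, so $f(Z_A(A_e))\neq\{1\}$ does \emph{not} imply $f(A_e)\neq\{1\}$; the implication you need is backwards. Second, even granting $f(A_e)\neq\{1\}$, the particular chosen generator $c_e^{(1)}$ may lie in $\ker f$ when $A_e$ is not cyclic — and in the JSJ-like splittings used here edge groups are allowed to be non-cyclic abelian (e.g.\ isomorphic to $\mathbb{Z}^2$), which is precisely the situation relevant to toral relatively hyperbolic groups. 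Concretely: if $A_e=\langle c_1,c_2\rangle\cong\mathbb{Z}^2$, $c_e^{(1)}=c_1$, and $f(c_1)=1$ but $f(c_2)\neq 1$ with $\rank(Z_G(f(A_e)))=r_e$, then your $\epsilon_e(\us)$ is false while the rank equality is true, so the biconditional in the lemma is violated. The paper avoids this by fixing a basis $\{b_{e,i}\}_{i=1}^{r_e}$ of $Z_A(A_e)$ and taking $\epsilon_e(\ux)$ to be the conjunction of $\bigvee_i b_{e,i}(\ux)\neq 1$ with $\bigwedge_i\bigl((b_{e,i}(\ux)\neq 1)\Rightarrow\cz_{r_e}(b_{e,i}(\ux))\bigr)$: the disjunction guarantees some surviving generator is found, and the conditional clauses ensure only surviving generators are tested, so the formula works regardless of which generators $f$ kills. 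Replacing your single unconditional clause with that disjunction-plus-conditionals pattern repairs the argument; the rest of your reasoning (commutative-transitivity giving $Z_G(f(b))=Z_G(f(A_e))$ for any surviving commuting element $b$, and the appeal to Lemma~\ref{ranks}) is correct.
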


\begin{proof}
For each $e\in E(\Lambda)$ let $r_e=\rank(Z_A(A_e))$ and choose a basis
$\{b_{e,i}=b_{e,i}(\ua)\}_{i=1}^{r_e}$ for $Z_A(A_e)$. Since $E(\Lambda)$ is finite by Lemma \ref{ranks} we can define
\begin{align*}
\epsilon_e(\ux)\ &:\ \left(\bigvee_{i=1}^{r_e}b_{e,i}(\ux)\neq 1\right)\wedge\left(\bigwedge_{i=1}^{r_e}\left((b_{e,i}(\ux)\neq 1)\implies\cz_{r_e}(b_{e,i}(\ux))\right)\right),\\
\epsilon_\Lambda(\ux)\ &:\ \bigwedge_{e\in E(\Lambda)}\epsilon_e(\ux).
\end{align*}
Then $f(b_{e,i})=b_{e,i}(\us)$, so $G\models\epsilon_e(\us)$ if and only if $f(Z_A(A_e))\neq\{1\}$ and
\[\rank(Z_G(f(A_e)))=\rank(Z_G(f(b_{e,i})))=r_e=\rank(Z_A(A_e)),\]
so the result follows.
\end{proof}

\mainpropa

\begin{proof}
Let $\Lambda$ be the JSJ splitting of $A$ relative to $H$, let $A=\lang\ua|\Sigma(\ua)=1\rang$ be a finite presentation for $A$, and let $H_0=\{h_1,\ldots,h_n\}\subseteq H$ and $K_0=\{k_1,\ldots,k_m\}\subseteq A$ as in Proposition \ref{prop5.16} with $h_i=h_i(\ua)$ and $k_j=k_j(\ua)$, where we are concerned with maps $A\to H$ which satisfy $*_{\lang H_0\rang,A,H}$.

Define the first-order formula
\begin{align*}
\ph(z_1,\ldots,z_n)\ :\ 
	&\forall\ux\exists \uy\ \left((\Sigma(\ux)=1)\wedge\epsilon_{\Lambda}(\ux)\wedge\left(\bigwedge_{i=1}^n z_i=h_i(\ux)\right)\right)\\
	&\implies \left((\Sigma(\uy)=1)\wedge\rho_{\Lambda}(\ux,\uy)\wedge\left(\bigvee_{i=1}^m k_i(\uy)=1\right)\right)
\end{align*}

Then by Lemmas \ref{lambdarel} and \ref{epsilon}, $\ph(h_1,\ldots,h_n)$ can be interpreted as meaning that, for any morphism $f:A\to H$ given by $\ua\mapsto\ux$ which satisfies $*_{\lang H_0\rang,A,H}$, there exists a $\Lambda$-related morphism $f':A\to H$ given by $\ua\mapsto\uy$ such that $f'(k_i)=1$ for some $i$. Then $H\models\ph(h_1,\ldots,h_n)$ by Proposition \ref{prop5.16}, so because $H\elemb G$ we have $G\models\ph(h_1,\ldots,h_n)$. Interpreted over $G$, this implies that because the inclusion $A\emb G$ satisfies $*_{\lang H_0\rang,A,G}$, there exists a non-injective morphism $p:A\to G$ which is $\Lambda$-related to the inclusion $A\emb G$. Then $p:A\sqto G$ is a non-injective preretraction with respect to $\Lambda$.
\end{proof}

\mainpropb

\begin{proof}
Let $\Lambda$ be the JSJ splitting of $C$, let $C=\lang\uc|\Sigma(\uc)=1\rang$ be a finite presentation for $C$, let $\{\uu\} = \{u_1,\ldots,u_n\}\subseteq\Hom(C,H)$ be the finite set of embeddings as in Proposition \ref{hypembs}, and choose $K_0=\{k_1,\ldots,k_m\}\subseteq C$ to consist of a single non-trivial element from the kernel of each of the proper quotients of Proposition \ref{sq2} with $k_j=k_j(\uc)$, where in both cases we are concerned with maps $C\to H$ which satisfy $*_{\{1\},C,H}$.

Define the first-order formula with constants $u_i(\uc)\in H$
\begin{align*}
\ph(\ux)\ :\ 
	&((\Sigma(\ux)=1)\wedge \epsilon_\Lambda(\ux))\\
	&\wedge\forall\uy\ \left(((\Sigma(\uy)=1)\wedge \rho_\Lambda(\ux,\uy))\implies\left(\bigwedge_{i=1}^n\uy\neq u_i(\uc)\right)\right)
\end{align*}

Then $H\models\ph(\ux)$ if and only if $\uc\mapsto\ux$ defines a morphism $f:C\to H$ which satisfies
\[\rank(Z_A(A_e))=\rank(Z_H(f(A_e)))\]
for all $e\in E(\Lambda)$ and is not $\Lambda$-related to any of the embeddings $u_i$. By Proposition \ref{hypembs}, this is sufficient to ensure that morphisms satisfying $*_{\{1\},C,H}$ are non-injective, i.e., if $H\models\ph(\ux)$ then $\uc\mapsto\ux$ defines a non-injective morphism $C\to H$.

Define the first-order sentence with constants in $H$
\begin{align*}
\psi\ :\ 
	&\forall \ux\ \ph(\ux)\\
	&\implies\exists\uy\ \left((\Sigma(\uy)=1)\wedge\rho_{\Lambda}(\ux,\uy)\wedge\left(\bigvee_{i=1}^m k_i(\uy)=1\right)\right)
\end{align*}

Then $H\models\psi$ because any non-injective morphism $C\to H$ given by $\uc\mapsto\ux$ and satisfying $*_{\{1\},C,H}$ is $\Lambda$-related to a morphism $\uc\mapsto\uy$ with some $k_i$ in its kernel by Proposition \ref{sq2}. By the elementary embedding we have $G\models\psi$.

Because $C$ is neither cyclic nor a non-exceptional surface group, $\Lambda$ has at least one non-surface-type vertex subgroup $B$. Any $f:C\to G$ which is $\Lambda$-related to the embedding $C\emb G$ given by $\uc\mapsto\uc$ restricts to conjugation on $B$. Then $f(B)$ cannot be in $H$ by hypothesis, so the tuple $f(\uc)\neq u_i(\uc)$ for all $i$ and hence $G\models\ph(\uc)$. Then there exists a non-injective morphism $p:C\to G$ which is $\Lambda$-related to the inclusion $C\emb G$, so $p:C\sqto G$ is a non-injective preretraction with respect to $\Lambda$.
\end{proof}

\bibliographystyle{plain}
\bibliography{Elementary_Embeddings.bib}

\noindent
---\\
Christopher Perez\\
Loyola University New Orleans\\
\href{mailto:caperez@loyno.edu}{\texttt{caperez@loyno.edu}}

\end{document}